\documentclass[11pt]{amsart}
\usepackage{amsmath}    
\usepackage{amsthm}     
\usepackage{amssymb}    
\usepackage{comment}

\usepackage{mathtools}
\usepackage{amsthm}
\usepackage{mathrsfs}
\usepackage{amsfonts}
\usepackage{amstext}
\usepackage{amsmath}
\usepackage{amssymb}
\usepackage{color,xcolor}
\usepackage{bm}
\usepackage{bbm}
\usepackage{graphicx}
\usepackage{xcolor}
\usepackage{comment}

\newtheorem{thm}{Theorem}[section]
\newtheorem{cor}[thm]{Corollary}
\newtheorem{lem}[thm]{Lemma}
\newtheorem{prop}[thm]{Proposition}
\theoremstyle{definition}

\newtheorem{rem}[thm]{Remark}

\numberwithin{equation}{section}

\begin{document}
	
	\title{Ascent and descent of bounded linear operators}
	
	\author{ Xiaofei Qi, Xiaoshuo Feng, Bing Xu  Jinchuan Hou}
	
	\address[Qi]{School of Mathematics and Statistics, Shanxi University, Taiyuan 030006, China; Key Laboratory of Complex Systems and Data Science of Ministry of Education, Shanxi University, Taiyuan 030006,  China}
	
	\address[Feng]{School of Mathematics and Statistics, Shanxi University, Taiyuan 030006,  China}
	
	\address[Xu]{School of Mathematics and Statistics, Shanxi University, Taiyuan 030006,  China}
	
	\address[Hou]{College of Mathematics, Taiyuan University of
		Technology, Taiyuan 030024,  China}

	\email{xiaofeiqisxu@aliyun.com; xiaoshuofeng0919@163.com; 202212211015@email.sxu.edu.cn; jinchuanhou@aliyun.com  }
	
     \thanks{{\it MR(2020) Subject Classification.} 47A53, 47B49}

	\thanks{{\it Key words and phrases.} Linear operators on Banach spaces, ascent and descent, Jordan products, general preservers}

	\begin{abstract}
		Let $\mathcal B(\mathcal X)$ be the algebra of all bounded linear operators on a real or complex Banach space  $\mathcal{X}$ with $\dim\mathcal X \ge 3$. In this paper, we first explore the ascent (descent) of upper triangular block operator matrices and certain special algebraic operators, and then establish characterizations for the ascent (descent) of rank-one and rank-two operators. Based on these results, we  characterize  features for some special operators by the ascent (descent) of Jordan products. As an application, we give the structure of  all  maps with range containing all bounded operators of rank at most three preserving the ascent (descent) of operator Jordan product  on $\mathcal B(\mathcal X)$.
			\end{abstract}
	\maketitle

\section{Introduction}

Let $\mathcal X$ be any Banach space over the real or complex field $\mathbb F$ and $\mathcal B(\mathcal X)$  the algebra of all bounded linear operators on  $\mathcal X$. 
For any operator $T \in \mathcal B(\mathcal X)$, $\operatorname{ker}(T)$ and $\operatorname{ran}(T)$ stand for the kernel and the range of $T$, respectively. Recall that the ascent and the descent of $T$ are defined  respectively by \cite{FR1918}
$$\operatorname{asc}(T) = \inf\{ n \in \mathbb{N} \cup \{0\} \colon  \operatorname{ker}(T^n) = \operatorname{ker}(T^{n+1}) \} $$and
$$\operatorname{desc}(T) = \inf\{ n \in \mathbb{N} \cup \{0\} \colon  \operatorname{ran}(T^n) = \operatorname{ran}(T^{n+1}) \}. $$
Here, the infimum over the empty set is taken to be infinite (\cite{DCL1980,VM2007}).
The ascent and the descent   play a decisive role in the spectral theory and are the basic bricks in the construction of   Fredholm operator theory (\cite{PA2004, SG2002, DCL1970,  DCL1986}).
The classical Riesz--Schauder theory asserts \cite{HH1982} that $\operatorname{asc}(\lambda I-K)=\operatorname{desc}(\lambda I-K)<\infty$ for every compact operator $K\in\mathcal B(\mathcal X)$ and every nonzero complex number $\lambda$. Also	
recall that an operator $A\in\mathcal B(\mathcal X)$ is of rank one if $\dim {\rm ran}(A)=1$. Every rank one operator can be written as $A=x\otimes f$, defined by $(x\otimes f)y = f(y)x$ for any  $y\in\mathcal X$, for some vector $x\in \mathcal X$ and bounded linear functional $f\in\mathcal X^*$. Clearly,
$A=x\otimes f$ is nilpotent if and only if $A^2=0$, and in turn, if and only if $f(x)=\langle x,f\rangle=0$. Denote by $\mathcal F_1(\mathcal X)$ the set of all rank one operators on $\mathcal X$.

Over the last years, there has been  considerable interest in discussing properties of operators involving the ascent and  the descent.
Many properties of bounded linear operators can be  characterized by help of the ascent and the descent. The following well-known properties concerning ascent and decent are useful.

\begin{prop}[\cite{RH2023, VM2007, AET1966}] \label{p1}
	Assume that $A \in \mathcal B(\mathcal X)$. Then the following assertions hold.

	{\rm (1)} $\operatorname{asc}(kA) =\operatorname{asc}(A)$ and $\operatorname{desc}(kA) =\operatorname{desc}(A)$ hold for all $k \in \mathbb F^*=\mathbb F\setminus \{0\}$.
	
	{\rm (2)}  $\operatorname{asc}(A) =\operatorname{asc}(T^{-1}AT)$ and $\operatorname{desc}(A) =\operatorname{desc}(T^{-1}AT)$ hold for all invertible operators $T\in\mathcal B(\mathcal X)$.

	{\rm (3)} $A$  is injective   $\Leftrightarrow \operatorname{asc}(A) = 0 \Leftrightarrow \operatorname{asc}(A^2) = 0$; $A$ is surjective $\Leftrightarrow \operatorname {desc}(A)=0\Leftrightarrow  \operatorname {desc}(A^2)=0$.

	{\rm (4)} If $A = A_1 \oplus A_2$, then
	$\operatorname{asc}(A) = \max\{\operatorname{asc}(A_1), \operatorname{asc}(A_2)\}$ and $\operatorname{desc}(A) = \max\{\operatorname{desc}(A_1), \operatorname{desc}(A_2)\}$.
	
	{\rm (5)} If $\operatorname{asc}(A) < \infty$ and $\operatorname{desc}(A) < \infty$, then $\operatorname{asc}(A) = \operatorname{desc}(A)$ and $\mathcal X$ has a space decomposition  $\mathcal X=\operatorname {ker}(T^{\operatorname{asc}(T)})\oplus\operatorname{ran}(T^{\operatorname{asc}(T)})$.
	
	{\rm (6)} If $A$ is a $k$-nilpotent operator, then $\operatorname{asc}(A) = \operatorname{desc}(A)= k$.
	
	{\rm (7)} If  $A = x \otimes f $ is of rank one, then $\operatorname {asc}(A)=\operatorname{desc}(A)  \in\{1,2\}$;  and moreover,  $A \in \mathcal N_1(\mathcal{X}) \Leftrightarrow \operatorname{asc}(A) =2\Leftrightarrow\operatorname{desc}(A)= 2$.
	
	{\rm (8)} If $\operatorname{ran}(A)$ is closed,  then $\operatorname{asc}(A) = \operatorname{desc}(A^*)$ and $\operatorname{desc}(A) = \operatorname{asc}(A^*)$.
\end{prop}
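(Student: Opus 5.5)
This proposition collects standard facts, so I will verify the eight items directly from the definitions. The basic tool is the monotonicity of the chains $\ker(T^n)\subseteq\ker(T^{n+1})$ and $\operatorname{ran}(T^{n+1})\subseteq\operatorname{ran}(T^n)$. Together with the stabilization lemma: if $\ker(T^p)=\ker(T^{p+1})$, then $\ker(T^n)=\ker(T^p)$ for all $n\ge p$, and dually for ranges.

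For the lemma on kernels, take $x\in\ker(T^{n+2})$. Then $Tx\in\ker(T^{n+1})=\ker(T^n)$, so $x\in\ker(T^{n+1})$, and induction finishes it. For ranges, $\operatorname{ran}(T^{n+1})=T^{n-p}\operatorname{ran}(T^{p+1})=T^{n-p}\operatorname{ran}(T^p)=\operatorname{ran}(T^n)$.

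\textbf{Items (1)--(4).}
\begin{itemize}
\item For (1), note $\ker((kA)^n)=\ker(A^n)$ and $\operatorname{ran}((kA)^n)=\operatorname{ran}(A^n)$.
\item For (2), note $\ker((T^{-1}AT)^n)=T^{-1}\ker(A^n)$ and $\operatorname{ran}((T^{-1}AT)^n)=T^{-1}\operatorname{ran}(A^n)$. Since $T^{-1}$ is a bijection, equalities of consecutive terms are preserved.
\item For (3), $\operatorname{asc}(A)=0$ means $\ker(A)=\{0\}$, and $A^2$ is injective exactly when $A$ is. Dually, $\operatorname{desc}(A)=0$ means $\operatorname{ran}(A)=\mathcal X$, and $A^2$ is surjective iff $A$ is: if $A^2$ is onto then $A$ is onto, and conversely $\operatorname{ran}(A^2)=A(\mathcal X)=\mathcal X$.
\item For (4), $\ker(A^n)=\ker(A_1^n)\oplus\ker(A_2^n)$ and similarly for ranges. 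So consecutive terms agree iff they agree in both summands, and by the stabilization lemma the first such $n$ is the maximum.
\end{itemize}

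\textbf{Item (5)} is the main obstacle: it is the classical Riesz decomposition, which I would reproduce as follows. Let $p=\operatorname{asc}(A)$ and $q=\operatorname{desc}(A)$. (The statement writes $T$ for $A$.)
\begin{itemize}
\item First show $\ker(A^p)\cap\operatorname{ran}(A^n)=\{0\}$ for all $n\ge p$. If $y=A^nx$ and $A^py=0$, then $x\in\ker(A^{n+p})=\ker(A^n)$, so $y=0$.
\item Next show $\ker(A^n)+\operatorname{ran}(A^q)=\mathcal X$ for $n\ge q$. Given $x$, we have $A^qx\in\operatorname{ran}(A^q)=\operatorname{ran}(A^{2q})$, so $A^qx=A^{2q}z$. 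Then $x=(x-A^qz)+A^qz$ with $x-A^qz\in\ker(A^q)$.
\item To get $p=q$: if $p<q$, take $x\in\operatorname{ran}(A^{q-1})$. Write $x=u+v$ with $u\in\ker(A^{p})$ and $v\in\operatorname{ran}(A^{q})$, using the sum decomposition with $n=\max(p,q)$ and $\ker$ stable. Then $u=x-v\in\ker(A^p)\cap\operatorname{ran}(A^{q-1})=\{0\}$ because $q-1\ge p$. Hence $\operatorname{ran}(A^{q-1})\subseteq\operatorname{ran}(A^q)$, contradicting the minimality of $q$.
\item Symmetrically, if $q<p$, take $x\in\ker(A^p)$ and write $x=u+A^qw$ with $u\in\ker(A^q)$. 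Since $A^px=0$ and $A^pu=0$, we get $A^{p+q}w=0$, so $A^qw\in\ker(A^p)\cap\operatorname{ran}(A^q)$. Here $\operatorname{ran}(A^q)=\operatorname{ran}(A^p)$ as $p>q$. Therefore $A^qw=0$ and $x\in\ker(A^q)$, contradicting the minimality of $p$.
\item With $p=q$, the two facts give the algebraic direct sum. Both summands are closed: $\ker$ is closed, and $\operatorname{ran}(A^p)$ is closed since it is an algebraic complement of a closed subspace that is the range of a bounded operator. This is a standard Banach-space fact via the open mapping theorem.
\end{itemize}

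\textbf{Items (6)--(8).}
\begin{itemize}
\item For (6), $k$-nilpotent means $A^k=0\ne A^{k-1}$. Then $\ker(A^n)$ strictly increases for $n<k$ by the stabilization lemma, since $\ker(A^k)=\mathcal X\ne\ker(A^{k-1})$. Likewise $\operatorname{ran}(A^n)$ strictly decreases down to $\{0\}$. So both indices equal $k$.
\item For (7), $A^2=f(x)A$. If $f(x)\ne0$, then $A^2$ is a nonzero multiple of $A$, so kernels and ranges stabilize at step $1$. Step $0$ fails since $\dim\mathcal X\ge3$, so $A$ is neither injective nor surjective. If $f(x)=0$, apply (6) with $k=2$.
\item For (8), use the closed range theorem: $\ker(A^{*n})=\operatorname{ran}(A^n)^\perp$, and $\operatorname{ran}(A^n)$ is closed… Here the catch is that closedness of $\operatorname{ran}(A)$ need not pass to powers. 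I would therefore rely on the cited references' hypothesis. Alternatively, I would restrict to the case where all powers have closed range, arguing via annihilators that the chains correspond: $\ker(A^{*n})=\ker(A^{*n+1})$ iff $\operatorname{ran}(A^n)^\perp=\operatorname{ran}(A^{n+1})^\perp$ iff the closed ranges coincide. Dually, $\operatorname{ran}(A^{*n})=\ker(A^n)^\perp$.
\end{itemize}
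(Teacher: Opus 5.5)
Your treatment of items (1)--(7) is correct; the paper gives no proof of this proposition and only cites the literature. The stabilization lemma, the two Riesz-type facts and the $p<q$ / $q<p$ contradiction in (5), and the closedness of $\operatorname{ran}(A^p)$ via the open mapping theorem are all sound.

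The genuine gap is item (8), and your suspicion there is justified more strongly than you say. Under the sole hypothesis that $\operatorname{ran}(A)$ is closed, (8) is \emph{false}, so it cannot be proved as stated, and ``relying on the references'' does not close the gap. Take $K=\operatorname{diag}(1,1/2,1/3,\dots)$ on $\ell^2$ and define $A$ on $\ell^2\oplus\ell^2$ by $A(x,y)=(Kx,x)$.
Then $\operatorname{ran}(A)=\{(Kx,x):x\in\ell^2\}$ is closed and proper. Since $A^2(x,y)=(K^2x,Kx)$ with $K$ injective, $\ker(A)=\ker(A^2)=\{0\}\oplus\ell^2\neq\{0\}$, so $\operatorname{asc}(A)=1$.
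However, $\operatorname{ran}(A^2)=\{(Kz,z):z\in\operatorname{ran}K\}$ is the image of the dense, non-closed subspace $\operatorname{ran}K$ under the isomorphism $z\mapsto(Kz,z)$ of $\ell^2$ onto $\operatorname{ran}(A)$. Hence it is not closed, but it is dense in $\operatorname{ran}(A)$.
This gives $\operatorname{desc}(A)\ge 2$, whereas $\ker(A^{*2})=\operatorname{ran}(A^2)^\perp=\operatorname{ran}(A)^\perp=\ker(A^*)\neq\{0\}$ gives $\operatorname{asc}(A^*)=1$.
Likewise, by the closed range theorem $\operatorname{ran}(A^*)=\ker(A)^\perp$ is closed and proper while $\operatorname{ran}(A^{*2})$ is not closed, so $\operatorname{desc}(A^*)\ge 2>\operatorname{asc}(A)$. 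Both equalities in (8) fail.

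What is true, and what you should state and prove instead, is the following. Without any hypothesis, $\operatorname{asc}(A^*)\le\operatorname{desc}(A)$ and $\operatorname{asc}(A)\le\operatorname{desc}(A^*)$. This follows from $\ker(A^{*n})=\operatorname{ran}(A^n)^\perp$ and $\ker(A^n)=\{x: g(x)=0 \text{ for all } g\in\operatorname{ran}(A^{*n})\}$.
If $\operatorname{ran}(A^n)$ is closed for \emph{every} $n$, then also $\operatorname{ran}(A^{*n})=\ker(A^n)^\perp$. Since closed subspaces of $\mathcal X$ with the same annihilator coincide (Hahn--Banach), your annihilator argument then gives both equalities.
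This corrected version is all the paper needs. In Section 5, (8) is only invoked to obtain $\operatorname{asc}(F)=\operatorname{asc}(F^*)$ for finite-rank $F$, all of whose powers have finite rank and therefore closed range.
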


One of our purpose in the present paper is to further investigate the properties associated with the ascent (and descent) of bounded linear operators acting on Banach spaces. Specifically, we first explore the ascent (descent) of upper triangular block operator matrices and certain special algebraic operators  (Section 2). Subsequently, we establish characterizations for the ascent (descent) of rank-one and rank-two operators (Section 3). Building on these results, we proceed to characterize  features for some special operators by the ascent (descent) of Jordan products (Section 4).

It is also a topic of considerable interest to study  the  related preservers about the ascent (descent) on operator algebras recently.
Assume that  $ \mathcal A$, $\mathcal B$ are any two operator algebras and  $\diamond$ is some product of elements $A,B$ in $\mathcal A$. We say that a map $ \phi : \mathcal A \to \mathcal B $   preserves the ascent (decent) of $A\diamond B$  if $\phi$ satisfies $\operatorname{asc}(A\diamond B)=\operatorname{asc}(\phi(A)\diamond\phi(B))$  ($\operatorname{desc}(A\diamond B)=\operatorname{desc}(\phi(A)\diamond\phi(B))$) for all  $A,B \in \mathcal A$. Bendaoud and Sarih in \cite{MB2009} showed that a surjective additive map $\phi $ from $ \mathcal B(\mathcal X)$ onto $ \mathcal B(\mathcal Y)$  (here, $\mathcal X$ and $\mathcal Y$ are infinite-dimensional complex Banach spaces) satisfies $\operatorname{asc}(A) = \operatorname{asc}(\phi(A))$ (resp. $\operatorname{desc}(A) = \operatorname{desc}(\phi(A))$) for all $A\in\mathcal B(\mathcal X)$ if and only if $\phi$ is a scalar multiple  of an isomorphism.
Hosseinzadeh and Petek discussed  all  surjective maps $\phi$ on $ \mathcal B(\mathcal X)$ with $ \dim\mathcal X\geq 3 $ preserving the ascent (descent) of operator product $AB$ and preserving the ascent (descent) of Jordan semi-triple product $ABA$ in \cite{RH2023, RH2024},  respectively, and obtained the concrete form of such $\phi$.   Recently, Marzouki and Souilash in \cite{RM2026}  refined   the result in \cite{RH2023} for  the infinite-dimensional complex Banach space case  by weakening the surjectivity of  $\phi$.  For other related results, the reader can see  \cite{MM2014, MMV2014, MM2017,  MO2019, MO2020,  WJS2020} and the references therein. These results reveal that the ascent as well as decent have some rigid features that determine the algebraic structure of the operator algebra $\mathcal B(\mathcal X)$.

As an application of the results in Sections 2-4, another purpose of the present paper is to characterize the general preservers $\phi$ on $\mathcal B(\mathcal X)$  preserving the ascent (descent) of operator Jordan products, that is, those $\phi$ satisfying asc$(\phi(A)\phi(B)+\phi(B)\phi(A))=$asc$(AB+BA)$ for all $A,B\in\mathcal B(\mathcal X)$. This will be discussed in Section 5.

At the end of this section, we fix some more notions and notations.  In this paper, if there is no special illustration,  $\mathcal{X}$ is any Banach space over the  real or complex field $\mathbb F$ with $\dim\mathcal X\geq 3$ and $\mathcal X^*$ is its  dual space. \if false Denote by $\mathcal B(\mathcal X)$ the algebra of all bounded linear operators on $\mathcal X$.
For $A \in \mathcal B(\mathcal X)$, $\operatorname{ker}(A)$ and
$\operatorname{ran}(A)$   stand for the kernel and  the range of $A$, respectively.   For every non-zero vector $x\in\mathcal X$ and non-zero functional $f\in\mathcal X^*$, the symbol $x\otimes f$ stands for the rank-one  operator on $\mathcal X$ defined by $(x\otimes f)y = f(y)x$ for any  $y\in\mathcal X$. \fi
Denote respectively by $\mathcal P_1(\mathcal{X})$, $\mathcal{N}_1(\mathcal{X})$ and $\mathcal F_1(\mathcal X)$ the set of all rank-one idempotent operators, the set of all rank-one nilpotent operators and the set of all rank-one operators in $\mathcal B(\mathcal X)$.   	For $k>1$, denote by $\mathcal F_n(\mathcal X)$ the set of all finite rank operators in $\mathcal B(\mathcal X)$  with rank not greater than $n$. If $x \in \mathcal X$, $[x]$ denotes the linear subspace spanned by $x$.  For a subspace $\mathcal M \subseteq \mathcal X$,   $\mathcal M^\perp  = \{ f \in \mathcal X^*: f(x) = 0  \text{ for all }  x \in \mathcal M \}$. If   $\operatorname{dim}\mathcal X=n < \infty$, then $\mathcal X$ is identified with $\mathbb F^n$,  and $\mathcal{B}(\mathcal{X})$ is identified with $\mathcal M_n(\mathbb F)$. Let $\{e_i\}_{i=1}^{n}$ be an orthonormal basis of $\mathbb F^n$ and $A \in \mathcal M_n(\mathbb F)$. With respect to this basis, $A$ has a matrix representation $A = [a_{ij}]$ with $a_{ij} = \langle Ae_j, e_i \rangle$.
$A^{\rm tr}=[a_{ji}]$ is the transpose of  $A$. Let $E_{ij} \in \mathcal M_n(\mathbb F)$ be the matrix with $(i, j)$-th entry  1 and all other entries  0.  $J_k(\lambda) \in \mathcal M_k(\mathbb F)$ denotes  the Jordan block  matrix given by  $J_k(\lambda) = \lambda I_k + N_k$, where $\lambda \in \mathbb{F} $, $I_k$  is the  $k \times k$  identity matrix and  $N_k = \sum_{r=1}^{k-1} E_{r,r+1}$.
$\operatorname{diag}(a_1, a_2, \dots, a_n)$ denotes a diagonal matrix with   main diagonal  entries $a_1, a_2, \dots, a_n$ and $\operatorname{diag}\{A_1, A_2, \dots, A_n\}$ denotes a block diagonal matrix with  square matrices $A_1, A_2, \dots, A_n$ on its main diagonal entries.

\section{ Ascent  and descent of algebraic  operators and  triangular block  matrix operators}

In this section, we  discuss the ascent  (descent) of   upper triangular block  matrix operators and  algebraic operators.

The following results are devoted to studying the relationship between the ascent of an upper triangular block operator matrix and the ascent of its diagonal blocks.

\begin{prop} \label{op ma}  Let $\mathcal X, \mathcal Y$ be two Banach spaces, $S =\begin{pmatrix}
		A&B\\
		0&C
	\end{pmatrix}\in{\mathcal B}(\mathcal X\oplus\mathcal Y)$ with $A \in \mathcal B(\mathcal X)$, $B\in \mathcal B(\mathcal Y, \mathcal X)$ and $C \in \mathcal B(\mathcal Y)$. If $\operatorname{asc}(A) = n$, then $\operatorname{asc}(S) \ge n$; and moreover, if $C=0$, then $n \leq \operatorname{asc}(S) \leq n+1 $.
\end{prop}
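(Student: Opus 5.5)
The plan is to compute the powers of $S$ explicitly and compare kernels. Write
$$S^k=\begin{pmatrix}A^k&B_k\\0&C^k\end{pmatrix},\qquad B_k=\sum_{j=0}^{k-1}A^{j}BC^{k-1-j}.$$
For $x\in\mathcal X$ we then have $S^k(x\oplus 0)=A^kx\oplus 0$. Hence $\ker(S^k)\cap(\mathcal X\oplus 0)=\ker(A^k)\oplus 0$ for every $k\ge 0$.

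For the first assertion, assume $\operatorname{asc}(S)=m<n$ and derive a contradiction. Then $\ker(S^m)=\ker(S^{m+1})$. Intersecting both sides with $\mathcal X\oplus 0$ gives $\ker(A^m)=\ker(A^{m+1})$, so $\operatorname{asc}(A)\le m<n$, which is impossible. The case $n=\infty$ is covered by the same argument: if $\operatorname{asc}(S)$ were finite, then $\operatorname{asc}(A)$ would be finite. This gives $\operatorname{asc}(S)\ge n$.

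For the second assertion, set $C=0$. The formula simplifies to $B_k=A^{k-1}B$, so for $k\ge1$
$$S^k=\begin{pmatrix}A^k&A^{k-1}B\\0&0\end{pmatrix}.$$
Only the case $n<\infty$ needs treatment. It suffices to show $\ker(S^{n+2})\subseteq\ker(S^{n+1})$, since the reverse inclusion always holds. Take $x\oplus y\in\ker(S^{n+2})$, that is, $A^{n+2}x+A^{n+1}By=0$. This says $A(x)+By\in\ker(A^{n+1})$. Because $\operatorname{asc}(A)=n$, we have $\ker(A^{n+1})=\ker(A^{n})$, so $A^{n}(Ax+By)=0$. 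That is exactly $S^{n+1}(x\oplus y)=0$. Therefore $\operatorname{asc}(S)\le n+1$.

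The only real care needed is index bookkeeping. The formula for $S^k$ holds for $k\ge1$, and $\ker(A^{j})$ stabilizes for all $j\ge n$. We apply this stabilization with $j=n+1$, which is legitimate even when $n=0$, because then $\ker(A)=\ker(A^0)=0$. No earlier result from the paper is needed beyond the definition of ascent.
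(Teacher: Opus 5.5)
Your proof is correct and follows essentially the paper's route: the case $C=0$ is the same computation (from $A^{n+1}(Ax+By)=0$ and $\ker(A^{n+1})=\ker(A^n)$ you conclude $S^{n+1}(x\oplus y)=0$), and the bound $\operatorname{asc}(S)\ge n$ rests on the same observation that $S^k$ acts as $A^k$ on $\mathcal X\oplus 0$. The only difference is that you intersect kernels with $\mathcal X\oplus 0$ instead of exhibiting a vector $x$ with $A^{n-1}x\neq 0=A^nx$. This handles $n=0$ and $n=\infty$ cleanly, and it avoids the tacit use of the fact that kernels of powers stay equal once they stabilize.
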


\begin{proof}
	As $\operatorname{asc}(A) = n$, one has $\operatorname{ker}(A^{n - 1}) \subsetneqq \operatorname{ker}(A^n) = \operatorname{ker}(A^{n + 1})$. So, there exists  some $x \in \mathcal X$ such that $A^{n - 1}x \neq 0$ but $A^nx= 0$.  By a direct computation, we have
	$${S^{n - 1}}\begin{pmatrix}
		x\\
		0
	\end{pmatrix}= \begin{pmatrix}
		A^{n-1}&\sum_{i=0}^{n-2}A^{n-2-i}BC^i\\
		0&C^{n-1}
	\end{pmatrix}\begin{pmatrix}
		x\\
		0
	\end{pmatrix}=\begin{pmatrix}
		A^{n - 1}x\\
		0
	\end{pmatrix} \ne 0,$$
	but
	$${S^{n}}\begin{pmatrix}
		x\\
		0
	\end{pmatrix}= \begin{pmatrix}
		A^n&\sum_{i=0}^{n-1}A^{n-1-i}BC^i\\
		0&C^n
	\end{pmatrix}\begin{pmatrix}
		x\\
		0
	\end{pmatrix}=\begin{pmatrix}
		A^nx\\
		0
	\end{pmatrix} = 0.$$   Thus  $\operatorname{ker}(S^{n - 1})\subsetneqq \operatorname{ker}(S^n)$ and $\operatorname{asc}(S) \ge n$.
	
	Furthermore, if $C=0$, then for any $\begin{pmatrix}
		x\\
		y
	\end{pmatrix}\in \operatorname{ker}(S^{n+2})$ with  $x \in \mathcal X$ and $y \in \mathcal Y$, we have
	$$0=S^{n + 2}\begin{pmatrix}
		x\\
		y
	\end{pmatrix}= \begin{pmatrix}
		A^{n+2}&A^{n+1}B\\
		0&0
	\end{pmatrix}\begin{pmatrix}
		x\\
		y
	\end{pmatrix}=\begin{pmatrix}
		A^{n+1}(Ax+By)\\
		0
	\end{pmatrix},$$
	which implies  $Ax+By \in \operatorname{ker}(A^{n+1})$. On the other hand, as $\operatorname{asc}(A)= n $, one  gets $A^{n}(Ax+By)= A^{n+1}x+A^{n}By=0$, and hence
	$$S^{n + 1}\begin{pmatrix}
		x\\
		y
	\end{pmatrix}= \begin{pmatrix}
		{{A^{n+1}}}&{{A^{n}B}}\\
		0&0
	\end{pmatrix}\begin{pmatrix}
		x\\
		y
	\end{pmatrix}=\begin{pmatrix}
		A^{n+1}x+A^{n}By\\
		0
	\end{pmatrix} = 0.$$  It follows that  $\operatorname{asc}(S) \leq n+1 $.
\end{proof}

Similarly, for the descent of upper triangular operator matirx, we have

\begin{prop} \label{op ma}  Let $\mathcal X, \mathcal Y$ be any Banach spaces, $S =\begin{pmatrix}
		A&B\\
		0&C
	\end{pmatrix}\in{\mathcal B}(\mathcal X\oplus\mathcal Y)$ with $A \in \mathcal B(\mathcal X)$, $B\in \mathcal B(\mathcal Y, \mathcal X)$ and $C \in \mathcal B(\mathcal Y)$. If $\operatorname{desc}(A) = n$, then $\operatorname{desc}(S) \ge n$;  moreover, if $C=0$, then $n \leq \operatorname{desc}(S) \leq n+1 $.
\end{prop}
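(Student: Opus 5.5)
I would begin with the range formulas for the powers of $S$. For $k\ge1$, $S^k=\begin{pmatrix}A^k&D_k\\0&C^k\end{pmatrix}$ with $D_k=\sum_{i=0}^{k-1}A^{k-1-i}BC^i$. By analogy with the ascent version, the natural plan for the first assertion is to take $x$ with $A^{n-1}x\notin\operatorname{ran}(A^n)$, show that $(A^{n-1}x,0)^{\rm tr}=S^{n-1}(x,0)^{\rm tr}$ does not lie in $\operatorname{ran}(S^n)$, and conclude $\operatorname{desc}(S)\ge n$. The obstacle is that $(A^{n-1}x,0)^{\rm tr}=(A^nu+D_nv,\,C^nv)^{\rm tr}$ only forces $C^nv=0$. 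It does not force $D_nv\in\operatorname{ran}(A^n)$, so the off-diagonal term can supply exactly the missing vector.

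In fact the first assertion fails without further hypotheses. Take $\mathcal X=\mathbb F$, $A=0$, so $\operatorname{desc}(A)=1$. Take $\mathcal Y=\ell^2$, $C$ the backward shift, and $B\colon\mathcal Y\to\mathbb F$ given by $By=\langle y,e_1\rangle$. Then $S(x,y)^{\rm tr}=(By,Cy)^{\rm tr}$. Given $(\alpha,z)$, choose $y$ with $Cy=z$ and then adjust the $e_1$-coordinate of $y$ (which lies in $\ker C$) so that $By=\alpha$. Hence $S$ is surjective and $\operatorname{desc}(S)=0<1$. So I would prove only the statement with $C=0$, which is the part used later. (Mirroring the ascent proof would require a hypothesis such as $B=0$, or $C$ nilpotent with $\ker C^n$ killed by $D_n$.)

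For $C=0$ we have $S^k=\begin{pmatrix}A^k&A^{k-1}B\\0&0\end{pmatrix}$, so with $\mathcal M=\operatorname{ran}(A)+\operatorname{ran}(B)$,
$$\operatorname{ran}(S^k)=A^{k-1}\mathcal M\oplus 0\quad(k\ge1),\qquad \operatorname{ran}(A^k)\subseteq A^{k-1}\mathcal M\subseteq\operatorname{ran}(A^{k-1}).$$
I assume $\mathcal Y\ne0$, since otherwise $S=A$. Then $\operatorname{ran}(S)\subseteq\mathcal X\oplus0\ne\operatorname{ran}(S^0)$.

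\emph{Upper bound.} We have $A^n\mathcal M\subseteq\operatorname{ran}(A^n)=\operatorname{ran}(A^{n+2})\subseteq A^{n+1}\mathcal M\subseteq A^n\mathcal M$. Hence $\operatorname{ran}(S^{n+1})=\operatorname{ran}(S^{n+2})$ and $\operatorname{desc}(S)\le n+1$.

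\emph{Lower bound.} The case $n\le1$ is covered by the remark above, so let $n\ge2$ and suppose $A^{n-2}\mathcal M=A^{n-1}\mathcal M$. Applying powers of $A$ gives $A^{n-2}\mathcal M=A^m\mathcal M$ for all $m\ge n-2$. Taking $m\ge n$ yields
$$\operatorname{ran}(A^{n-1})\subseteq A^{n-2}\mathcal M=A^m\mathcal M\subseteq\operatorname{ran}(A^m)=\operatorname{ran}(A^n),$$
which contradicts $\operatorname{desc}(A)=n$. Therefore $\operatorname{ran}(S^{n-1})\ne\operatorname{ran}(S^n)$, and together with the monotonicity of the ranges this gives $n\le\operatorname{desc}(S)\le n+1$.
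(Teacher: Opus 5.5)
Your proposal is correct, and on the first assertion it is more careful than the paper. Your counterexample checks out. With $C$ the backward shift and $By=\langle y,e_1\rangle$, $S$ is onto, so $\operatorname{desc}(S)=0<1=\operatorname{desc}(A)$.

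The paper gives no proof of the descent version beyond ``Similarly''. That means it intends to push the witness $(x,0)^{\rm tr}$ through $S$ as in the ascent case. This breaks exactly at the term $D_nv$ you isolate. What the witness argument actually proves is $\operatorname{desc}(S)\ge n$ under the hypothesis $D_n(\ker C^n)\subseteq\operatorname{ran}(A^n)$, for example when $C$ is injective or $B=0$. Nilpotency of $C$ is not needed.

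The first assertion does hold when both spaces are finite-dimensional, since then $\operatorname{desc}=\operatorname{asc}$ and the ascent version applies. The paper only uses these triangular estimates for the ascent of finite-rank operators, so nothing later is affected.

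For $C=0$, your upper bound is the natural analogue of the paper's kernel computation. Your lower bound, however, takes a genuinely different route. The paper would inherit it from the (false) first assertion, whereas you derive it from the sandwich $\operatorname{ran}(A^k)\subseteq A^{k-1}\mathcal M\subseteq\operatorname{ran}(A^{k-1})$. This subspace-level argument is really necessary, because even with $C=0$ no witness of the form $(x,0)^{\rm tr}$ need exist. Take $\mathcal X=\mathbb F^2$, $A=J_2(0)$, $\mathcal Y=\mathbb F$ and $B(1)=e_2$. Then $S=J_3(0)$ in the obvious basis, and every $(Ax,0)^{\rm tr}$ lies in $\operatorname{ran}(S^2)=[e_1]\oplus 0$. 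Working with whole subspaces rather than single vectors is what makes your argument go through.

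One small completion: you treat only finite $n$. If $\operatorname{desc}(A)=\infty$, apply the same sandwich to $d=\operatorname{desc}(S)$, which is $\ge 1$. This gives $\operatorname{ran}(A^d)\subseteq A^{d-1}\mathcal M=A^{d+1}\mathcal M\subseteq\operatorname{ran}(A^{d+1})$, a contradiction, so $\operatorname{desc}(S)=\infty$ as well.
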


\begin{prop}[\cite{RA2013}]\label{D}
	Let $\mathcal Y$ and $\mathcal Z$ be two nontrivial closed subspaces such that $\mathcal X = \mathcal Y \oplus \mathcal Z$, and let $T \in \mathcal{B}(\mathcal X)$ have the operator matrix of the form
	\[
	T = \begin{pmatrix}
		A & B \\
		0 & C
	\end{pmatrix} \quad \mbox{\rm or}\quad T = \begin{pmatrix}
		A & 0 \\
		D & C
	\end{pmatrix},
	\]
	where $C$ is invertible. Then  $\operatorname{asc}(T)=\operatorname{desc}(T)=p < \infty $ if and only if  $\operatorname{asc}(A)=\operatorname{desc}(A)=p < \infty $.
\end{prop}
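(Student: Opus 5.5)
We would prove Proposition \ref{D} by reducing it to the direct-sum case, Proposition \ref{p1}(4), via a similarity that kills the off-diagonal block. The block-triangular form need not split, so instead we compute powers directly and use the invertibility of $C$ to control the second coordinate. We treat $T=\begin{pmatrix} A&B\\ 0&C\end{pmatrix}$ in detail. The lower triangular case follows by the symmetric argument, exchanging the roles of the two coordinates (or by reordering the decomposition as $\mathcal Z\oplus\mathcal Y$ and running the same computation with the invertible block in the first position).

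\textbf{Ascent.} We have
$$T^k=\begin{pmatrix} A^k & B_k\\ 0& C^k\end{pmatrix},\qquad B_k=\sum_{i=0}^{k-1}A^{k-1-i}BC^i .$$
Since $C^k$ is injective, $T^k(x,y)^{\rm tr}=0$ forces $y=0$, and then the condition reduces to $A^kx=0$. Hence $\ker(T^k)=\ker(A^k)\oplus 0$ for every $k$, and so $\operatorname{asc}(T)=\operatorname{asc}(A)$ exactly.

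\textbf{Descent.} This is the main obstacle: the range of $T^k$ is not a product of subspaces, because $B_k$ mixes the coordinates. Using the surjectivity of $C^k$, for any $x$ and any target $w\in\mathcal Z$ we can solve $C^ky=w$. This gives
$$\operatorname{ran}(T^k)=\{(A^kx+B_ky,\ C^ky)\}.$$
Parametrizing by $w=C^ky$, we get $\operatorname{ran}(T^k)=\{(A^kx+B_kC^{-k}w,\ w): x\in\mathcal Y,\ w\in\mathcal Z\}$, that is, the graph-type set
$$\operatorname{ran}(T^k)=\{(u+B_kC^{-k}w,\ w): u\in\operatorname{ran}(A^k),\ w\in\mathcal Z\}.$$
So $\operatorname{ran}(T^k)=\operatorname{ran}(T^{k+1})$ holds if and only if $\operatorname{ran}(A^k)=\operatorname{ran}(A^{k+1})$ and $(B_kC^{-k}-B_{k+1}C^{-k-1})\mathcal Z\subseteq\operatorname{ran}(A^{k+1})$. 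Note that $B_{k+1}=AB_k+BC^k$, which gives
$$B_kC^{-k}-B_{k+1}C^{-k-1}=(B_kC-AB_k-BC^k)C^{-k-1}.$$
This difference is not automatically in the range of $A$, so descent alone is not directly transferred. This is exactly why the statement pairs ascent with descent.

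\textbf{Finishing via the pair.} Suppose first that $\operatorname{asc}(A)=\operatorname{desc}(A)=p<\infty$. By Proposition \ref{p1}(5), $\mathcal Y=\ker(A^p)\oplus\operatorname{ran}(A^p)$, with $A=N\oplus A_1$ where $N$ is nilpotent of order $p$ and $A_1$ is invertible. Then $T$ is block upper triangular with diagonal $N\oplus(A_1,C)$, where the part $\begin{pmatrix}A_1&*\\0&C\end{pmatrix}$ is invertible. The part with invertible diagonal and the nilpotent part have disjoint spectral behaviour at $0$ ($N$ has spectrum $\{0\}$, the other has $0$ in its resolvent). So we solve a Sylvester equation $NX-XD=B'$, which is solvable because $\sigma(N)\cap\sigma(D)=\emptyset$ (Rosenblum). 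This makes $T$ similar to $N\oplus D$ with $D$ invertible. Proposition \ref{p1}(2),(4),(6),(3) then give $\operatorname{asc}(T)=\operatorname{desc}(T)=p$.

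Conversely, if $\operatorname{asc}(T)=\operatorname{desc}(T)=p<\infty$, the ascent computation above gives $\operatorname{asc}(A)=p$. For descent, $0$ is then a pole of the resolvent of $T$ (or $T$ is invertible). Since $C$ is invertible, the Riesz projection of $T$ at $0$ has range $\ker(T^p)=\ker(A^p)\oplus0$ and complement $\operatorname{ran}(T^p)$. Compressing to $\mathcal Y$, we get $\mathcal Y=\ker(A^p)\oplus\big(\operatorname{ran}(T^p)\cap\mathcal Y\big)$, with $A$ invertible on the second summand. Therefore $\operatorname{desc}(A)\le p$, and Proposition \ref{p1}(5) forces $\operatorname{desc}(A)=p$. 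The delicate step is checking that the second summand is $A$-invariant, and we expect it to need $\sigma(C)\not\ni0$ via the spectral decomposition.
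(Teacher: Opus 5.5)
The paper gives no proof of this proposition; it is quoted from \cite{RA2013}, so your argument has to stand on its own. It can be completed, but it contains a false claim that sent you on an unnecessary detour, and it leaves a step unproved.

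\textbf{The descent obstruction vanishes.} Your kernel computation $\ker(T^k)=\ker(A^k)\oplus 0$ is correct. In the descent analysis, however, the operator you say is ``not automatically in the range of $A$'' always is. Compute $T^{k+1}$ both as $T\,T^k$ and as $T^k\,T$. This gives $B_{k+1}=AB_k+BC^k=A^kB+B_kC$, so $B_kC-AB_k-BC^k=-A^kB$ and
\[
B_kC^{-k}-B_{k+1}C^{-k-1}=-A^kBC^{-k-1},
\]
whose range lies in $\operatorname{ran}(A^k)$. Your own criterion then reads: $\operatorname{ran}(T^k)=\operatorname{ran}(T^{k+1})$ if and only if $\operatorname{ran}(A^k)=\operatorname{ran}(A^{k+1})$, i.e.\ $\operatorname{desc}(T)=\operatorname{desc}(A)$. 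So ascent and descent transfer \emph{separately}, and the proposition follows at once. The pairing in the statement is not ``exactly why'' anything; it plays no role in the proof.

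\textbf{The lower triangular case.} This case is dual to the upper triangular one, not ``the same computation.'' Write $T^k=\begin{pmatrix}A^k&0\\D_k&C^k\end{pmatrix}$. Then $\operatorname{ran}(T^k)=\operatorname{ran}(A^k)\oplus\mathcal Z$ is a product, while $\ker(T^k)=\{(x,-C^{-k}D_kx): x\in\ker(A^k)\}$ is a graph. Since $\ker(T^k)\subseteq\ker(T^{k+1})$, and the first-coordinate projection is injective on $\ker(T^{k+1})$ with image $\ker(A^{k+1})$, you again get $\operatorname{asc}(T)=\operatorname{asc}(A)$. This case should be written out rather than left to symmetry.

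\textbf{If you keep the detour.} The route through Proposition~\ref{p1}(5) works, but as written it has loose ends.
\begin{itemize}
\item Rosenblum's theorem is a complex-scalar result, whereas the paper allows $\mathbb F=\mathbb R$. Use instead the explicit solution $X=\sum_{j=0}^{p-1}N^jB'D^{-j-1}$, which satisfies $NX-XD=-B'$ because $N^p=0$. This avoids spectra altogether.
\item Similarly, over $\mathbb R$, replace the Riesz projection by the algebraic decomposition $\mathcal X=\ker(T^p)\oplus\operatorname{ran}(T^p)$.
\item In the converse, the step you flag as delicate is immediate. Both $\mathcal Y\oplus 0$ and $\operatorname{ran}(T^p)$ are $T$-invariant, hence so is $W=\operatorname{ran}(T^p)\cap(\mathcal Y\oplus 0)$, on which $T$ acts as $A$.
\item What does need proof, and what you only assert, is that $A|_W$ is bijective. $T$ is injective on $\operatorname{ran}(T^p)$ and maps it onto itself. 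If $T(r_1,r_2)=(w,0)$ with $(r_1,r_2)\in\operatorname{ran}(T^p)$, then $Cr_2=0$ forces $r_2=0$, so the preimage lies in $W$. This is where the invertibility of $C$ enters; no spectral argument about $\sigma(C)$ is needed.
\end{itemize}
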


Recall that an operator $T \in \mathcal B(\mathcal X)$ is said to be algebraic if there exists a non-zero  polynomial $p(t)$ such that $p(T) = 0$; is said to be locally algebraic if  for each $x \in  \mathcal{X} $, there exists a polynomial $p_x(t)$ such that $p_x(T)x = 0 $; is said to be algebraic  of degree $n$ if  there exists a nonzero polynomial  $p(t)$  with degree  $n$  such that  $p(T) = 0$, and there is no nonzero polynomial of degree less than  $n$  annihilating  $T$.   Obviously,   any  algebraic operator is necessarily a locally algebraic operator, and the converse is true by Kaplansky Theorem. All finite rank operators are algebraic. For more details, the readers may refer \cite{HR1973}.

For non-scalar multiple algebraic operators, we have the following result.

\begin{prop}\label{al op}
	Assume that $A \in \mathcal B(\mathcal X)\setminus{\mathbb F}I$ is an algebraic operator. Then, for any  $\omega\in \mathbb F^*$, there  exists  an
	operator degree  not 2  with rank $< \infty $, or   a rank-one operator  $T \in \mathcal B(\mathcal X)$ such that
	$\operatorname{asc}(AT + TA) \neq \operatorname{asc}(AT + TA+\omega T)$ and $\operatorname{desc}(AT + TA) \neq \operatorname{desc}(AT + TA+\omega T)$.
\end{prop}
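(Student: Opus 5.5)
We will construct $T$ explicitly, guided by the minimal polynomial of $A$. Since $A\notin\mathbb F I$ is algebraic, it has some eigenvalue-type structure we can exploit. Either the minimal polynomial of $A$ has a root $\lambda\in\mathbb F$, or (in the real case) it has only irreducible quadratic factors.

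In the first case, pick $x\ne 0$ with $Ax=\lambda x$. Since $A$ is not scalar, we can also find a functional $f$ with $A^*f=\mu f$ for some root $\mu$. For the rank-one operator $T=x\otimes f$ we then have
$$AT+TA=(\lambda+\mu)\,x\otimes f,\qquad AT+TA+\omega T=(\lambda+\mu+\omega)\,x\otimes f .$$
By Proposition \ref{p1}(1),(7), the ascent and descent of a multiple $c\,x\otimes f$ are $0$ when $c=0$ (in $\dim\mathcal X\ge 3$ this gives asc $0$ for the zero operator, since $\ker 0=\ker 0^1$). When $c\neq 0$ they are $1$ or $2$, according as $f(x)\ne0$ or $f(x)=0$.

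We therefore aim to choose $x,f,\lambda,\mu$ so that exactly one of $\lambda+\mu$ and $\lambda+\mu+\omega$ vanishes. This can be arranged when the spectrum has enough points, e.g. $\mu=-\lambda$ or $\mu=-\lambda-\omega$. If the spectrum does not allow this, we instead arrange that $f(x)$ behaves differently. Failing that, we do not use a rank-one $T$. Instead we restrict to a finite-dimensional invariant subspace $\mathcal M$ spanned by a Jordan chain, or by a cyclic subspace of dimension $k\le\deg p$. There we decompose $\mathcal X=\mathcal M\oplus\mathcal N$ with $A$ upper triangular, and take $T=P_\mathcal M$-supported of finite rank. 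Then $AT+TA$ and $AT+TA+\omega T$ have the block form $\begin{pmatrix}*&*\\0&0\end{pmatrix}$. By Proposition \ref{op ma} (case $C=0$) together with Proposition \ref{p1}(4), their ascent and descent are controlled by the finite matrices $A_1T_1+T_1A_1$ and $A_1T_1+T_1A_1+\omega T_1$, up to $+1$.

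On $\mathcal M$ we are in $\mathcal M_k(\mathbb F)$. We choose $T_1$ so that one of the two matrices is invertible (ascent $0$) and the other is nilpotent, or at least singular (ascent $\ge1$). Take $T_1=I_k$ when $A_1=J_k(\lambda)$: this gives $2J_k(\lambda)$ versus $2J_k(\lambda)+\omega I$, and one of $2\lambda$, $2\lambda+\omega$ is nonzero. If $\lambda=0$ or $2\lambda+\omega=0$, one matrix is nilpotent of index $k$ and the other invertible. Otherwise we pass to $T=x\otimes f$ as above with $\mu$ chosen from a second eigenvalue. The degree-not-$2$ caveat in the statement reflects that for $A$ of degree $2$ with two distinct eigenvalues, these choices coincide, and one uses the rank-one option.

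The main obstacle is the bookkeeping when the block $C=0$ estimate only gives $n\le\operatorname{asc}\le n+1$. We must ensure that the two values actually differ, not merely their finite parts. We handle this by making one of the compressed matrices have ascent $0$ and the other ascent $\ge 2$. For descent we argue symmetrically using Proposition \ref{op ma} for descents, noting that finite-rank perturbations keep ranges closed so Proposition \ref{p1}(8) applies. In the real case with no real eigenvalue, we work on a $2$-dimensional invariant subspace where $A_1$ is a rotation-type matrix and choose $T_1$ explicitly there.
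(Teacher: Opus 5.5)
There is a genuine gap, and it sits exactly where the statement has content.

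First, a correction: $\operatorname{asc}(0)=1$, not $0$, because $\ker 0^0=\{0\}\neq\mathcal X=\ker 0$. So for your eigen-pair choice $T=x\otimes f$ (with $Ax=\lambda x$, $A^*f=\mu f$), the operators $(\lambda+\mu)x\otimes f$ and $(\lambda+\mu+\omega)x\otimes f$ have different ascents only if $f(x)=0$ and $\lambda+\mu\in\{0,-\omega\}$. When $f(x)\neq0$ both ascents equal $1$. ``Arranging that $f(x)$ behaves differently'' has no content, since $f(x)$ is common to both operators.

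Your block device $T_1=I_k$ works only when $\lambda\in\{0,-\frac{\omega}{2}\}$ and there is a Jordan block of size $k\ge2$. Even then, $T=I_k\oplus 0$ (with $\mathcal N\neq0$) is an idempotent of rank $k\ge2$, i.e.\ algebraic of degree $2$. That is exactly the kind of $T$ the statement excludes. You have read ``degree not $2$'' as a condition on $A$, but it is a condition on $T$. It is what Claim 7 (Case 2) in the proof of Theorem~\ref{1} needs: there $\phi(T)=T^\dagger$ is known only for rank-one $T$ and for $T$ admitting $x_0$ with $x_0,Tx_0,T^2x_0$ independent.

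Now take $A=\operatorname{diag}(1,1,2)$ (or $I\oplus 2I$) and $\omega=1$. No eigenvalue lies in $\{0,-\frac12\}$, no two eigenvalues sum to $0$ or $-1$, and all Jordan blocks are trivial, so none of your cases yields a $T$. In fact no rank-one $T$ works at all. Put $A'=A+\frac{\omega}{2}I$, so that $AT+TA+\omega T=A'T+TA'$.

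\begin{itemize}
\item If $f(x)=0$: $f(A'x)=f(Ax)$, and $f(A'^2x)=f(A^2x)$ whenever $f(Ax)=0$. Neither product can vanish, since that would force two eigenvalues of $A$ (resp.\ $A'$) to sum to $0$. Proposition~\ref{main2} then gives equal ascents.
\item If $f(x)=1$: the rank of the product is the same for $A$ and $A'$. In the rank-one case both ascents are $1$, since $A$ and $A'$ are invertible. In the rank-two case, Proposition~\ref{main1} together with $(A-1)(A-2)=0$ and $(A'-\frac32)(A'-\frac52)=0$ gives ascent $2$ iff $f(Ax)\in\{1,2\}$, iff $f(A'x)\in\{\frac32,\frac52\}$. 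Ascent $3$ never occurs for either operator.
\end{itemize}

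So in this degree-$2$ situation, which you dispatch with ``one uses the rank-one option'', a finite-rank $T$ of degree $\neq2$ is unavoidable, and your proposal never constructs one.

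That is precisely the role of Lemmas~\ref{l2}, \ref{l3} and \ref{l1}. They give explicit small matrices $T_a$, padded so that $\operatorname{diag}\{T_a,-2a\}$, $\operatorname{diag}\{T_a,a-1\}$ or $T_a$ itself has degree $\neq2$. For these, the Jordan product with the rescaled $A$ (namely $C_a$) is nilpotent of index $2$ or $3$, while the one with the rescaled $A+\frac{\omega}{2}I$ (namely $C_b$, $b\neq a$) has ascent $0$ or $1$. The paper places these blocks case by case via the (real) Jordan form, and uses rank-one $T$ only in the degenerate cases you did identify. Your real case (``choose $T_1$ explicitly there'') is also left unspecified.
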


To prove Proposition \ref{al op},  the following three lemmas are needed.

\begin{lem}\label{l2}
	Assume that
	$T_a = \begin{pmatrix}
		-a&a^2\\
		2&-a
	\end{pmatrix}$ and  $C_b = \begin{pmatrix}
		1 & b  \\
		0 & 1
	\end{pmatrix}$  in $\mathcal M_2(\mathbb F)$, where $a \in \mathbb F^*$ and $b \in \mathbb F$.
	Then $\operatorname{diag}\{T_a, -2a\}$  is algebraic with degree not  2 and
	
	{\rm (1)} $\operatorname{asc}( C_a T_a+T_a C_a) =\operatorname{desc}( C_a T_a+T_a C_a) = 2$;
	
	{\rm (2)}   $\operatorname{asc}(C_b T_a+T_a C_b) =\operatorname{desc}( C_b T_a+T_a C_b) =\begin{cases}1 &{\rm if}\ b=-a,\\
		0&{\rm if}\ \ b\not=\pm a.\end{cases}$
\end{lem}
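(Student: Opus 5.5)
Everything here is a finite-dimensional computation with $2\times2$ and $3\times3$ matrices, so the plan is to compute explicitly and read off ascent and descent from ranks of powers. For square matrices $\operatorname{asc}(M)=\operatorname{desc}(M)$ always holds by Proposition \ref{p1}(5), and both equal the size of the largest Jordan block at eigenvalue $0$. So it suffices to determine the nilpotent part of each Jordan product.

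\emph{Degree claim.} First compute $T_a^2$. With $T_a=\begin{pmatrix}-a&a^2\\2&-a\end{pmatrix}$ we have $\operatorname{tr}T_a=-2a$ and $\det T_a=a^2-2a^2=-a^2$. By Cayley--Hamilton the minimal polynomial of $T_a$ is $t^2+2at-a^2$, since $T_a$ is not scalar. Hence the minimal polynomial of $\operatorname{diag}\{T_a,-2a\}$ is the least common multiple of $t^2+2at-a^2$ and $t+2a$. Evaluating at $t=-2a$ gives $4a^2-4a^2-a^2=-a^2\neq0$, so $-2a$ is not a root and the two polynomials are coprime. The least common multiple therefore has degree $3$, so the operator is algebraic of degree $3\neq2$.

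\emph{Jordan products.} Next compute $C_bT_a+T_aC_b=2T_a+(N T_a+T_aN)b$, where $N=E_{12}$. We have $NT_a=\begin{pmatrix}2&-a\\0&0\end{pmatrix}$ and $T_aN=\begin{pmatrix}0&-a\\0&2\end{pmatrix}$. So
$$M_b:=C_bT_a+T_aC_b=\begin{pmatrix}-2a+2b&2a^2-2ab\\4&-2a+2b\end{pmatrix}.$$
Its determinant is $4(b-a)^2-8a(a-b)=4(b-a)(b-a+2a)=4(b-a)(b+a)$.
\begin{itemize}
\item For $b\neq\pm a$, $M_b$ is invertible, so the ascent and descent are $0$.
\item For $b=a$, we get $M_a=\begin{pmatrix}0&0\\4&0\end{pmatrix}$. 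This is nonzero with square zero, so the ascent and descent are $2$ by Proposition \ref{p1}(6). This gives (1).
\item For $b=-a$, we get $M_{-a}=\begin{pmatrix}-4a&4a^2\\4&-4a\end{pmatrix}$. This is singular with trace $-8a\neq0$, so it has rank one and is not nilpotent. Hence $\operatorname{asc}=\operatorname{desc}=1$ by Proposition \ref{p1}(7), or directly because the eigenvalue $0$ is simple.
\end{itemize}

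There is no real obstacle. The only care needed is the sign bookkeeping in $M_b$ and in the factorisation of its determinant. I would double-check these by confirming that $b=a$ makes $M_b$ nilpotent and that $b=-a$ leaves a nonzero trace.
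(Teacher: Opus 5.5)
Your proof is correct and follows the paper's route: you compute $C_bT_a+T_aC_b=\begin{pmatrix}2b-2a&2a^2-2ab\\4&2b-2a\end{pmatrix}$, factor its determinant as $4(b-a)(b+a)$, and then treat the square-zero case $b=a$, the invertible case $b\neq\pm a$, and the rank-one non-nilpotent case $b=-a$, exactly as the paper does. The only cosmetic differences are in the degree claim and one citation: you get degree $3$ from the coprimality of the block minimal polynomials $t^2+2at-a^2$ and $t+2a$, whereas the paper squares the $3\times 3$ matrix and solves for $\lambda_1,\lambda_2$; and for $b=a$ you cite Proposition~\ref{p1}(6), which is the appropriate item, where the paper cites (4).
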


\begin{proof}
	Firstly, we prove that the degree of $\operatorname{diag}\{T_a, -2a\}$  is not   2, that is, there is no scalars $\lambda_{1}, \lambda_{2}\in\mathbb F$ such that $T_a^2 + \lambda_{1}T_a + \lambda_2 I = 0$.  Otherwise, as
	$$\begin{pmatrix}
		T_a&0\\
		0 &-2a\\
	\end{pmatrix}^2= \begin{pmatrix}
		3a^2  & -3a^3 & 0 \\
		-4a & 3a^2& 0 \\
		0 & 0 & 4a^2
	\end{pmatrix}$$
	one gets $3a^2 -a\lambda_{1}+\lambda_{2}=0$, $-4a+2\lambda_{1}=0$ and $4a^2 -2a\lambda_{1}+\lambda_{2}=0$, which implies $a=0$, a contradiction.
	
	Note that
	$$C_a T_a+T_a C_a =\begin{pmatrix}
		0&0\\
		4&0\\
	\end{pmatrix} \text{ and }
	C_b T_a+T_a C_b= \begin{pmatrix}
		2b-2a&2a^2-2ab\\
		4&2b-2a
	\end{pmatrix}.$$
	By   Proposition \ref{p1}(4),  one has  $\operatorname{asc}( C_a T_a+T_a C_a) = \operatorname{desc}( C_a T_a+T_a C_a) =2$. Now, assume that $b\not=a$. If $b \neq -a$, then $C_b T_a+T_a C_b$ is invertible, which and    Proposition \ref{p1}(3) imply  $\operatorname{asc}(C_b T_a+T_a C_b) =\operatorname{desc}( C_b T_a+T_a C_b) =0$. If $b = -a$, it is easily seen that $\operatorname{rank}( C_b T_a+T_a C_b)=1$ and $( C_b T_a+T_a C_b)^2 \not=0$. So $\operatorname{asc}( C_b T_a+T_a C_b) = 1$, and hence $\operatorname{desc}( C_b T_a+T_a C_b) =1$.
	
	The proof is finished.
\end{proof}

\begin{lem}\label{l3}
	Assume that
	$T_a = \begin{pmatrix}
		a(a+1)&2a\\
		-2a&-(a+1)
	\end{pmatrix}$ and $C_b = \begin{pmatrix}
		1 & 0 \\
		0 & b
	\end{pmatrix} $  in $\mathcal M_2(\mathbb F)$, where $a \in \mathbb F \setminus \{0, \pm1 \}$ and $b \in \mathbb F$.
	Then $\operatorname{diag}\{T_a, a-1\}$ is algebraic with degree not  2 and
	
	{\rm (1)} $\operatorname{asc}( C_a T_a+T_a C_a) =\operatorname{desc}( C_a T_a+T_a C_a) = 2$;
	
	{\rm (2)}   $\operatorname{asc}(C_b T_a+T_a C_b)=\operatorname{desc}( C_b T_a+T_a C_b) =\begin{cases}1 &{\rm if}\ b=\frac{1}{a},\\
		0&{\rm if}\ \ b\not\in\{\frac{1}{a},a\}.\end{cases}$
\end{lem}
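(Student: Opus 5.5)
The plan is to repeat the argument of Lemma \ref{l2}: compute the characteristic polynomial of $T_a$ once, and then do one explicit $2\times 2$ computation of the Jordan product $C_bT_a+T_aC_b$. Throughout, the standing assumption $a\notin\{0,\pm1\}$ is what keeps the relevant quantities nonzero.

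\emph{Degree not 2.} Since the off-diagonal entry $2a\neq0$, $T_a$ is not scalar, so its minimal polynomial is its characteristic polynomial. We have
$$\operatorname{tr}T_a=a(a+1)-(a+1)=a^2-1,\qquad \det T_a=-a(a+1)^2+4a^2=-a(a-1)^2 .$$
Hence the only monic quadratic annihilating $T_a$ is $t^2-(a^2-1)t-a(a-1)^2$. For $\operatorname{diag}\{T_a,a-1\}$ to have degree $2$, this polynomial would also have to vanish at $t=a-1$. Substituting gives
$$(a-1)^2-(a+1)(a-1)^2-a(a-1)^2=-2a(a-1)^2,$$
which is nonzero because $a\neq0,1$. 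Also no linear polynomial annihilates $\operatorname{diag}\{T_a,a-1\}$, since $T_a$ is not scalar. So the degree is not $2$ (it is $3$), and the operator is clearly algebraic.

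\emph{The Jordan product.} Since $C_b=\operatorname{diag}(1,b)$, the $(i,j)$-entry of $C_bT_a+T_aC_b$ is $(c_i+c_j)t_{ij}$, where $c_1=1$ and $c_2=b$. Therefore
$$C_bT_a+T_aC_b=\begin{pmatrix}2a(a+1)&2a(1+b)\\ -2a(1+b)&-2b(a+1)\end{pmatrix}.$$
Its determinant is
$$4a\bigl[a(1+b)^2-b(a+1)^2\bigr]=4a(a-b)(1-ab).$$
So the matrix is invertible exactly when $b\notin\{a,\tfrac1a\}$. In that case Proposition \ref{p1}(3) gives ascent and descent $0$.

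\emph{The two singular cases.*} In both cases the matrix has the nonzero entry $2a(a+1)$ (as $a\neq0,-1$), so it has rank one, and Proposition \ref{p1}(7) says that ascent $=$ descent $\in\{1,2\}$, with value $2$ exactly in the nilpotent case. Nilpotency of a rank-one $2\times2$ matrix is equivalent to its trace vanishing.
\begin{itemize}
\item For $b=a$ the matrix equals $2a(a+1)\begin{pmatrix}1&1\\-1&-1\end{pmatrix}$, which has trace $0$. So it is nilpotent and the value is $2$, proving (1).
\item For $b=\tfrac1a$ the trace is $2(a+1)(a^2-1)/a$, which is nonzero since $a\neq\pm1$. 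So the matrix is not nilpotent and the value is $1$, proving the first case of (2).
\end{itemize}

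The only step needing care is the factorization of the determinant, together with checking that the excluded values $0,\pm1$ of $a$ are exactly what is needed. There is no real obstacle beyond this bookkeeping.
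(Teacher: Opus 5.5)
Your proof is correct and follows essentially the same route as the paper. You compute $C_bT_a+T_aC_b$, use its determinant (your factorization $4a(a-b)(1-ab)$ makes explicit the paper's claim that $\det\neq0$ iff $b\neq\frac1a$ when $b\neq a$), and settle the two rank-one cases via Proposition~\ref{p1}(7); your trace test plays the role of the paper's check that the square is nonzero, and your evaluation $p_{T_a}(a-1)=-2a(a-1)^2\neq0$ supplies the degree-not-2 computation that the paper only sketches by analogy with Lemma~\ref{l2}.
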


\begin{proof}
	By a similar computation to that in the proof of Lemma \ref{l2},  one can show that $\operatorname{diag}\{T_a, a-1\}$  is algebraic with  degree not  2.
	
	Note that
	$$C_a T_a+T_a C_a =2a(a+1)\begin{pmatrix}
		1&1\\
		-1&-1\\
	\end{pmatrix} \text{ and }
	C_b T_a+T_a C_b= \begin{pmatrix}
		2a(a+1)&2a(b+1)\\
		-2a(b+1)&-2b(a+1)
	\end{pmatrix}.$$
	By   Proposition \ref{p1}(4),  one gets  $\operatorname{asc}( C_a T_a+T_a C_a) = \operatorname{desc}( C_a T_a+T_a C_a) =2$. Now, assume that $b\not=a$. In addition,  it is clear that  $\operatorname{det}(C_b T_a+T_a C_b) \neq 0$ if and only if  $b \neq \frac{1}{a}$. By Proposition \ref{p1}(3), it follows that
	$\operatorname{asc}( C_b T_a+T_a C_b)=\operatorname{asc}( C_b T_a+T_a C_b)=0$ if $b \neq \frac {1}{a} $.
	If $b = \frac {1}{a} $, one gets $\operatorname{rank}( C_b T_a+T_a C_b)=1$ and $( C_b T_a+T_a C_b)^2 \not=0$,  and so  $\operatorname{asc}( C_b T_a+T_a C_b) =\operatorname{desc}( C_b T_a+T_a C_b)= 1$.
	
	The proof is finished.
\end{proof}

\begin{lem}\label{l1}
	Assume that
	$T_a =\begin{pmatrix}
		0&{\frac{1}{2}}&{\frac{1}{{a + 1}}}\\
		{\frac{1}{{2a - 2}}}&{\frac{1}{{2a - 2}}}&{\frac{1}{{{a^2} - 1}}}\\
		{\frac{{ - 1}}{{{a^2} - 1}}}&{\frac{{ - 1}}{{{a^2} - 1}}}&{\frac{{ - 1}}{{2{a^2} - 2a}}}
	\end{pmatrix}$ and $C_b = \begin{pmatrix}
		1 & 0 & 0 \\
		0 & 1 & 0 \\
		0 & 0 & b
	\end{pmatrix}$ in $\mathcal M_3(\mathbb F)$, where $a \in \mathbb F \setminus \{0, \pm1 \}$ and  $b \in \mathbb F$.
	Then  $T_a$ is algebraic with degree not 2,  and
	
	{\rm (1)} $\operatorname{asc}( C_a T_a+T_a C_a) =\operatorname{desc}( C_a T_a+T_a C_a) = 3$;
	
	{\rm (2)} $\operatorname{asc}(C_b T_a+T_a C_b)=\operatorname{desc}( C_b T_a+T_a C_b) =\begin{cases}1 &{\rm if}\ b=\frac{1}{a},\\
		0&{\rm if}\ \ b\not\in\{\frac{1}{a},a\}.\end{cases}$
\end{lem}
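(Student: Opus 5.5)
Following the proofs of Lemmas \ref{l2} and \ref{l3}, we compute explicitly and then use Proposition \ref{p1}. There are three steps.

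\textbf{Step 1: degree of $T_a$.} We show that $T_a$ is not annihilated by any monic quadratic $t^2+\lambda_1 t+\lambda_2$. The plan is to compute $T_a^2$ and compare entries of $T_a^2+\lambda_1T_a+\lambda_2I=0$. The $(1,1)$-entry of $T_a$ is $0$. So the off-diagonal $(1,2)$ and $(1,3)$ entries of $T_a^2$, divided by the nonzero entries $\tfrac12$ and $\tfrac1{a+1}$ of $T_a$, must both equal $-\lambda_1$. Checking that these two ratios differ for $a\notin\{0,\pm1\}$ gives a contradiction. Alternatively, one can compute the characteristic polynomial and show that it has no repeated factor allowing a quadratic minimal polynomial; but the entry comparison is quicker. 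Since $T_a$ is a $3\times3$ matrix, it is algebraic by Cayley--Hamilton. It is also not scalar, so the degree is $3$, in particular not $2$.

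\textbf{Step 2: part (1).} Since $C_a=\operatorname{diag}(1,1,a)$, we have
$$C_aT_a+T_aC_a=[(c_i+c_j)t_{ij}],\qquad c=(1,1,a).$$
In entries this is
$$\begin{pmatrix}0&1&1\\ \frac1{a-1}&\frac1{a-1}&\frac1{a-1}\\ -\frac1{a-1}&-\frac1{a-1}&-\frac1{a}\end{pmatrix}\cdot(\text{up to the obvious factors}).$$
We then verify that this matrix $N$ is nilpotent of order exactly $3$, that is, $N^2\ne0$ and $N^3=0$. Equivalently, its trace, principal $2\times2$ minor sum and determinant all vanish, and $N^2\neq 0$. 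Proposition \ref{p1}(6) then gives $\operatorname{asc}=\operatorname{desc}=3$. The constants in $T_a$ were evidently chosen to make the trace and minors vanish, so this step is a routine but careful verification.

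\textbf{Step 3: part (2).} For general $b$, with $c=(1,1,b)$, the matrix $M_b=C_bT_a+T_aC_b$ differs from the matrix of Step 2 only in its third row and column. Its determinant is therefore a polynomial in $b$ of degree at most $2$, and it vanishes at $b=a$. We expect it to factor as
$$\kappa\,(b-a)(b-\tfrac1a)$$
with $\kappa\neq0$. Proving this factorization is the main computational obstacle; we would check it by also evaluating the determinant at $b=1/a$ and comparing leading coefficients.

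Given the factorization, the two cases follow. For $b\notin\{a,1/a\}$, $M_b$ is invertible, and Proposition \ref{p1}(3) gives ascent and descent $0$. For $b=1/a$, we show that $\operatorname{rank}M_b=2$ and that $0$ is a simple root of the characteristic polynomial, i.e.\ the coefficient of $t$ (the sum of the principal $2\times2$ minors) is nonzero. Then $M_b$ is similar to $\operatorname{diag}\{0,B\}$ with $B$ invertible. By Proposition \ref{p1}(2),(3),(4), $\operatorname{asc}(M_b)=\operatorname{desc}(M_b)=1$, since in finite dimensions ascent equals descent.
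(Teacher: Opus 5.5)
Your proposal is correct and follows essentially the paper's route: the explicit form of $C_bT_a+T_aC_b$, nilpotency of order $3$ at $b=a$ via Proposition \ref{p1}(6), the determinant for invertibility, and $\ker M=\ker M^2$ at $b=1/a$; the computations you defer all work out, since the determinant's numerator is $b(a+1)^2-a(b+1)^2=(a-b)(ab-1)$, the coefficient of $t$ at $b=1/a$ is $-\frac{a+1}{a^2}\neq0$, and your two ratios in Step 1 are $\frac{a+3}{2(a+1)^2}$ and $\frac{1}{2a}$, which agree only when $a=1$. One slip: the $(3,3)$ entry of $C_aT_a+T_aC_a$ is $2a\cdot\bigl(-\frac{1}{2a^2-2a}\bigr)=-\frac{1}{a-1}$, not $-\frac1a$, and with $-\frac1a$ the trace would not vanish.
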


\begin{proof}
	By a similar computation to that in the proof of Lemma \ref{l2},  one can verify that $T_a$ is algebraic with  degree not  2.
	
	Next, for any $b\in\mathbb F$ with $b\not=a$, we have $$C_a T_a + T_a C_a = \begin{pmatrix}
		0 & 1 & 1 \\
		\frac{1}{a-1} & \frac{1}{a-1} & \frac{1}{a-1} \\
		-\frac{1}{a-1} & -\frac{1}{a-1} & -\frac{1}{a-1}
	\end{pmatrix} \text{ and } C_bT_a+T_a C_b= \begin{pmatrix}
		0 & 1 & \frac{b+1}{a+1} \\
		\frac{1}{a-1} & \frac{1}{a-1} & \frac{b+1}{a^2-1} \\
		-\frac{b+1}{a^2-1} & -\frac{b+1}{a^2-1} & -\frac{b}{a^2-a}
	\end{pmatrix}.$$
	A direct computation gives
	$$(C_a T_a+T_a C_a)^2 \neq 0,\ \ (C_a T_a+T_a C_a)^3=0,$$ and $$ \operatorname{det}(C_b T_a+T_a C_b)= \frac{{b{{( {a + 1} )}^2}-a{{({b + 1})}^2}}}{{a{{({a - 1})}^2}{{( {a + 1})}^2}}}.$$
	By  Proposition \ref{p1}(6),  we see that $\operatorname{asc}( C_a T_a+T_a C_a)=3$.  In addition,  it is bvious that  $\operatorname{det}(C_b T_a+T_a C_b) \neq 0$ if and only if  $b \neq \frac{1}{a}$. This fact and Proposition \ref{p1}(3) imply that
	$\operatorname{asc}( C_b T_a+T_a C_b) = 0$ if $b \neq \frac {1}{a} $.
	If $b = \frac {1}{a} $,  a simple computation gives  $\operatorname{ker}(C_b T_a + T_a C_b)=\operatorname{ker}((C_b T_a + T_a C_b)^2)$, and so  $\operatorname{asc}( C_b T_a+T_a C_b) = 1$.
	
	As the descent and the ascent of any matrix are the same,  the proof is completed.
\end{proof}

\begin{proof}[Proof of Proposition\,\ref{al op}]
	We will construct suitable $T$ such that $\operatorname{asc}(AT + TA) \neq \operatorname{asc}(AT + TA+\omega T)$   by considering the following two cases.	
	
	{\bf Case 1.} $\mathbb F=\mathbb C$.
	
	Let $p(\lambda) = \prod_{i=1}^{n} (\lambda - \lambda_i)^{\alpha_i}$ be the minimal polynomial of $A$  such that $p(A) = 0$. Here,  $\{\lambda_1, \lambda_2, \dots, \lambda_n\}$ is the spectrum of $A$.  By \cite{VM2007} and \cite[Theorem V.11.3]{DCL1980},
	$\mathcal X$ has a space decomposition
	$\mathcal X = \bigoplus_{i=1}^{n} \operatorname{ker}((A - \lambda_i)^{\alpha_i})$
	and  $A$  has a matrix representation
	$$A = \operatorname{diag}\{J_1,J_2,\ldots,J_n\},$$
	where $J_i$ is the Jordan block matrix  on $\operatorname{ker}((T - \lambda_i)^{\alpha_i})$ with
	$$J_i= \operatorname{diag}\{J_{r_{i1}}(\lambda_i),J_{r_{i2}}(\lambda_i),\ldots,J_{r_{i{k_i}}}(\lambda_i)\},$$
	$\dim\operatorname{ker}((T - \lambda_i)^{\alpha_i})= \sum_{j=1}^{k_i}r_{ij}$ and $k_i$ is the number of Jordan blocks for eigenvalue $\lambda_i$.

	{\bf Subcase 1.1. }   $\max\{r_{ij}\} \geq 3$.
	
	Without loss of generality,  assume that $r_{11} \geq 3$.  $A$  can be written as
	$$A=J_{r_{11}}(\lambda_1) \oplus A_1,$$
	where  $A_1$ is the direct sum of all remaining Jordan blocks of $A$.
	
	If  $\lambda_1=0$, by taking $T=E_{11}\oplus 0$, one gets $AT+TA=E_{12}\oplus 0$ and $AT+TA+\omega T=(\omega E_{11}+ E_{12})\oplus 0$. So $\operatorname{asc}(AT + TA) \not=\operatorname{asc}(AT + TA+\omega T)$.  If  $\lambda_1+\frac{\omega}{2}=0$, putting $T=E_{11}\oplus 0$ leads to $AT+TA=(2\lambda_1 E_{11}+ E_{12})\oplus 0$ and $AT+TA+\omega T=E_{12}\oplus 0$, which implies $\operatorname{asc}(AT + TA) \not=\operatorname{asc}(AT + TA+\omega T)$.
	If $\lambda_1 \not\in\{0, -\frac{\omega}{2}\}$,  we write $$A= \begin{pmatrix}
		J_2(\lambda_1)& A_{12}  \\
		0 & J_{r_{11-2}}(\lambda_1)
	\end{pmatrix}\oplus A_1.$$  By taking $T=\begin{pmatrix}
		T_c& 0  \\
		0 & I_{r_{11-2}}
	\end{pmatrix}\oplus 0$  with $T_c$ ($c=\frac{1}{\lambda_1}\not=0$) stated as in Lemma \ref{l2}.
	one gets
	$$AT+TA=\begin{pmatrix}
		J_2(\lambda_1)T_c+T_cJ_2(\lambda_1)	&T_cA_{12}+A_{12} \\
		0 & 2J_{r_{11-2}}(\lambda_1)
	\end{pmatrix}  \oplus 0$$
	and
	$$AT+TA+\omega I=\begin{pmatrix}
		J_2(\lambda_1)T_c+T_cJ_2(\lambda_1)+\omega T_c &T_cA_{12}+A_{12}  \\
		0 & 2J_{r_{11-2}}(\lambda_1) + \omega I_{r_{11-2}}
	\end{pmatrix}  \oplus 0.$$
	Since $J_{r_{11-2}}(\lambda_1) $ and $J_{r_{11-2}}(\lambda_1) + \frac{\omega}{2}I_{r_{11-2}}$ are invertible,  by   Proposition \ref{p1}(3)-(4) and Proposition \ref{D}, it is easy to check $\operatorname{asc}(AT + TA) \not=\operatorname{asc}(AT + TA+\omega T)$.
	
	{\bf Subcase 1.2.} $\max\{r_{ij}\}= 2$.

	If   $r_{ij}=2$ for  exactly one pair  $(i,j)$ and $r_{kl}=1$ for all other pairs  $(k,l)$, then $A$ is composed of one $2 \times 2 $ Jordan block and some $1 \times 1 $ Jordan blocks.  Reordering the Jordan blocks in $A$, we may write
	$$A= \operatorname{diag}\{J_2(\xi_1),\xi_2\} \oplus A_1 \ \ {\rm with} \ \ \xi_1, \xi_2\in \sigma(A).$$
	If $\xi_1 \in\{0, -\frac{\omega}{2}\}$, by taking $T =E_{11}\oplus 0$, it is easy to check
	$\operatorname{asc}(AT + TA) \not=\operatorname{asc}(AT + TA+\omega T)$.
	If $\xi_1 \not\in\{0, -\frac{\omega}{2}\}$, then we have $A= \xi_1 \operatorname{diag}\{J_2(1)+(\frac{1}{\xi_1}-1)E_{12}, \frac{\xi_2}{\xi_1} \} \oplus A_1$. Taking $T=\operatorname{diag}\{T_c, -2c\} \oplus 0$ with $c=\frac{1}{\xi_1} \not=0$ as stated in Lemma \ref{l2} and applying Lemma \ref{l2}, one achieves  $\operatorname{asc}(AT + TA) \neq \operatorname{asc}(AT + TA+\omega T)$.

	If there exist at least two pairs $(i,j)$ and $(i',j')$ such that  $r_{ij}=r_{i'j'}=2$, then
	$A$ at least contains two  $2 \times 2 $ Jordan blocks. Still,
	we may write
	$$A=\operatorname{diag}\{J_2(\mu_1),J_2(\mu_2)\} \oplus A_2  \ \ {\rm with} \ \ \mu_1, \mu_2\in \sigma(A).$$
	Simialrly,	one can take suitable $T$  such that $\operatorname{asc}(AT + TA) \neq \operatorname{asc}(AT + TA+\omega T)$.
	
	{\bf Subcase 1.3.} $\max\{r_{ij}\}= 1$.
	
	In this case,  $A$ is a diagonal operator and  assume that
	$$A=\operatorname{diag}(\eta_1,\eta_2, \eta_3) \oplus A_1\ \ {\rm with}\ \ \eta_1, \eta_2, \eta_3 \in \sigma(A).$$
	
	If exactly two of $\eta_1,\eta_2,\eta_3$ coincide, we assume  $\eta_1=\eta_2$.	If $\eta_1+ \eta _3=0$,  putting rank-one operator $T=E_{13}\oplus 0$ gives $AT + TA= 0$, which yields  $\operatorname{asc}(AT + TA)=1$ and $\operatorname{asc}(AT + TA+\omega T)= \operatorname{asc}(\omega E_{13} \oplus 0) =2$.
	If  $\eta_1+\eta_3\not=0$ and $\eta _1=-\frac{\omega}{2}$,  putting $T=E_{12}\oplus 0$ leads to $AT + TA=-\omega E_{12}\oplus 0 $, which yields  $\operatorname{asc}(AT + TA)=2$ and $\operatorname{asc}(AT + TA+\omega T)=1$.
	If $\eta_1+ \eta _3 \neq 0$ and $\eta_1\neq -\frac{\omega}{2}$, then  $A =\eta  _1\operatorname{diag} (1, 1, \frac{\eta_3}{\eta_1})  \oplus A_1$ with $c=\frac{\eta_3}{\eta_1}\not\in\{0, \pm1 \}$. By taking $T=T_c\oplus 0$ with $T_c$ as stated in Lemma \ref{l1},  and using Lemma \ref{l1},  one obtains $\operatorname{asc}(AT + TA) \neq \operatorname{asc}(AT + TA+\lambda T)$.
	
	If $\eta_1,\eta_2,\eta_3$ are distinct, then at least two of $\eta_1,\eta_2,\eta_3$ are non-zero. Without loss of generality, assume that $\eta_1 \not=0$ and $\eta_2 \not=0$. If   $\eta_1+\eta_2=0$, putting $T=E_{12}\oplus 0$ leads to $AT + TA= 0 $ and $AT + TA+\omega T=\omega E_{12} \oplus 0 $, which yields  $\operatorname{asc}(AT + TA)=1$ and $\operatorname{asc}(AT + TA+\omega T)=2$. If    $\eta_1+\eta_2 \neq 0$ and $\eta_1= -\frac{\omega}{2}$, putting $T=(E_{11}+E_{12})\oplus 0$ gives $AT + TA= (2\eta_1E_{11}+(\eta_1+\eta_2)E_{12})\oplus 0 $ and $AT + TA+\omega T=(\eta_2+\frac{\omega}{2})E_{12} \oplus 0 $, which yields  $\operatorname{asc}(AT + TA)=1$ and $\operatorname{asc}(AT + TA+\omega T)=2$.
	If    $\eta_1+\eta_2 \not= 0$ and $\eta_1 \not=-\frac{\omega}{2}$, then $A=\frac{1}{\eta_1}\operatorname{diag}(1,\frac{\eta_2}{\eta_1}, \frac{\eta_3}{\eta_1}) \oplus A_1$.
	Taking $T=\operatorname{diag}\{T_c, c-1\} \oplus 0$ with $c=\frac{\eta_2}{\eta_1}\not\in\{0, \pm1 \}$ as stated in Lemma \ref{l3} and applying Lemma \ref{l3}, one achieves  $\operatorname{asc}(AT + TA) \neq \operatorname{asc}(AT + TA+\omega T)$.
	
	If $\eta_1=\eta_2=\eta_3$, then $A=\operatorname{diag}(\eta_1,\eta_1, \eta_1) \oplus A_1$.
	In this case, if $\sigma(A_1)=\{\eta_1\}$, then $A \in {\mathbb F}I$, a contradiction. Thus, there exists $\eta_4 \in \sigma(A_1)$ such that $\eta_4\not=\eta_1$, and then we can rewrite $A=\operatorname{diag}(\eta_1,\eta_1, \eta_4) \oplus A_2$. By the above   discussion,  such  an  algebraic operator with degree  not 2 and rank $< \infty $, or   a rank-one operator  $T$ exists.

	{\bf Case 2.} $\mathbb F=\mathbb R$.

	If $\sigma(A)$ is real, by a similar   discussion to that of Case 1,  there  exists   an  algebraic operator with degree  not 2 and rank $< \infty $, or   a rank-one operator   $T \in \mathcal B(\mathcal X)$ such that
	$\operatorname{asc}(AT + TA) \neq \operatorname{asc}(AT + TA+ \omega T)$. If $\sigma(A)$ is not real, then $A$ contains at least  a pair of complex conjugate eigenvalues $a \pm i b$ with $b > 0$ and the minimal polynomial of $A$ can be written as
	$$p(\lambda) = (\lambda^2 - 2a\lambda + (a^2 + b^2))^{\beta}p_1(\lambda).$$
	\if false where $p_1(\lambda)$ contains the remaining irreducible polynomial of $A$.\fi
	Thus $\mathcal X$ has a space decomposition $\mathcal X = \operatorname{ker}((A^2 - 2aA + (a^2 + b^2)I)^{\beta}) \oplus \operatorname{ker}(p_1(A))$, and  by \cite[pp.201-202]{RA2013},
	$A$ has a matrix representation
	$$A =  \operatorname{diag}\bigl\{R_{t_{1}}(a, b), \ldots, R_{t_{j}}(a, b)\bigr\} \oplus A',$$
	where $A' = A|_{\operatorname{ker}(p_1(A))}$,
	$R_{t_j}(a, b)$ is a $2{t_k} \times 2{t_k}$ ($k \in \{1,2,\ldots,j\}$) matrix
	\[
	R_{t_j}(a, b) =
	\begin{pmatrix}
		C & I_2 & & & \\
		& C & I_2 & & \\
		& & \ddots & I_2 & \\
		& & & C
	\end{pmatrix} \text{ with } C = \begin{pmatrix} a & b \\ -b & a \end{pmatrix} \text{ and }I_2 = \begin{pmatrix} 1 & 0 \\ 0 & 1 \end{pmatrix}.
	\]
	Moreover,  $\dim \operatorname{ker}((A^2 - 2a A + (a^2 + b^2)I)^{\beta}) = \sum_{k=1}^{j} 2t_{k}$ and write $p=\sum_{k=1}^{j} 2t_{k}  $.

	{\bf Subcase 2.1.} $p \geq 4 $.
	
	In this case, $A$ can be written as
	$$A = \begin{pmatrix}
		C& A_{12}\\
		0 & A_{22}
	\end{pmatrix} \oplus A'.$$
	
	Let \begin{equation}	T_{\alpha, \beta} = \begin{pmatrix}
			\beta & \frac{2\alpha^2 + \beta^2}{\alpha} \\
			\frac{\beta^2}{\alpha} & \beta
		\end{pmatrix}  \text{ and } S_{\alpha, \beta} = \begin{pmatrix}
			\beta & \frac{2\beta^2 + \alpha^2}{\alpha} \\
			\frac{2\beta^2}{\alpha} & \beta
		\end{pmatrix}. \end{equation}

	If $a \neq 0$, by taking $T=\begin{pmatrix}
		T_{a,b}& 0\\
		0 & bI_{p-2}
	\end{pmatrix}  \oplus 0$,  one gets $$AT+TA=\begin{pmatrix}
		CT_{a,b}+T_{a,b}C&T_{a,b}A_{12}+bA_{12} \\
		0 & 2bA_{22}
	\end{pmatrix}  \oplus 0$$
	and
	$$AT+TA+\omega I=\begin{pmatrix}
		CT_{a,b}+T_{a,b}C+\omega T_{a,b} &T_{a,b}A_{12}+bA_{12} \\
		0 & 2bA_{22}+b\omega I_{p-2}
	\end{pmatrix}  \oplus 0.$$
	Then, a simple computation gives
	$$CT_{a,b}+T_{a,b}C=
	\begin{pmatrix}
		0 &4a^2 + 4b^2 \\
		0 & 0
	\end{pmatrix}$$  and $$CT_{\omega,b}+T_{\omega,b}C+\omega T_{\omega,b}=	\begin{pmatrix}
		\omega b & m \\
		\frac{\omega b^2}{a} & \omega b
	\end{pmatrix},$$  where $ m=\frac{\omega b^2+2\omega a^2+4a^3+4ab^2}{a}$.
	Since $A_{22}$ and $2A_{22}+\omega I_{p-2} $ are invertible in  $ \mathcal M_{p-2}(\mathbb R)$, by Proposition \ref{p1}(3)-(4) and Proposition \ref{D}, one has
	$\operatorname{asc}(AT + TA)=\operatorname{asc}(CT_{a,b}+T_{a,b}C)=2$ and
	$\operatorname{asc}(AT + TA+\omega I)=\operatorname{asc}(0)=1$.
	
	Similarly,	if $a =0$, by taking $T=\begin{pmatrix}
		S_{\omega,b}& 0\\
		0 & bI_{p-2}
	\end{pmatrix}  \oplus 0$,  one gets
	
	$$CS_{\omega,b}+S_{\omega,b}C=	\begin{pmatrix}
		-\omega b & 2b^2 \\
		-2b^2 & -\omega b
	\end{pmatrix} $$ and
	$$CS_{\omega,b}+S_{\omega,b}C+\omega S_{\omega,b}=\begin{pmatrix}
		0& 4\omega^2 + 4b^2\\
		0&0
	\end{pmatrix} .$$ 	So $\operatorname{asc}(AT + TA+\omega T)=\operatorname{asc}(CS_{\omega,b}+S_{\omega,b}C)=2$ and
	$\operatorname{asc}(AT + TA)=\operatorname{asc}(0)=1$.
	In addition, by a similar discussion to that of the proof in Lemma \ref{l2},  one can show that  the above two forms of  $T$ are  not  of degree two.

	{\bf Subcase 2.2.}   $p=2 $.
	
	In this case, $A=R_2(a, b) \oplus A'$.
	If $A'$ contains $J_s(\gamma)$ ($s \geq3$) or $R_{l}(c, d)$ ($l \geq 4$) for some $\gamma, c\pm id \in \sigma (A') $, by the above   discussion,  we may take a suitable  $T \in \mathcal B(\mathcal X)$ such that
	$\operatorname{asc}(AT + TA) \neq \operatorname{asc}(AT + TA+ \omega T)$. Thus, we only need to consider $A$  of the following form:
	$$ \operatorname{diag}\bigl\{R_2(a, b), R_2(c, d)\} \oplus A_1, \\ \operatorname{diag}\bigl\{R_2(a, b), \gamma \}\oplus A_1, \\ \operatorname{diag}\bigl\{R_2(a, b), J_2(\gamma) \} \oplus A_1.$$
	Simialrly, using $T_\alpha$ and $S_\alpha$,	one can take suitable $T$  such that $\operatorname{asc}(AT + TA) \neq \operatorname{asc}(AT + TA+\omega T)$.
	
	Also note that, all operators
	$T$ constructed as above are  finite-rank operators, either rank-one or of degree not equal to two. Thus, combining the foregoing discussions, the result concerning the ascent holds. 
	
	Symmetrically, an analogous argument establishes that the proposition is also valid for the descent.
\end{proof}

\section{ Ascent  and descent of rank-one  and rank-two operators}

In this section, we  will discuss properties of  the ascent (descent) of rank-one operators and rank-two operators.

Firstly, for the ascent of any rank two operators, we have the following result.

\begin{prop} \label{rank2} Assume that   $A \in \mathcal B(\mathcal X)$ with $\operatorname{rank}(A)=2$. Then the following assertions hold.
	
	{\rm (1)} $\operatorname{asc}(A)  \in \{ {1,2,3} \}$.
	
	{\rm (2)}   $\operatorname{asc}(A) = 1$ if and only if   $\operatorname{rank}(A^2)=2$.
	
	{\rm (3)}  $\operatorname{asc}(A) = 2$ if and only if either ${A^2}=0$ or $\operatorname{rank}(A^2)=\operatorname{rank}(A^3)=1$.
	
	{\rm (4)}  $\operatorname{asc}(A) = 3$ if and only if  $\operatorname{rank}(A^2)=1 \text{ and }{A^3} = 0$.
\end{prop}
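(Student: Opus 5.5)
We plan to reduce everything to a finite-dimensional computation. Since $\operatorname{rank}(A)=2$, the range $\mathcal M=\operatorname{ran}(A)$ is a two-dimensional, hence closed, subspace. Every power $A^k$ with $k\ge 1$ satisfies $A^k=A^{k-1}A$, so $\operatorname{ran}(A^k)\subseteq\mathcal M$. Moreover, $A^{k+1}$ is determined by the restriction $A_0=A|_{\mathcal M}\in\mathcal B(\mathcal M)\cong\mathcal M_2(\mathbb F)$.

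The key identity is
$$\ker(A^{k+1})=\{x:\ Ax\in\ker(A_0^{k})\}=A^{-1}\bigl(\ker(A_0^{k})\bigr),$$
and, because $A$ maps $\mathcal X$ onto $\mathcal M$, the map $V\mapsto A^{-1}(V)$ is injective on subspaces $V\subseteq\mathcal M$. Hence $\ker(A^{k+1})=\ker(A^{k+2})$ holds iff $\ker(A_0^{k})=\ker(A_0^{k+1})$, for every $k\ge 0$ (with $A_0^0=I_{\mathcal M}$). It follows that $\operatorname{asc}(A)=\operatorname{asc}(A_0)+1$ when $\operatorname{asc}(A_0)\ge 1$. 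If $\operatorname{asc}(A_0)=0$, then $\operatorname{asc}(A)\le 1$, and $\operatorname{asc}(A)\neq 0$ because $\ker A\neq 0$ ($\dim\mathcal X\ge 3>2$). The equation $\ker(A^0)=\ker(A)$ fails for the same reason, so $\operatorname{asc}(A)\ge 1$ always. Combining these cases, $\operatorname{asc}(A)=\max\{1,\operatorname{asc}(A_0)+1\}$, which by the above equals $\operatorname{asc}(A_0)+1$ in all cases.

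Next we classify $A_0\in\mathcal M_2(\mathbb F)$. For a $2\times2$ matrix, $\operatorname{asc}(A_0)\in\{0,1,2\}$, which gives (1). We then express each case through ranks:
\begin{itemize}
\item $\operatorname{asc}(A_0)=0$ iff $A_0$ is invertible, iff $\operatorname{rank}(A^2)=\operatorname{rank}(A_0A)=\operatorname{rank}(A_0)=2$. This gives (2).
\item $\operatorname{asc}(A_0)=2$ iff $A_0$ is a nonzero nilpotent, i.e.\ $\operatorname{rank}(A_0)=1$ and $A_0^2=0$. Translated, this means $\operatorname{rank}(A^2)=1$ and $A^3=A_0^2A=0$. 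This gives (4).
\item $\operatorname{asc}(A_0)=1$ iff either $A_0=0$ (i.e.\ $A^2=0$) or $A_0$ has rank one and is non-nilpotent. In the latter case $\operatorname{rank}(A^2)=1$ and $A^3\neq0$, and then $\operatorname{rank}(A^3)=\operatorname{rank}(A_0^2)=1$. This gives (3).
\end{itemize}

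For the translations we use two facts. First, $\operatorname{rank}(A^{k+1})=\operatorname{rank}(A_0^{k})$ because $A$ is onto $\mathcal M$. Second, a rank-one $2\times2$ matrix has ascent 1 exactly when it is not nilpotent, which is Proposition \ref{p1}(7) applied on $\mathcal M$.

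The main point needing care is the injectivity of $V\mapsto A^{-1}(V)$ and the boundary case $k=0$. The rest is routine, and the case analysis in (2)–(4) is exhaustive, which confirms the listed equivalences.
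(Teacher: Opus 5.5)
Your proof is correct, and it takes a genuinely different route from the paper's. The paper compresses $A$ to an explicit finite-dimensional piece. For $\dim\mathcal X\ge 4$ it picks a basis $\varphi_1,\varphi_2$ of $\operatorname{ran}(A)$ and preimages $\psi_1,\psi_2$ with all four vectors independent, and splits $\mathcal X=\mathcal M_A\oplus\mathcal N_A$ with $\dim\mathcal M_A=4$ and $\mathcal N_A\subseteq\ker(A)$, so that $A=A_1\oplus 0$ with $A_1\in\mathcal M_4(\mathbb F)$ of rank two. It then runs through the six similarity types $B_1,\dots,B_6$ (Jordan forms, plus the rotation block $C$ when $\mathbb F=\mathbb R$), tabulating the ascent and the ranks of squares and cubes. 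The case $\dim\mathcal X=3$ is handled separately with a list of $3\times 3$ forms.

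You instead reduce intrinsically to the two-dimensional invariant subspace $\operatorname{ran}(A)$. Using $\ker(A^{k+1})=A^{-1}(\ker(A_0^{k}))$ and the injectivity of $V\mapsto A^{-1}(V)$ on subspaces of $\operatorname{ran}(A)$, you obtain $\operatorname{asc}(A)=1+\operatorname{asc}(A_0)$ and $\operatorname{rank}(A^{k+1})=\operatorname{rank}(A_0^{k})$. Everything then reduces to the four elementary types of $2\times 2$ matrices: invertible, zero, rank one non-nilpotent, and rank one nilpotent.

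Your route has several advantages:
\begin{itemize}
\item It needs no choice of independent preimages; the paper asserts that choice without proof, and it uses $\dim\mathcal X\ge 4$.
\item It needs no canonical forms and no check that the list of normal forms is complete.
\item It is uniform over $\mathbb R$ and $\mathbb C$ and over $\dim\mathcal X=3$ versus $\dim\mathcal X\ge 4$. The hypothesis $\dim\mathcal X\ge 3$ enters only through $\ker(A)\neq\{0\}$.
\item It extends verbatim to every finite-rank $A$ with nontrivial kernel, giving $\operatorname{asc}(A)=1+\operatorname{asc}(A|_{\operatorname{ran}(A)})\le\operatorname{rank}(A)+1$ together with the same rank translation.
\end{itemize}
What the paper's approach buys in exchange is explicit normal forms for $A$, which are concrete but not needed for the statement.

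One small tidy-up: once you know $\ker(A)\neq\{0\}$, your equivalence for all $k\ge 0$ already gives $\operatorname{asc}(A)=\operatorname{asc}(A_0)+1$ directly. The intermediate case split and the $\max\{1,\operatorname{asc}(A_0)+1\}$ formula can therefore be dropped.
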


\begin{proof}
	Assume that   $A \in \mathcal B(\mathcal X)$ with $\operatorname{rank}(A)=2$. We will prove this proposition by considering two cases.
	
	{\bf Case 1.}   $\dim\mathcal X \geq 4$.

	Take any basis $\{\varphi_1,  \varphi_2\}$  in $\operatorname{ran} (A)$ and let $\psi_1,  \psi_2$ be vectors in $\mathcal X$ such that  $\varphi_1,  \varphi_2,\psi_1,  \psi_2$  linearly independent and   $A\psi_j = \varphi_j $ $(j = 1,2)$. Then there exsit two functionals  $f_1, f_2\in \mathcal X^*$ such that  $\langle \varphi_j, f_k \rangle = \delta_{jk}=\begin{cases}1 &{\rm if} \ j=k\\ 0&{\rm if}\  j\not=k \end{cases}$ for $j,k\in\{1,2\}$. It is clear that 
	$$A = \varphi_1\otimes g_1+\varphi_2 \otimes g_2,\ \ {\rm where} \ \ g_j = A^* f_j\ \ {\rm with}\ \ j=1,2.$$  Set
	$$\mathcal M_A = \operatorname{span} \{ \varphi_1,  \varphi_2, \psi_1, \psi_2 \}  \text{ and } \mathcal R_A = \operatorname{ker}(g_1)\bigcap   \operatorname{ker} (g_2).$$
	Then  $\mathcal X = \mathcal M_A + \mathcal R_A$ and any vector $x \in \mathcal X$ can be written as  $x = u + v$, where
	$$u = x - \sum_{j=1}^2 \langle x, g_j \rangle \psi_j \in \mathcal R_A  \text{ and }  v = \sum_{j=1}^2 \langle x, g_j \rangle \psi_j \in \mathcal M_A.$$
	Denote by   $\mathcal S_A = \mathcal M_A \cap \mathcal R_A$. Since $\dim \mathcal S_A < \infty$, there exists some subspace $\mathcal N_A$ such that $\mathcal R_A = \mathcal{S}_A \oplus \mathcal N_A$.  It is clear that $\mathcal X = \mathcal M_A \oplus \mathcal N_A$ with  $\dim \mathcal M_A =4$. Under the  space decomposition $\mathcal X = \mathcal M _A \oplus \mathcal N_A$, $A$ can be written as $A =  A_1 \oplus 0$ with $A_1\in \mathcal M_4(\mathbb F)$ and $\operatorname{rank}(A_1)=2$.
	Obviously,  $A_1$ is similar to one of the following forms:
	$$\begin{cases}
		B_1=\operatorname{diag}(\lambda_1, \lambda_2,0,0),  &
		B_2= \operatorname{diag}\{\lambda_1, J_2(0),0\}, \\
		B_3=\operatorname{diag}\{J_2(0), J_2(0) \},  &  B_4= \operatorname{diag}\{J_2(\lambda_1), 0,0 \},\\
		B_5=\operatorname{diag}\{J_3(0),0 \},& B_6=\operatorname{diag} \{ C, 0,0 \}\ {\rm with}\ C=\begin{pmatrix} a & b \\ -b & a \end{pmatrix},
	\end{cases}$$
	where $ b, \lambda_1,\lambda_2$ are  nonzero numbers.
	
	Note that
	\begin{equation}\label{similar}
		\operatorname{asc}(B_1)=\operatorname{asc}(B_4)=\operatorname{asc}(B_6)=1,\ \
		\operatorname{asc}(B_2)=\operatorname{asc}(B_3)=2 \text{ and }
		\operatorname{asc}(B_5)=3.
	\end{equation}
	So, by Proposition \ref{p1}(2) and (4),
	$\operatorname{asc}(A) \in \{1, 2,3 \}$.
	
	Also note that 	
	\begin{equation}\label{rank}
		\operatorname{rank}(B_1^2)=\operatorname{rank}(B_4^2)=\operatorname{rank}(B_6^2)=2,\ \
		\operatorname{rank}(B_2^2)=\operatorname{rank}(B_5^2)=1  \text{ and }
		B_3^2=0.
	\end{equation}
	It is easily seen from Eqs.\eqref{similar}-\eqref{rank} that  $\operatorname{asc}(A)=1\Leftrightarrow\operatorname{rank}(A^2)=2$, that is, the statement (2) holds.

	Next,   it is clear that
	\begin{equation}\label{asc 2}\operatorname{rank}(B_1^2)=\operatorname{rank}(B_4^2)=\operatorname{rank}(B_6^2)=2,\ \
		\operatorname{rank}(B_2^2)=\operatorname{rank}(B_2^3)=1 \text{ and }
		B_3^2=B_5^3=0.
	\end{equation}
	As $\operatorname{asc}(A)=2$ if and only if $A_1$ is similar to either $B_2$ or $B_3$, it follows from Eq.\eqref{asc 2} that $\operatorname{asc}(A)=2\Leftrightarrow$
	either $A^2=0$ or $\operatorname{rank}(A^2)=\operatorname{rank}(A^3)=1$, and so the statement (3) holds.
	
	Finally,  by Eq.\eqref{similar},   $\operatorname{asc}(A)=3$ if and only if  $A_1$ is similar to  $B_5$.   As
	$$\operatorname{rank}(B_1^3)=\operatorname{rank}(B_4^3)=\operatorname{rank}(B_6^2)=2,\ \
	\operatorname{rank}(B_2^3)=\operatorname{rank}(B_5^2)=1 \text{ and }
	B_3^2=B_5^3=0,$$
	it is easily checked that $\operatorname{asc}(A)=3\Leftrightarrow\operatorname{rank}(A^2)=1$ and $A^3=0$. 	That is, the statement (4) is true.
	
	{\bf Case 2.}   $\dim\mathcal{X}=3$.
	
	As rank$(A)=2$, $A$ is similar to one of the following forms:
	$$\begin{pmatrix} a & b&0 \\ -b & a&0\\ 0&0&0 \end{pmatrix},\  \operatorname{diag}(\lambda_1, \lambda_2,0), \\
	\operatorname{diag}\{\lambda_1, J_2(0)\}, \\
	\operatorname{diag}\{J_2(\lambda_1), 0 \},\\
	J_3(0)$$
	with nonzero $b, \lambda_1,\lambda_2$.
	A similar analysis to that of Case 1 shows that assertions (1)-(4) still hold.
\end{proof}

By Proposition \ref{rank2},  we can give
characterizations about the ascent of  Jordan product of any rank one operator and any bounded linear operator.
Note that, rank($Ax\otimes f+x\otimes A^*f)\leq 2$.

\begin{prop}\label{main1}
	Assume that $A \in \mathcal B(\mathcal X)$ and $x \otimes f \in \mathbb F^* \mathcal P_1(\mathcal X)$. The following statements are true.
	
	{\rm (1)} If $\operatorname{rank}(Ax \otimes f + x \otimes A^*f) = 1$, then  $$\operatorname{asc}(Ax \otimes f + x \otimes A^*f) = 1\Leftrightarrow Ax \neq 0\ {\rm and}\ A^*f \neq 0$$and $$\operatorname{asc}(Ax \otimes f + x \otimes A^*f) = 2\Leftrightarrow \ {\rm either}\ Ax=0\ {\rm or}\ A^*f=0.$$

	{\rm (2)} If $\operatorname{rank}(Ax \otimes f + x \otimes A^*f) = 2$, then $$\operatorname{asc}(Ax \otimes f + x \otimes A^*f) = 1\Leftrightarrow\operatorname{rank}((Ax \otimes f + x \otimes A^*f)^2)=2,$$
	$$\operatorname{asc}(Ax \otimes f + x \otimes A^*f) = 2\Leftrightarrow f(A^2x)=\alpha f(Ax)=\alpha^2 f(x)\ {\rm for\ some}\ \alpha \in \mathbb{F}^*$$
	and  $$\operatorname{asc}(Ax \otimes f + x \otimes A^*f) = 3\Leftrightarrow f(Ax)=f(A^2x)=0.$$
	
\end{prop}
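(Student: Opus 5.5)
Replacing $x\otimes f$ by a nonzero multiple multiplies $T:=Ax\otimes f+x\otimes A^*f$ by the same scalar. So by Proposition \ref{p1}(1) I may assume $f(x)=1$; all conditions in the statement are homogeneous, so this normalization costs nothing. Throughout I write $\beta=f(Ax)=\langle x,A^*f\rangle$ and $\gamma=f(A^2x)=\langle Ax,A^*f\rangle$. The plan is to reduce everything to the rank-one criterion in Proposition \ref{p1}(7) for part (1), and to Proposition \ref{rank2} together with a $2\times 2$ "Gram matrix" computation for part (2).

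\emph{Part (1).} Suppose $\operatorname{rank}T=1$. By Proposition \ref{p1}(7), $\operatorname{asc}(T)\in\{1,2\}$, and $\operatorname{asc}(T)=2$ if and only if $T$ is nilpotent. For a rank-one operator $u\otimes g$ this happens if and only if $g(u)=0$.

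First suppose $Ax=0$. Then $T=x\otimes A^*f$ and $\langle x,A^*f\rangle=f(Ax)=0$, so $T$ is nilpotent. If instead $A^*f=0$, then $T=Ax\otimes f$ and $f(Ax)=(A^*f)(x)=0$, so again $T$ is nilpotent.

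Now suppose $Ax\neq0$ and $A^*f\neq0$. If $Ax$ and $x$ were independent and $f$, $A^*f$ were independent, then $T$ would have rank $2$. Hence either $Ax=\lambda x$ or $A^*f=\mu f$, with $\lambda\neq0$ (respectively $\mu\neq0$) because $Ax\neq0$ (respectively $A^*f\neq0$).
\begin{itemize}
\item If $Ax=\lambda x$, then $T=x\otimes(\lambda f+A^*f)$, and $\langle x,\lambda f+A^*f\rangle=\lambda+f(Ax)=2\lambda\neq0$.
\item If $A^*f=\mu f$, then $T=(Ax+\mu x)\otimes f$, and $f(Ax+\mu x)=2\mu\neq0$.
\end{itemize}
In either case $T$ is not nilpotent, so $\operatorname{asc}(T)=1$. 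Here $\mathbb F\in\{\mathbb R,\mathbb C\}$, so $2\neq0$. This gives both equivalences in (1).

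\emph{Part (2).} Suppose $\operatorname{rank}T=2$. Write $T=u_1\otimes g_1+u_2\otimes g_2$ with $u_1=Ax$, $u_2=x$, $g_1=f$, $g_2=A^*f$. Rank two forces $\{u_1,u_2\}$ and $\{g_1,g_2\}$ both to be linearly independent. Factor $T=UV$, where $U:\mathbb F^2\to\mathcal X$ sends $e_j\mapsto u_j$ (injective) and $V:\mathcal X\to\mathbb F^2$ is $y\mapsto(g_1(y),g_2(y))$ (surjective). Then
$$T^{k+1}=UG^kV,\qquad G=VU=\big[g_i(u_j)\big]=\begin{pmatrix}\beta&1\\ \gamma&\beta\end{pmatrix},$$
so $\operatorname{rank}T^{k+1}=\operatorname{rank}G^k$ for all $k\ge0$. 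Now apply Proposition \ref{rank2}, using $\det G=\beta^2-\gamma$ and $\operatorname{tr}G=2\beta$.
\begin{itemize}
\item $\operatorname{asc}(T)=1$ if and only if $\operatorname{rank}T^2=2$; this is the first equivalence directly.
\item Since $G$ has an entry equal to $1$, $G\neq0$, so $T^2\neq0$. Hence $\operatorname{asc}(T)=2$ if and only if $\operatorname{rank}G=\operatorname{rank}G^2=1$. For a $2\times2$ matrix of rank one this means $\det G=0$ and $\operatorname{tr}G\neq0$, i.e.\ $\gamma=\beta^2$ with $\beta\neq0$. Setting $\alpha=\beta$, this reads $f(A^2x)=\alpha f(Ax)=\alpha^2f(x)$ with $\alpha\in\mathbb F^*$, since $f(x)=1$.
\item $\operatorname{asc}(T)=3$ if and only if $\operatorname{rank}G=1$ and $G^2=0$, i.e.\ $G$ is nilpotent and nonzero. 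That is $\det G=\operatorname{tr}G=0$, i.e.\ $\beta=\gamma=0$, which is $f(Ax)=f(A^2x)=0$.
\end{itemize}
For the converse of the second item, if $f(A^2x)=\alpha f(Ax)=\alpha^2f(x)$ with $\alpha\neq0$, then $\beta=\alpha\neq0$ and $\gamma=\alpha^2=\beta^2$. So the condition does pin down $\beta=\alpha$, and nothing is lost.

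The only point needing care is the identity $\operatorname{rank}T^{k+1}=\operatorname{rank}G^k$. It follows because $U$ is injective and $V$ is surjective, which is exactly where the rank-two hypothesis (independence of the $u_j$ and of the $g_j$) is used. Everything else is $2\times2$ linear algebra.
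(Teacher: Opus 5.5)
Your proof is correct. Part (1) is essentially the paper's argument: rank one forces $Ax=\lambda x$ or $A^*f=\mu f$. The operator then becomes $x\otimes(\lambda f+A^*f)$ or $(Ax+\mu x)\otimes f$, with trace $2\lambda f(x)$ or $2\mu f(x)$, and Proposition~\ref{p1}(7) decides.

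Part (2) takes a genuinely different route from the paper. The paper works directly in $\mathcal X$:
\begin{itemize}
\item it expands $T^2$ and $T^3$ explicitly (its formulas \eqref{A2}--\eqref{A3});
\item it uses the independence of $f$ and $A^*f$ to see that $\operatorname{rank}T^2=1$ exactly when the two coefficient vectors are proportional, which produces $\alpha$ with $f(Ax)=\alpha f(x)$ and $f(A^2x)=\alpha f(Ax)$;
\item it computes $T^3=2\alpha f(x)^2(Ax+\alpha x)\otimes(A^*f+\alpha f)$ and appeals to Proposition~\ref{rank2}(3)--(4).
\end{itemize}
The step ``$T^3\neq0$ iff $\alpha\neq0$'' and the converse implications are left implicit there.

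You instead compress $T=UV$ to $G=VU\in\mathcal M_2(\mathbb F)$. Every rank question then reduces to $\det G=\beta^2-\gamma$ and $\operatorname{tr}G=2\beta$, via $G^2=(\operatorname{tr}G)G$ when $\det G=0$. Both directions of each equivalence come out at once, and the $\alpha$ of the statement is forced to be $f(Ax)/f(x)$.

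The paper's explicit expansions have one advantage: they are reused for Proposition~\ref{main2}, where $f(x)=0$ and your normalization $f(x)=1$ is not available.

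Your factorization also gives more than you use. Since $U$ is injective and $V$ is surjective, $\ker T^{k+1}=V^{-1}(\ker G^k)$, and $V^{-1}$ preserves strict inclusions. Hence $\operatorname{asc}(T)=1+\operatorname{asc}(G)$, using $\ker T\neq0$ because $\dim\mathcal X\geq3$. This would let you bypass Proposition~\ref{rank2} altogether.
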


\begin{proof}
	For $A \in \mathcal B(\mathcal X)$ and any $x \otimes f \in \mathbb F^* \mathcal P_1(\mathcal X)$, a direct calculation gives
	\begin{align}\label{A2}
		(Ax \otimes f + x \otimes A^*f)^2
		=&(f(Ax)Ax +f(A^2x)x )\otimes f + (f(x)Ax +f(Ax)x) \otimes A^*f
	\end{align}
	and \begin{align}\label{A3}
		(Ax \otimes f + x \otimes A^*f)^3
		=& (f^2(Ax)+ f(x)f(A^2x))Ax \otimes f + 2f(A^2x)f(Ax)x \otimes f \\ \nonumber
		&+ 2f(Ax)f(x)Ax \otimes A^*f +( f(A^2x)f(x)+ f^2(Ax))x \otimes A^*f.
	\end{align}
	
	{\bf Case 1.}   $\operatorname{rank}(Ax \otimes f + x \otimes A^*f) = 1$.
	
	In this case,  $Ax=\lambda x$ or $A^*f=\mu f$ for some $\lambda,\mu \in\mathbb{F} $ with $\lambda+\mu\neq 0$. By Proposition \ref{p1}(7),  one gets $\operatorname{asc}(Ax \otimes f + x \otimes A^*f) \in\{1,2\}$.
	
	If $Ax=\lambda x$, then $Ax \otimes f + x \otimes A^*f=x \otimes (\lambda f+A^*f)$. By
	Proposition \ref{p1}(7), one has
	$\operatorname{asc}(Ax \otimes f + x \otimes A^*f) = 2$ if and only if   $\langle x, \lambda f+A^*f\rangle =2\lambda\langle x,f\rangle=0$. As  $\langle x,f\rangle \neq 0$, we see that  $\operatorname{asc}(Ax \otimes f + x \otimes A^*f) = 2$ if and only if   $\lambda=0$.
	
	Similarly, one can show that, if $A^*f=\mu f$, then $\operatorname{asc}(Ax \otimes f + x \otimes A^*f) = 2$ if and only if   $\mu=0$.
	
	Thus, we have proved that $\operatorname{asc}(Ax \otimes f + x \otimes A^*f) = 2$ if and only if   $Ax=0$ or $A^*f=0$. Hence $\operatorname{asc}(Ax \otimes f + x \otimes A^*f) = 1\Leftrightarrow$  $Ax\not=0$ and $A^*f\not=0$.

	{\bf Case 2.}   $\operatorname{rank}(Ax \otimes f + x \otimes A^*f) = 2$.
	
	By  Proposition \ref{rank2}, $\operatorname{asc}(Ax \otimes f + x \otimes A^*f) \in\{1,2,3\}$, and moreover,
	\begin{equation}
		\operatorname{asc}(Ax \otimes f + x \otimes A^*f) = 1\Leftrightarrow{\rm rank}((Ax \otimes f + x \otimes A^*f)^2)=2.
	\end{equation}
	
	Note that, in this case, both $\{Ax, x\}$ and  $\{A^*f, f\}$ are linearly independent sets.
	So Eq.\eqref{A2} implies
	$$(Ax \otimes f + x \otimes A^*f)^2\not =0.$$
	It follows that
	\begin{equation}\label{A7}
		\operatorname{asc}(Ax \otimes f + x \otimes A^*f)\in\{2,3\}\Leftrightarrow{\rm rank}((Ax \otimes f + x \otimes A^*f)^2)=1.
	\end{equation}

	If $\operatorname{rank}(Ax \otimes f + x \otimes A^*f)^2=1$, by Eq.\eqref{A2},  
	there exists some $\alpha \in \mathbb{F}$ such that
	$$f(Ax)Ax+f(A^2x)x=\alpha f(x)Ax + \alpha f(Ax)x, $$which is equivalent to
	\begin{equation}\label{A8}f(A^2x)=\alpha f(Ax) \text{ and }  f(Ax)=\alpha f(x)
	\end{equation}
	as $Ax$ and $x$ are linearly independent.
	Combining Eq.\eqref{A8} and Eq.\eqref{A3} gives
	$$  (Ax \otimes f + x \otimes A^*f)^3=2 \alpha f^2(x)(Ax + \alpha x) \otimes( A^*f+\alpha f),$$
	and so  $\operatorname{rank}((Ax \otimes f + x \otimes A^*f)^3) \leq1$.
	
	\if false If $\alpha=0$, then  $(Ax \otimes f + x \otimes A^*f)^3 = 0$ by Eq.\eqref{A3}.  It follows from Proposition \ref{rank2} (4) that
	$\operatorname{asc}(Ax \otimes f + x \otimes A^*f) = 3$.
	
	If $\alpha\not=0$, then
	$f(A^2x)=\alpha f(Ax)=\alpha^2 f(x)$, and thus
	$$(Ax \otimes f + x \otimes A^*f)^2=f(x)(Ax + \alpha x) \otimes( A^*f+\alpha f) $$
	and
	$$  (Ax \otimes f + x \otimes A^*f)^3=2 \alpha f^2(x)(Ax + \alpha x) \otimes( A^*f+\alpha f).$$\fi
	
	Now, by  Proposition \ref{rank2}(3)-(4), Eq.\eqref{A7} implies that  	$\operatorname{asc}(Ax \otimes f + x \otimes A^*f) = 2\Leftrightarrow f(A^2x)=\alpha f(Ax)=\alpha^2 f(x)\ {\rm for\ some}\ \alpha \in \mathbb{F}^*$,
	and that  $\operatorname{asc}(Ax \otimes f + x \otimes A^*f) = 3\Leftrightarrow f(Ax)=f(A^2x)=0.$
	
	The proof is finished.
	\end{proof}

\begin{prop}\label{main2}
	  Assume that   $A \in {\mathcal B}(\mathcal{X})$ and $ x \otimes f \in \mathcal N_1(\mathcal X)$. Then the following statements hold.
	
	{\rm (1)} $\operatorname{asc}(Ax \otimes f + x \otimes A^*f) =1\Leftrightarrow$ either $Ax \otimes f + x \otimes A^*f=0$ or $ f(Ax) \neq 0 $.
	
	{\rm (2)} $\operatorname{asc}(Ax \otimes f + x \otimes A^*f) =2\Leftrightarrow Ax \otimes f + x \otimes A^*f \neq 0$ and $f(Ax)=f(A^2 x)=0$.
	
	{\rm (3)} $\operatorname{asc}(Ax \otimes f + x \otimes A^*f) =3\Leftrightarrow f(Ax)=0$ and $f(A^2 x) \neq 0$.
	
	{\rm (4)} $\operatorname{asc}(Ax \otimes f + x \otimes A^*f) \in \{2,3\}\Leftrightarrow Ax \otimes f + x \otimes A^*f \neq 0 $ and $f(Ax)=0$.
\end{prop}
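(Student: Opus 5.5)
Write $T=Ax\otimes f+x\otimes A^*f$, which has rank at most $2$. The plan is to specialize the formulas \eqref{A2} and \eqref{A3} from the proof of Proposition \ref{main1} to the nilpotent case $f(x)=0$, and then read off the ascent from Proposition \ref{p1}(7) and Proposition \ref{rank2}, splitting according to $\operatorname{rank}(T)\in\{0,1,2\}$.

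With $f(x)=0$, \eqref{A2} and \eqref{A3} become
$$T^2=(f(Ax)Ax+f(A^2x)x)\otimes f+f(Ax)\,x\otimes A^*f,$$
$$T^3=f(Ax)^2\,Ax\otimes f+2f(A^2x)f(Ax)\,x\otimes f+f(Ax)^2\,x\otimes A^*f.$$
In particular, if $f(Ax)=0$ then $T^2=f(A^2x)\,x\otimes f$ and $T^3=0$.

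\textbf{Rank $0$ and rank $1$.} If $T=0$, then $\operatorname{asc}(T)=1$, since $\ker(T^0)=\{0\}\ne\mathcal X=\ker T=\ker T^2$. Moreover $Ax\otimes f=-x\otimes A^*f$ forces $Ax=cx$ for some scalar $c$, so $f(Ax)=cf(x)=0$ and $f(A^2x)=c^2f(x)=0$.

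If $\operatorname{rank}(T)=1$, then either $\{x,Ax\}$ or $\{f,A^*f\}$ is linearly dependent. Say $Ax=\lambda x$; the case $A^*f=\mu f$ is symmetric. Then $T=x\otimes(\lambda f+A^*f)$ and
$$\langle x,\lambda f+A^*f\rangle=\lambda f(x)+f(Ax)=2\lambda f(x)=0.$$
Hence $T\in\mathcal N_1(\mathcal X)$ and $\operatorname{asc}(T)=2$ by Proposition \ref{p1}(7). In this case one also checks $f(Ax)=f(A^2x)=0$.

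\textbf{Rank $2$.} Now both $\{x,Ax\}$ and $\{f,A^*f\}$ are linearly independent. There are three subcases.
\begin{itemize}
\item If $f(Ax)\ne0$, then $f(Ax)Ax+f(A^2x)x$ and $x$ are independent. Together with the independence of $f,A^*f$, the formula for $T^2$ gives $\operatorname{rank}(T^2)=2$, so $\operatorname{asc}(T)=1$ by Proposition \ref{rank2}(2).
\item If $f(Ax)=0$ and $f(A^2x)\neq0$, then $\operatorname{rank}(T^2)=1$ and $T^3=0$, so $\operatorname{asc}(T)=3$ by Proposition \ref{rank2}(4).
\item If $f(Ax)=f(A^2x)=0$, then $T^2=0$, so $\operatorname{asc}(T)=2$ by Proposition \ref{rank2}(3).
\end{itemize}

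\textbf{Assembling the four statements.} Collecting the cases, $\operatorname{asc}(T)=1$ happens exactly when $T=0$ or $f(Ax)\ne0$, which is (1). Since $T=0$ and the rank-one case both force $f(Ax)=f(A^2x)=0$, $\operatorname{asc}(T)=2$ happens exactly when $T\ne0$ and $f(Ax)=f(A^2x)=0$, which is (2). The only way to get $\operatorname{asc}(T)=3$ is $f(Ax)=0\ne f(A^2x)$, and this condition automatically forces $\operatorname{rank}(T)=2$, so no extra hypothesis $T\neq 0$ is needed; this is (3). Statement (4) is the union of (2) and (3).

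The step requiring the most care is this bookkeeping: checking that the degenerate cases $T=0$ and $\operatorname{rank}(T)=1$ are compatible with the stated conditions. In particular, one must confirm that $f(A^2x)\ne0$ cannot occur in those cases, so that (3) holds with no further assumption.
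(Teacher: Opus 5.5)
Your proof is correct. It uses the same ingredients as the paper: the formulas \eqref{A2}--\eqref{A3} specialized to $f(x)=0$, Proposition \ref{p1}(7), and Proposition \ref{rank2}. The difference is in how the argument is organized.

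You do a forward, exhaustive classification. You split by $\operatorname{rank}(T)\in\{0,1,2\}$ and by whether $f(Ax)$ and $f(A^2x)$ vanish, compute $\operatorname{asc}(T)$ in each case, and read off all four equivalences at once. Because the partition is exhaustive, every ``only if'' direction comes for free.

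The paper instead proves (2) and (3) as separate biconditionals and obtains (1) and (4) by complement and union. For the ``only if'' part of (2) in the rank-two case, it starts from Proposition \ref{rank2}(3) and has to exclude $\operatorname{rank}(T^2)=\operatorname{rank}(T^3)=1$ indirectly. It uses $\operatorname{asc}(T)=\operatorname{desc}(T)$ to get $T^3=\lambda T^2$, hence $f(Ax)=\lambda$, $f(A^2x)=0$ and $T^2=\lambda T$, which is impossible.

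Your observation replaces that detour with a one-line rank count. When $f(Ax)\neq 0$, the $Ax$-coefficient of $f(Ax)Ax+f(A^2x)x$ is nonzero, so $\operatorname{rank}(T^2)=2$. You also verify the degenerate cases $T=0$ and $\operatorname{rank}(T)=1$ directly, rather than leaving them to the complement/union bookkeeping. The paper's version is shorter on the page; yours is more elementary and easier to check.
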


\begin{proof}
	Assume that $A \in \mathcal B(\mathcal X)$ and $ x \otimes f \in \mathcal N_1(\mathcal X)$.  By Proposition \ref{rank2}, one has $\operatorname{asc}(Ax \otimes f + x \otimes A^*f)\in\{1,2,3\}$. 	In fact, we only need to prove  that the statements (2) and (3) hold.


	(2):  If $Ax \otimes f + x \otimes A^*f \neq 0$ and $f(Ax)=f(A^2x)=0 $, then  $(Ax \otimes f + x \otimes A^*f )^2=0$. It follows from Proposition \ref{p1}(7) and Proposition \ref{rank2}(3) that  $\operatorname{asc}(Ax \otimes f + x \otimes A^*f) =2 $.

	Conversly,
	assume  that  $\operatorname{asc}(Ax \otimes f + x \otimes A^*f) =2$.  Obviously,  $Ax \otimes f + x \otimes A^*f \neq 0$.
	
	If   $\operatorname{rank}(Ax \otimes f + x \otimes A^*f)=1$, by Proposition  \ref{p1}(7), $Ax \otimes f + x \otimes A^*f \in \mathcal N_1(\mathcal X)$,  and  so
	$$f(Ax)=f(A^2x)=0.$$
	
	If $\operatorname{rank}(Ax \otimes f + x \otimes A^*f)=2$, by Proposition \ref{rank2}(3),  one has either
	$(Ax \otimes f + x \otimes A^*f)^2=0$ or
	$\operatorname{rank}((Ax \otimes f + x \otimes A^*f)^2) =\operatorname{rank}((Ax \otimes f + x \otimes A^*f)^3)=1$.  We claim that $(Ax \otimes f + x \otimes A^*f)^2=0$. Otherwise,
	$\operatorname{rank}((Ax \otimes f + x \otimes A^*f)^2) =\operatorname{rank}((Ax \otimes f + x \otimes A^*f)^3)=1$.  Then $f(Ax)\not=0$ as $(Ax \otimes f + x \otimes A^*f)^3 \not= 0$.
	Since  $\operatorname{asc}(Ax \otimes f + x \otimes A^*f) =\operatorname{desc}(Ax \otimes f + x \otimes A^*f) =2$,  $(Ax \otimes f + x \otimes A^*f)^2 $ and  $(Ax \otimes f + x \otimes A^*f)^3 $ have the same kernel and range. Thus there exists some $\lambda \in  \mathbb F^*$ such that
	\begin{equation}\label{23}
		(Ax \otimes f + x \otimes A^*f)^3 = \lambda(Ax \otimes f + x \otimes A^*f)^2.
	\end{equation} 
	A direct calculation gives $\lambda f^2(Ax)x=f^3(Ax)x$, and so
	$f(Ax)=\lambda$. This and Eq.\eqref{23} yield   $f(A^2x)=0$. It follows that $(Ax \otimes f + x \otimes A^*f)^2 = \lambda(Ax \otimes f + x \otimes A^*f)$, which is impossible. Hence
	$(Ax \otimes f + x \otimes A^*f)^2=0$.
	This  implies
	$0=(Ax \otimes f + x \otimes A^*f)^2x=f^2(Ax)x$, that is, $f(Ax)=0$. It follows from $(Ax \otimes f + x \otimes A^*f)^2=0$ again that   $f(A^2 x) = 0$.
	
	(3):  	If  $f(Ax)=0$ and $f(A^2 x) \neq 0$, a direct calculation gives   $$(Ax \otimes f + x \otimes A^*f)^2 \neq 0\text{ and } (Ax \otimes f + x \otimes A^*f)^3=0.$$  By  Proposition \ref{p1}(6),  $\operatorname{asc}(Ax \otimes f + x \otimes A^*f) =3 $.
	
	Now, assume that  $\operatorname{asc}(Ax \otimes f + x \otimes A^*f) =3 $.  By Proposition \ref{p1}(7) and Proposition \ref{rank2}(4),  one has
	$$\operatorname{rank}(Ax \otimes f + x \otimes A^*f) =2,\ \ (Ax \otimes f + x \otimes A^*f)^2 \neq 0 \text{ and }  (Ax \otimes f + x \otimes A^*f)^3=0.$$
	It is easily checked that
	$f(Ax)=0$ and  $f(A^2 x) \neq 0$.
	
	The proof is completed.
\end{proof}

By  Propositions \ref{rank2}-\ref{main2}, for  the ascent of Jordan product of any rank-one operators, we have the following result.

\begin{prop}\label{two1}
	Assume that   $A, B \in \mathcal F_1(\mathcal X)$. Then  $\operatorname{asc}(AB + BA) \in \{1,2\} $.
\end{prop}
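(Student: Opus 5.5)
The plan is to write the two rank-one operators explicitly and use the rank of the Jordan product to reduce to Proposition \ref{p1}(7) and Proposition \ref{rank2}. Write $A=x\otimes f$ and $B=y\otimes g$ with nonzero $x,y\in\mathcal X$ and nonzero $f,g\in\mathcal X^*$. A direct computation gives
$$T:=AB+BA=f(y)\,x\otimes g+g(x)\,y\otimes f,$$
so $\operatorname{rank}(T)\le 2$. Note first that $\operatorname{asc}(T)\neq 0$: since $T$ has finite rank and $\dim\mathcal X\ge 3$, it is not injective, and we use Proposition \ref{p1}(3). I would then split into three cases according to $\operatorname{rank}(T)\in\{0,1,2\}$.

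\emph{Rank $0$.} If $T=0$, then $\ker(T^0)=\{0\}\subsetneqq\ker(T)=\mathcal X=\ker(T^2)$, so $\operatorname{asc}(T)=1$.

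\emph{Rank $1$.} If $\operatorname{rank}(T)=1$, Proposition \ref{p1}(7) gives $\operatorname{asc}(T)\in\{1,2\}$ directly.

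\emph{Rank $2$.} This is the main case. By Proposition \ref{rank2}(1) we have $\operatorname{asc}(T)\in\{1,2,3\}$, so it suffices to exclude $\operatorname{asc}(T)=3$. By Proposition \ref{rank2}(4), this means excluding the situation where $\operatorname{rank}(T^2)=1$ and $T^3=0$; I will show in fact that $T$ cannot be nilpotent.

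Rank $2$ forces $\alpha:=f(y)\neq 0$ and $\beta:=g(x)\neq 0$. It also forces $\{x,y\}$ to be linearly independent, and then $\operatorname{ran}(T)=\operatorname{span}\{x,y\}$. This subspace is $T$-invariant, and with respect to the basis $\{x,y\}$ the restriction $M=T|_{\operatorname{span}\{x,y\}}$ has matrix
$$M=\begin{pmatrix}\alpha\beta & \alpha g(y)\\ \beta f(x) & \alpha\beta\end{pmatrix}.$$
If $T^3=0$, then $M^3=0$, so $M$ is a nilpotent $2\times 2$ matrix. Hence $\operatorname{tr}(M)=2\alpha\beta=0$, which contradicts $\alpha\beta\neq 0$ over $\mathbb F=\mathbb R$ or $\mathbb C$. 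Therefore $T^3\neq 0$, Proposition \ref{rank2}(4) rules out $\operatorname{asc}(T)=3$, and $\operatorname{asc}(T)\in\{1,2\}$.

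The only step needing care is the passage from $T^3=0$ to the nilpotency of the $2\times 2$ block. This works because $\operatorname{ran}(T)$ is $T$-invariant and $T^3$ restricted to it is $M^3$. The trace computation is then the decisive point, and it is exactly where the symmetric structure of the Jordan product is used.
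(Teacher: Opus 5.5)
Your proof is correct, and in the rank-two case it takes a different route from the paper.

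\textbf{The paper's route.} It computes
$(AB+BA)^2=f(y)g(x)\,[x\otimes(f(y)g+g(y)f)+y\otimes(f(x)g+g(x)f)]$.
By Proposition \ref{rank2}, $\operatorname{asc}=3$ would force $\operatorname{rank}((AB+BA)^2)=1$. The resulting linear dependence of the two functionals gives $f(x)\neq 0$, so $x\otimes f\in\mathbb F^*\mathcal P_1(\mathcal X)$. It then invokes Proposition \ref{main1}(2), which would require $f(Bx)=f(y)g(x)=0$, a contradiction.

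\textbf{Your route.} You show that $T$ cannot be nilpotent at all, because its restriction to the invariant range $\operatorname{span}\{x,y\}$ has trace $2f(y)g(x)\neq 0$ (equivalently, $\operatorname{tr}(AB+BA)=2\operatorname{tr}(AB)$). You then need only Proposition \ref{rank2}(1),(4).

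\textbf{Comparison.} Your argument is shorter and self-contained: it needs neither the formula for $T^2$ nor Propositions \ref{main1}--\ref{main2}. It also shows exactly where the Jordan structure is used. For the commutator $AB-BA$ the trace vanishes, the $2\times 2$ block can be nilpotent, and ascent $3$ can genuinely occur. The paper's route has the advantage of reusing machinery it has already built. However, its write-up has slips that yours avoids: in the rank-two case $x,y$ and $f,g$ are linearly \emph{independent}, not dependent, and the contradiction should read $f(Bx)=f(y)g(x)=0$ rather than $f(x)g(x)=0$.
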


\begin{proof}
	Let  $A = x \otimes f $ and  $B = y \otimes g $ with $x,y\in\mathcal X$, $f,g\in\mathcal X^*$. It is a direct calculation that $$ (AB + BA)^2=f(y)g(x)[x \otimes(f(y)g+g(y)f)+y \otimes(f(x)g+g(x)f)].$$
	
	If $\operatorname{rank}(AB+BA) \leq1$, then $\operatorname{asc}(AB + BA) \in \{1,2\}$ by Proposition  \ref{p1}(7) and $\operatorname{asc}(0)=1$.
	
	If $\operatorname{rank}(AB+BA)=2$, then  $f(y), g(x)\not=0$ and  both $x,y$ and $f,g$ are linearly dependent. By Proposition \ref{rank2}, one gets $\operatorname{asc}(AB + BA) \in \{1,2,3\}$ and $\operatorname{rank}((AB+BA)^2)=1$ if $\operatorname{asc}(AB + BA)=3$.
	
	If  $\operatorname{asc}(AB + BA)=3$,  then there exists $m \in \mathbb F^* $ such that $f(y)g+g(y)f=m(f(x)g+g(x)f)$, which implies $f(x) \not=0$ and $g(y) \not=0$. However, by  Propositions \ref{main1}-\ref{main2}, one has $f(Bx)=0$, that is,  $f(x)g(x)=0$, a contradiction. Hence $\operatorname{asc}(AB + BA) \in \{1,2\}$.
	
	The proof is finished.
\end{proof}

\section{ Operators characterized  by  ascent (descent) of Jordan products}

In this section, we   characterize  some classes of bounded linear operators by the ascent (descent) of Jordan products.

\if false The following two results  characterize the operators  $\mathbb F I$ by the ascent (descent) of Jordan products of operators.\fi

\begin{prop} \label{0}
	Assume that  $A \in \mathcal B(\mathcal X)$. Then  the following statements  are equivalent.
	
	{\rm (1)} $A = 0$.
	
	{\rm (2)} $\operatorname{asc}(AN + NA)= 1$ for all $N \in \mathcal N_1(\mathcal X)$.
	
	{\rm (3)} ${\rm desc}(AN+NA)= 1$ for all $N \in {\mathcal N_1}(\mathcal{X})$.
\end{prop}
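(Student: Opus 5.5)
The implication (1)$\Rightarrow$(2),(3) is a direct check. If $A=0$, then $AN+NA=0$ for every $N$. We have $\ker(0^0)=\ker I=\{0\}\subsetneqq\mathcal X=\ker 0=\ker 0^2$ and $\operatorname{ran}(0^0)=\mathcal X\supsetneqq\{0\}=\operatorname{ran}0=\operatorname{ran}0^2$. Hence $\operatorname{asc}(0)=\operatorname{desc}(0)=1$.

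For (2)$\Rightarrow$(1), I will argue by contraposition and rely entirely on Proposition \ref{main2}(1). Writing $N=x\otimes f$ with $f(x)=0$, we have $AN+NA=Ax\otimes f+x\otimes A^*f$. Hypothesis (2) then says: for all $x\in\mathcal X$, $f\in\mathcal X^*$ with $f(x)=0$, either $Ax\otimes f+x\otimes A^*f=0$ or $f(Ax)\neq 0$. Suppose $A\neq 0$. I will split into two cases.

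\emph{Case 1: some $x$ has $x,Ax$ linearly independent.} Since $\dim\mathcal X\ge 3$, Hahn--Banach gives a nonzero $f$ with $f(x)=f(Ax)=0$. Then $f(Ax)=0$, so (2) forces $Ax\otimes f+x\otimes A^*f=0$. Pick $z$ with $f(z)\neq0$ and apply the operator to $z$: we get $f(z)Ax+(A^*f)(z)x=0$, which contradicts the independence of $x$ and $Ax$.

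\emph{Case 2: every $x$ satisfies $Ax\in[x]$.} By the standard argument, $A=\lambda I$ with $\lambda\neq0$ since $A\neq 0$. Take any $x\neq0$ and any nonzero $f$ with $f(x)=0$. Then $f(Ax)=\lambda f(x)=0$, while $AN+NA=2\lambda\, x\otimes f\neq 0$ because $\mathbb F$ is $\mathbb R$ or $\mathbb C$. This contradicts (2).

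For (3)$\Rightarrow$(1), I would reduce to (2) by showing that $\operatorname{asc}(AN+NA)=\operatorname{desc}(AN+NA)$ for every $N\in\mathcal N_1(\mathcal X)$. Here $T=AN+NA$ has rank at most $2$. As in the proof of Proposition \ref{rank2}, $\mathcal X$ splits as $\mathcal M_T\oplus\mathcal N_T$ with $\dim\mathcal M_T<\infty$ and $T=T_1\oplus 0$. The ascent and descent of the matrix $T_1$ are finite, and those of $0$ equal $1$. So Proposition \ref{p1}(4) gives that both $\operatorname{asc}(T)$ and $\operatorname{desc}(T)$ are finite, and Proposition \ref{p1}(5) then yields $\operatorname{asc}(T)=\operatorname{desc}(T)$. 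Therefore (3) is equivalent to (2), and we conclude by the previous paragraph. The same reduction also gives (2)$\Leftrightarrow$(3) directly.

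The main obstacle is only the bookkeeping in Case 2: the passage from ``$Ax\in[x]$ for all $x$'' to $A\in\mathbb F I$ is the standard locally-linearly-dependent argument, and it is the only point needing a short separate justification. The rest follows from Proposition \ref{main2}(1) together with a Hahn--Banach choice of $f$, which is exactly where $\dim\mathcal X\ge3$ is used.
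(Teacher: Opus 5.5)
Your proof is correct and follows essentially the same route as the paper: rule out nonzero scalars (the paper does this via Proposition~\ref{p1}(7), which is the same computation as your Case~2), then choose $x$ with $x,Ax$ independent and a nonzero $f$ annihilating both, and apply Proposition~\ref{main2}. For (2)$\Leftrightarrow$(3) the paper simply cites Proposition~\ref{p1}(5), and your finite-rank decomposition supplies the finiteness of ascent and descent that this citation tacitly needs. One small caveat: when $\dim\mathcal X<\infty$ the zero summand may be the zero space, where the ascent and descent are $0$ rather than $1$, but only finiteness matters for your argument.
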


\begin{proof}
	(2) $\Leftrightarrow$ (3): By Proposition \ref{p1}(5), this is obvious.
	
	(1) $\Rightarrow$ (2): It is clear as  $\operatorname{asc}(0) =\operatorname{desc}(0)= 1$.

	(2) $\Rightarrow$ (1): Assume that $A\not=0$.  As $\operatorname{asc}(AN + NA) = 1$ holds for all $N \in \mathcal N_1(\mathcal X)$, by Proposition \ref{p1}(7), $A\not\in \mathbb F I$. So there exists a nonzero vector $x_0\in\mathcal X$ such that $x_0$ and $Ax_0$ are linearly independent.  Choose $f_0 \in \mathcal{X^*} $ such that $f_0(x_0)=f_0(Ax_0)=0$, and take $N=x_0\otimes f_0$. Clearly,  $AN + NA \neq 0$. It follows from Proposition \ref{main2} that  $\operatorname{asc}(AN + NA)\in \{2,3\}$, a contradiction.  Hence  $A=0$.
\end{proof}

\begin{prop}\label{I}
	Assume that  $A \in \mathcal B(\mathcal X) $. Then the following statements are equivalent.
	
	{\rm (1)} $A \in \mathbb F^*I$.
	
	{\rm (2)} $\operatorname{asc}(AN + NA)=2$ for all  $N \in \mathcal N_1(\mathcal X)$.
	
	{\rm (3)} $\operatorname{desc}(AN + NA)=2$ for all $N \in \mathcal N_1(\mathcal X)$.

	{\rm (4)} $\operatorname{asc}(AT + TA)=0$ for all injective operators $T \in \mathcal B(\mathcal X)$.
	
	{\rm (5)} $\operatorname{desc}(AT + TA)=0$ for all surjective operators $T \in \mathcal B(\mathcal X)$.
	
	{\rm (6)} $\operatorname{asc}(AT + TA)=\operatorname{asc}(T)$ for all  operators $T \in \mathcal B(\mathcal X)$.
	
	{\rm (7)} $\operatorname{desc}(AT + TA)=\operatorname{desc}(T)$ for all  operators $T \in \mathcal B(\mathcal X)$.

\end{prop}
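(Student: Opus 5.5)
We prove the equivalences by establishing (1)$\Rightarrow$ each of (2)--(7), and then deducing (1) back from each of them. The forward directions are immediate. If $A=\lambda I$ with $\lambda\ne0$, then $AT+TA=2\lambda T$, so Proposition \ref{p1}(1) gives $\operatorname{asc}(AT+TA)=\operatorname{asc}(T)$ and $\operatorname{desc}(AT+TA)=\operatorname{desc}(T)$ for every $T$. This is (6) and (7). Specializing $T$ to injective (resp. surjective) operators and using Proposition \ref{p1}(3) gives (4) and (5). Taking $T=N\in\mathcal N_1(\mathcal X)$ and using Proposition \ref{p1}(7) gives (2) and (3). In addition, (2)$\Leftrightarrow$(3) holds because $AN+NA$ has finite rank, so Proposition \ref{p1}(5) applies, as in Proposition \ref{0}. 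Also (6)$\Rightarrow$(2) and (7)$\Rightarrow$(3) are trivial specializations.

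The main step is (2)$\Rightarrow$(1). First, $A\ne0$, since $\operatorname{asc}(0)=1$. Next we show $A\in\mathbb F I$ by contradiction. Suppose $x_0$ and $Ax_0$ are linearly independent for some $x_0$. We want $f_0\in\mathcal X^*$ with $f_0(x_0)=0$, $N=x_0\otimes f_0\ne0$, and $f_0(Ax_0)\ne0$. Such an $f_0$ exists by Hahn--Banach. Proposition \ref{main2}(1) then gives $\operatorname{asc}(AN+NA)=1$, a contradiction. So $A=\lambda I$ with $\lambda\neq0$.

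For (4)$\Rightarrow$(1), take $T=I$. This gives $\operatorname{asc}(2A)=0$, so $A$ is injective, hence $A\ne0$. Assume $A\notin\mathbb F I$ and construct an injective $T$ with $AT+TA$ non-injective. The natural candidate is $T=I+x_0\otimes f_0$ with $f_0(x_0)\neq-1$, which is invertible. Then
\[
AT+TA=2A+Ax_0\otimes f_0+x_0\otimes A^*f_0=2A+ A x_0\otimes f_0 + x_0\otimes A^* f_0 ,
\]
and we want some nonzero $y$ in its kernel. The most promising choice is $y=x_0$ with $x_0,Ax_0$ independent. Then $(AT+TA)x_0=(2+f_0(x_0))Ax_0+f_0(Ax_0)x_0$, which vanishes when $f_0(x_0)=-2$ and $f_0(Ax_0)=0$. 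This choice is compatible with $f_0(x_0)\ne-1$, and such an $f_0$ exists by independence. Hence $T$ is invertible, in particular injective, while $AT+TA$ is not injective, a contradiction.

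For (5)$\Rightarrow$(1), we use the same $T$ and show that $AT+TA$ fails to be surjective. Passing to adjoints, the analogous choice of a functional $g$ with $g\circ(AT+TA)=0$ works. Concretely, pick $f_0$ with $A^*f_0,f_0$ independent, which is possible when $A\notin\mathbb F I$, and $x_0$ with $f_0(x_0)=-2$ and $f_0(Ax_0)=0$. Then $f_0\circ(AT+TA)=(2+f_0(x_0))A^*f_0+f_0(Ax_0)f_0=0$, so the range lies in $\ker f_0$. I expect the main care to be in this step and in the Hahn--Banach choices, which need independence conditions; beyond that, everything reduces to Propositions \ref{p1} and \ref{main2}.
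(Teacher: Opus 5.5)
Your proposal is correct and follows essentially the same route as the paper. The step (2)$\Rightarrow$(1) is identical: a functional with $f_0(x_0)=0\neq f_0(Ax_0)$ together with Proposition \ref{main2}(1). Your test operator $T=I+x_0\otimes f_0$ with $f_0(x_0)=-2$, $f_0(Ax_0)=0$ is, up to sign, exactly the paper's invertible $T=\operatorname{diag}(1,-1,-I_{\mathcal Z})$ on $[x_0]\oplus[Ax_0]\oplus\mathcal Z$; take $f_0=-2g$, where $g$ is the coordinate functional of $x_0$.

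The paper reads off non-injectivity and non-surjectivity of $AT+TA$ from its vanishing first column and first row. This is the same as your computations $(AT+TA)x_0=0$ and $(AT+TA)^*f_0=0$. In particular, the same pair $(x_0,f_0)$ works for both (4) and (5), so the extra independence requirement on $f_0$ and $A^*f_0$ in your last paragraph is harmless but not needed.
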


\begin{proof}
	(1) $\Rightarrow$ (2):  Obvious by   Proposition \ref{p1}(7).
	
	(1) $\Rightarrow$ (4) and  (1) $\Rightarrow$ (5): Obvious by Proposition \ref{p1}(3).
	
	(2) $\Leftrightarrow$ (3):  Obvious by Proposition \ref{p1}(5), this is clear.
	
	(2) $\Rightarrow$ (1): If there exists a nonzero vector $x_0\in\mathcal X$ such that $x_0$ and $Ax_0$ are linearly independent,
	then we may choose $f_0 \in \mathcal X^* $ such that $f_0(x_0)=0$ and $f_0(Ax_0)=1$. Let $N=x_0\otimes f_0$.  By  Proposition \ref{main2}(1),
	$\operatorname{asc}(AN + NA)=1$, a contradiction.  So, we have proved that, for any $x  \in \mathcal X$,   $x$ and $Ax$ are linearly dependent.
	Combining this and Proposition \ref{0} gives $A\in \mathbb F^*I$.
	
	(4) $\Rightarrow$ (1) and 	(5) $\Rightarrow$ (1): Obviously, $A\not=0$. If $A\not\in\mathbb F^*I$, then there exists a nonzero vector $x_0 \in \mathcal{X} $ such that $x_0$, $Ax_0$ are linearly independent. Under the space decomposition  $\mathcal X= [x_0,Ax_0] \oplus \mathcal Z$ for some subspace $\mathcal Z$, $A$ has the matrix representation
	$A=\begin{pmatrix}
		0&a_{12}&A_{13}\\
		1&a_{22}&A_{23}\\
		0&A_{32}&A_{33}
	\end{pmatrix} .$
	Take an invertible operator $T=\begin{pmatrix}
		1&0&0\\
		0&-1&0\\
		0&0&-I_z
	\end{pmatrix} \in\mathcal B(\mathcal X)$.
	As
	$AT + TA = \begin{pmatrix}
		0&0&0\\
		0&-2a_{22}&2A_{23}\\
		0&-2A_{32}&-2A_{33}
	\end{pmatrix},$
	it is obvious that $AT+TA$ is neither injective nor surjective.  So $\operatorname{asc}(AT + TA) \neq 0$ and $\operatorname{desc}(AT + TA) \neq 0$, a contradiction. Hence  $A \in \mathbb F^*I$.
	
	Now it is clear that (1)$\Rightarrow$ (6)$\Rightarrow$ (2)$\Rightarrow$ (1) and (1)$\Rightarrow$ (7)$\Rightarrow$ (3)$\Rightarrow$ (1), completing the proof.
	
\end{proof}

\begin{prop}\label{al O}
	Assume that $A \in \mathcal B(\mathcal X) $. If $\sup\limits_ {T \in  \mathcal F_n(\mathcal X)} \operatorname{asc}(AT +TA) < \infty $,
	then $A$ is an algebraic operator.
\end{prop}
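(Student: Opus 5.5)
The plan is to argue by contraposition: assume $A$ is not algebraic, and for every $m$ construct a finite-rank $T$ with $\operatorname{asc}(AT+TA)\ge m+1$. The hypothesis has to be read with $n$ ranging over all of $\mathbb N$, i.e. the supremum is over all finite-rank operators. For a fixed $n$ the condition is automatic: $\operatorname{rank}(AT+TA)\le 2n$, and a finite-rank operator of rank $r$ has ascent at most $r+1$, just as in Proposition \ref{rank2}. The construction below uses $T$ of rank $m$, so it shows $\sup_{T\in\mathcal F_n(\mathcal X)}\operatorname{asc}(AT+TA)\ge n+1$ for every $n$, and hence that the supremum over all finite-rank operators is infinite.

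\textbf{Step 1 (a long cyclic orbit).} If $A$ is not algebraic, then by Kaplansky's theorem it is not locally algebraic. So there is $x\in\mathcal X$ with $p(A)x\neq 0$ for every nonzero polynomial $p$. Put $x_j=A^jx$. For every $m$ the vectors $x_0,\dots,x_{m+1}$ are linearly independent. By Hahn--Banach, choose $g_0,\dots,g_{m+1}\in\mathcal X^*$ with $g_j(x_i)=\delta_{ij}$ for $0\le i,j\le m+1$.

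\textbf{Step 2 (the test operator).} Let
$$T=\sum_{j=0}^{m-1}x_j\otimes g_j ,$$
which has rank $m$, and set $S=AT+TA$. Writing $t_j=1$ for $j<m$ and $t_j=0$ for $j\ge m$, we have $Tx_i=t_ix_i$ for $i\le m+1$. Using $Ax_i=x_{i+1}$ one gets $Sx_i=(t_i+t_{i+1})x_{i+1}$ for $0\le i\le m$. Concretely:
\begin{itemize}
\item $Sx_i=2x_{i+1}$ for $i\le m-2$;
\item $Sx_{m-1}=x_m$;
\item $Sx_m=t_mx_{m+1}+Tx_{m+1}=0$, because $g_j(x_{m+1})=0$ for $j<m$.
\end{itemize}
Hence $\mathcal W=\operatorname{span}\{x_0,\dots,x_m\}$ is $S$-invariant. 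On $\mathcal W$, $S$ is a weighted shift with all weights nonzero, so $S|_{\mathcal W}$ is nilpotent of order exactly $m+1$. By Proposition \ref{p1}(6), $\operatorname{asc}(S|_{\mathcal W})=m+1$.

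\textbf{Step 3 (passing to the whole space).} Let $\mathcal Z=\bigcap_{j=0}^{m}\ker g_j$. Then $\mathcal X=\mathcal W\oplus\mathcal Z$ with both summands closed. Since $\mathcal W$ is invariant, $S=\begin{pmatrix} S|_{\mathcal W} & *\\ 0 & *\end{pmatrix}$ relative to this decomposition. By the first part of Proposition \ref{op ma} (ascent of upper triangular operator matrices), $\operatorname{asc}(S)\ge\operatorname{asc}(S|_{\mathcal W})=m+1$. Since $m$ is arbitrary, the supremum is infinite, which contradicts the hypothesis. Therefore $A$ is algebraic.

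The main obstacle is the design of $T$ in Step 2. The naive choice $\sum_{j\le m}x_j\otimes g_j$ makes $S$ send $x_m$ to $x_{m+1}\notin\mathcal W$, which destroys invariance. The trick is to drop the last term, so that the contributions of $AT$ and $TA$ at $x_m$ both vanish, while all earlier weights $t_i+t_{i+1}$ stay nonzero. Everything else follows from Kaplansky's theorem and the earlier propositions.
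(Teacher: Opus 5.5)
Your proof is correct and is essentially the paper's argument: Kaplansky's theorem gives a vector whose orbit $x_0,\dots,x_{n+1}$ is linearly independent, your test operator $T=\sum_{j<n}x_j\otimes g_j$ is exactly the paper's $\operatorname{diag}(1,\dots,1,0,0)\oplus 0$, and the bound $\operatorname{asc}(AT+TA)\ge n+1$ comes from the invariant span of $x_0,\dots,x_n$ via the upper-triangular block proposition. Your remark on how to read the hypothesis is apt: the paper sets $n$ equal to the supremum and then uses a rank-$n$ operator $T$, so its argument really needs either your ``all finite-rank $T$'' reading or the bound $\sup_{T\in\mathcal F_n(\mathcal X)}\operatorname{asc}(AT+TA)\le n$ (the form used in Proposition~\ref{1.2}), and your estimate $\sup_{T\in\mathcal F_n(\mathcal X)}\operatorname{asc}(AT+TA)\ge n+1$ for every $n$ covers both readings.
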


\begin{proof}
	Assume that $A$ is not an algebraic operator. Let $n=\sup\limits_ {T \in  \mathcal F_n(\mathcal X)} \operatorname{asc}(AT +TA)$.
	
	Note that there exists some $x_0 \in {\mathcal X}$ such that $x_0, Ax_0, \dots, {A^n}x_0, {A^{n+1}}x_0$ are linearly independent.  Otherwise, if $x_0, Ax_0, \dots, {A^n}x_0, {A^{n+1}}x_0$ are linearly dependent for all $x \in \mathcal X$, then $A$ is a locally algebraic operator. By  \cite[Kaplansky's Theorem]{HR1973},  $A$ is  an algebraic operator, a contradiction.
	
	Thus, take $ \{f_0, f_1,\ldots, f_n, f_{n+1} \} \subset\mathcal X^*$ such that $\langle A^i x, f_j \rangle = \delta_{ij}$, $i, j = 0, 1, \ldots, n, n+1 $.  Let $ \mathcal {Y}= \operatorname{span}\{x_0, Ax_0,  \ldots, A^nx_0, A^{n+1}x_0  \} $ and $\mathcal Z = \bigcap_{i = 0}^{n+1} \ker(f_i) $. Then $\mathcal X=\mathcal Y \oplus \mathcal Z$, and according to this space decomposition, $A$ can be represented as
	$$A= \begin{pmatrix}
		0 & 0 & \cdots  & 0& 0 & 0& a_{1} & A_{1} \\
		1 & 0 & \cdots  & 0 & 0 & 0&  a_{2} & A_{2} \\
		\vdots & \vdots & \ddots & \vdots &\vdots  & \vdots & \vdots & \vdots \\
		0 & 0 & \cdots  & 1 & 0 & 0&  a_{n} & A_{n}\\
		0 & 0 & \cdots  & 0 & 1 & 0&  a_{n+1} & A_{n+1}\\
		0 & 0 & \cdots  & 0 & 0 & 1&  a_{n+2} & A_{n+2} \\
		0 & 0 & \cdots  & 0 & 0 & 0&  B & A_{n+3}
	\end{pmatrix}.$$
	Taking $T = \operatorname{diag} (1, 1, \ldots,1,0,0) \oplus 0$ with $\operatorname{rank}(T)=n$,  a direct computation gives

	$$AT+TA= \begin{pmatrix}
		0 & 0 & \cdots  & 0 & 0 & 0 & a_{1} & A_{1,n+3} \\
		2 & 0 & \cdots  & 0 & 0 & 0 & a_{2} & A_{2,n+3} \\
		\vdots & \vdots & \ddots & \vdots  &\vdots  & \vdots & \vdots & \vdots \\
		0 & 0 & \cdots  & 2 & 0 & 0 & a_{n} & A_{n,n+3}\\
		0 & 0 & \cdots  & 0 & 1 & 0 & 0 & 0\\
		0 & 0 & \cdots  & 0 & 0 & 0 & 0 & 0 \\
		0 & 0 & \cdots  & 0 & 0 & 0 & 0 & 0
	\end{pmatrix}.$$
	By  Proposition \ref{op ma}, it is easy to see that $\operatorname {asc}(AT + TA)  \geq n+1>\sup\limits_ {T \in  \mathcal F_n(\mathcal X)} \operatorname{asc}(AT +TA)$, a contradiction. Thus $A$ is  an algebraic operator.
	
	The proof is finished.
	
\end{proof}

\begin{prop}\label{1.2}
	Assume that  $A\in \mathcal B(\mathcal X)$. The following statements are  equivalent.
	
	{\rm (1)} $\operatorname{rank}(A) = 1$.
	
	{\rm (2)} $\operatorname{asc}(AT + TA) \in\{ {1,2,3} \}$ for all $T \in \mathcal B(\mathcal X) $.

	{\rm (3)}  $\operatorname{asc}(AT + TA) \in\{ {1,2,3} \}$ for all $T \in \mathcal F_3(\mathcal X) $.
	
\end{prop}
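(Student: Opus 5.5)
We prove (1) $\Rightarrow$ (2) $\Rightarrow$ (3) $\Rightarrow$ (1). Two preliminary observations are needed.

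\emph{The zero operator.} Since the paper uses $\operatorname{asc}(0)=1$, the operator $A=0$ also satisfies (2) and (3). The statement must therefore be read with $A\neq 0$, or the convention for $\operatorname{asc}(0)$ must be adjusted. I will assume $A\ne 0$ throughout.

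\emph{The implications (1) $\Rightarrow$ (2) $\Rightarrow$ (3).} Let $A=x\otimes f$. Then
$$AT+TA=x\otimes T^*f+Tx\otimes f$$
has rank at most $2$.
\begin{itemize}
\item If it is $0$, its ascent is $1$.
\item If it has rank one, Proposition \ref{p1}(7) gives ascent in $\{1,2\}$.
\item If it has rank two, Proposition \ref{rank2}(1) gives ascent in $\{1,2,3\}$.
\end{itemize}
The value $0$ never occurs: a nonzero finite-rank operator on a space with $\dim\mathcal X\ge 3$ is not injective once its rank is at most $2$. The step (2) $\Rightarrow$ (3) is trivial.

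For (3) $\Rightarrow$ (1), assume $A\neq0$ and $\operatorname{rank}(A)\ge 2$. The goal is to produce $T\in\mathcal F_3(\mathcal X)$ with $\operatorname{asc}(AT+TA)=0$ or $\operatorname{asc}(AT+TA)\ge 4$. The preferred mechanism is to build, on a finite-dimensional subspace $\mathcal Y$, a block decomposition in which $AT+TA$ is upper triangular. Its $(1,1)$-corner should be a nilpotent Jordan block $J_4(0)$ (rank $3$), and the rest should be killed by $T$. Then Proposition \ref{op ma} gives $\operatorname{asc}(AT+TA)\ge 4$.

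\textbf{Case $A=\lambda I$, $\lambda\ne0$.} Here $AT+TA=2\lambda T$.
\begin{itemize}
\item If $\dim\mathcal X=3$, take $T=I$, which gives ascent $0$.
\item If $\dim\mathcal X\ge4$, take $T=J_4(0)\oplus 0$, of rank $3$, which gives ascent $4$ by Proposition \ref{p1}(4),(6).
\end{itemize}

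\textbf{Case $A$ not algebraic.} Repeat the proof of Proposition \ref{al O} with $n=3$. Pick $x_0$ with $x_0,Ax_0,\dots,A^4x_0$ linearly independent, and take $T=\operatorname{diag}(1,1,1,0,0)\oplus0$ in that decomposition. The computation there yields an upper-triangular form whose leading corner has ascent at least $4$, so $\operatorname{asc}(AT+TA)\ge4$.

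\textbf{Case $A$ algebraic and non-scalar, with $\operatorname{rank}(A)\ge2$.} This case is the main obstacle. The plan is to use the Jordan (real Jordan) decomposition from the proof of Proposition \ref{al al op}, splitting according to the block structure. The model is $A=P$, an idempotent of rank $2$. With respect to $\operatorname{ran}P\oplus\ker P$,
$$PT+TP=\begin{pmatrix}2T_{11}&T_{12}\\ T_{21}&0\end{pmatrix}.$$
Choose $e_1,e_2\in\operatorname{ran}P$ and $e_3,e_4\in\ker P$. Take $T_{11}=0$, and let the off-diagonal blocks realize the zig-zag chain
$$e_3\mapsto e_1\mapsto e_4\mapsto e_2\mapsto 0.$$
Then $T$ has rank $3$ and $AT+TA$ is a $J_4(0)$ on $\operatorname{span}\{e_1,\dots,e_4\}$, so its ascent is $4$.

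For general algebraic $A$, the first attempt is the same zig-zag idea. Pick two eigen-directions (or generalized eigenvectors) $u_1,u_2$ of $A$ with eigenvalues $\lambda_1,\lambda_2$, plus suitable further vectors. Then choose rank-$\le3$ operators $T$ mapping a chain of these vectors onto each other. The resulting $AT+TA$ acts by the factors $(\lambda_i+\lambda_j)$ along the chain. These factors can be made nonzero, and the diagonal terms cancelled, by choosing the chain between eigenvalue pairs with $\lambda_i+\lambda_j\ne0$ and $\lambda_i=0$ where possible. The remaining degenerate configurations are handled separately:
\begin{itemize}
\item $\sigma(A)=\{\lambda,-\lambda\}$;
\item $\dim\mathcal X=3$.
\end{itemize}
In these, the fallback is to aim instead for an invertible $AT+TA$ on a $3$-dimensional $\mathcal X$, which has ascent $0$ by Proposition \ref{p1}(3). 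This is done with the explicit $2\times2$ and $3\times3$ gadgets of Lemmas \ref{l2}--\ref{l1}, combined with Proposition \ref{D} to discard invertible diagonal pieces.

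Verifying that every Jordan configuration admits such a choice is where I expect the real work to lie.
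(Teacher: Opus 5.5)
Your preliminary points are correct, and the first is a genuine correction. Since $\operatorname{asc}(0)=1$, the operator $A=0$ satisfies (2) and (3) but not (1). The paper's proof of (3)$\Rightarrow$(1) silently starts from $\operatorname{rank}(A)\ge2$, so the statement needs $A\neq0$.

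The following parts of your proposal also work:
\begin{itemize}
\item your (1)$\Rightarrow$(2), by the rank count plus Proposition \ref{rank2}(1);
\item the scalar case;
\item the non-algebraic case. A vector with $x_0,\dots,A^4x_0$ independent exists by Kaplansky's theorem, and the rank-three projection $T$ gives $(AT+TA)^3x_0=4A^3x_0\neq0=(AT+TA)^4x_0$.
\end{itemize}

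\textbf{The gap.} The case that carries essentially all of (3)$\Rightarrow$(1) is $A$ algebraic, non-scalar, $\operatorname{rank}(A)\ge2$, and it is not proved. You check only a rank-two idempotent and then describe a plan whose verification you explicitly leave open. This is exactly where the paper's proof does its work, by a case split on the (real) Jordan structure:
\begin{itemize}
\item for $\dim\mathcal X=3$, an explicit $T$ making $AT+TA$ invertible;
\item for $\dim\mathcal X\ge4$, explicit $4\times4$ or $5\times5$ matrices $T_1$ of rank at most three, one for each configuration: a block of size $\ge3$; two blocks of size $2$; one block of size $2$ plus scalars; diagonal $A$; and over $\mathbb R$ the blocks $\begin{pmatrix}a&b\\-b&a\end{pmatrix}$. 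In each, the corner $A_{11}T_1+T_1A_{11}$ is nilpotent of index $4$, and Proposition \ref{op ma} lifts this to $A$.
\end{itemize}

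\textbf{The sketch would not close the gap as it stands.} First, the factors $\lambda_i+\lambda_j$ describe $AT+TA$ only on genuine eigenvectors. Nontrivial Jordan blocks add extra terms, and over $\mathbb R$ with non-real spectrum there are no eigenvectors at all.

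Second, the degenerate configurations are not confined to $\dim\mathcal X=3$. Take $A=I_2\oplus(-I_2)$ on $\mathbb F^4$. For every $T$ one has $AT+TA=2T_{11}\oplus(-2T_{22})$, so $\operatorname{asc}(AT+TA)\in\{0,1,2\}$ and no construction reaches ascent $4$. The contradiction must instead come from a rank-three $T$ with both diagonal blocks invertible, e.g.\ $T=\begin{pmatrix}I_2&\operatorname{diag}(1,0)\\ I_2&I_2\end{pmatrix}$. Your fallback, ``invertible $AT+TA$ on a $3$-dimensional $\mathcal X$'', does not address this case.

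Third, in the opposite direction, when $\dim\mathcal X=\infty$ every $AT+TA$ with $T\in\mathcal F_3(\mathcal X)$ has finite rank and is never injective. There only ascent $\ge4$ can give the contradiction, so the $\{\lambda,-\lambda\}$ configurations must be settled by chains of that length.

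Finally, the example above is invertible. You therefore cannot import the paper's reduction to $0\in\sigma(A)$: it uses $T=I$, which lies in $\mathcal F_3(\mathcal X)$ only when $\dim\mathcal X=3$.
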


\begin{proof}
	(1) $\Rightarrow$ (2): By   Proposition \ref{main1}-\ref{main2}, it is obvious that $\operatorname{asc}(AT + TA) \in\{ {1,2,3} \}$ for all $T \in \mathcal B(\mathcal X)$.
	
	(2)$\Rightarrow$ (3) is obvious.
	
	(3) $\Rightarrow$ (1):   Assume, on the contrary, that  $\operatorname{rank}(A) \geq 2$.  We will get a contradiciton.
	
	We first assert that $A$ is an algebraic operator and  non-invertible. In fact, by  Proposition \ref{al O}, it is obvious that $A$ is an algebraic operator. Moreover, since  $\operatorname{asc}(AT + TA) \in\{ {1,2,3} \}$ for all $T \in \mathcal B(\mathcal X) $, one gets $0 \in \sigma (A)$; otherwise, by taking $T=I$, one has $\operatorname {asc}(AT + TA) =0$, a contradiction. 
	
	In the sequel, we check (3) $\Rightarrow$ (1)  by considering three cases.

	{\bf Case 1.} $\dim \mathcal X =3$.
	
	In this case,  $\mathcal B(\mathcal X)\cong\mathcal M_3(\mathbb F)$ and  $\operatorname{rank}(A)=2$.
	
	If  $\mathbb F=\mathbb C$, then it is easy to see that  $A$ is similar to one of the following matrices:
	$$J_3(0),\ \ \operatorname{diag}\{J_2(0),\lambda\},\ \ \operatorname{diag}\{J_2(\lambda),0\},\ \ \operatorname{diag}(\lambda,\mu,0),$$
	where $\lambda,\mu\in\mathbb F$ are some  nonzero scalars.
	Note that,  for any $T\in\mathcal B(\mathcal X)$, by  Proposition \ref{p1}(2), one has
	$$	\operatorname{asc}(AT+TA) = \operatorname{asc}((S^{-1}AS)(S^{-1}TS) + (S^{-1}TS)(S^{-1}AS)) \ {\rm
		for \ all \ invertible}\ S\in\mathcal B(\mathcal X).$$
	So, without loss of generality,  we may assume that
	$A$ is one of the above four kinds of operators.  Now, by taking
	$$T=\begin{cases} E_{21}+E_{32} &{\rm if}\ A=J_3(0), \\
		E_{21}+E_{33}&{\rm if}\   A=\operatorname{diag}\{J_2(0),\lambda\},\\
		E_{11}+E_{23}+E_{31}&{\rm if}\ A=\operatorname{diag}\{J_2(\lambda),0\},\\
		E_{13}+E_{22}+E_{31}&{\rm if}\ A=\operatorname{diag}(\lambda,\mu,0),
	\end{cases}$$
	it is easy to verify  that
	$AT+TA$ is invertible, and so $\operatorname{asc}(AT+TA) =0 $, a contradition.
	
	Now assume that $\mathbb F=\mathbb R$.  If $\sigma(A)$ is real,  by the above discussion, there exists some $T \in \mathcal B(\mathcal X)$ such that $\operatorname {asc}(AT + TA) =0$, a contradiciton. If $\sigma(A)$  contains  a pair of complex conjugate eigenvalues $a \pm i b$ with $b > 0$,  then under a similarity transformation,  $A$ has the matrix representation $\begin{pmatrix}
		a & b & 0 \\
		-b & a& 0 \\
		0 & 0& 0
	\end{pmatrix}.$ Take $T = \begin{pmatrix}
		1 & 0 & 1 \\
		0 & 1 & 0 \\
		0 & 1 & 0
	\end{pmatrix}$. Then $AT + TA = \begin{pmatrix}
		2a & 2b & a \\
		-2b & 2a & -b \\
		-b & a & 0
	\end{pmatrix}.$
	As $\det(AT + TA) = 2b(a^2 + b^2)\not=0$, we get $\operatorname {asc}(AT + TA) =0$, a contradiction again.
	
	{\bf Case 2.} $\dim \mathcal X \geq 4$ and $\mathbb F=\mathbb C$.
	
	Let $p(\lambda) = \prod_{i=1}^{n} (\lambda - \lambda_i)^{\alpha_i}$ be the minimal polynomial of $A$  such that $p(A) = 0$. Here,  $\{\lambda_1, \lambda_2, \dots, \lambda_n\}$ is the spectrum of $A$.  By \cite{VM2007} and \cite[Theorem V.11.3]{DCL1980},
	$\mathcal X$ has a space decomposition
	$\mathcal X = \bigoplus_{i=1}^{n} \operatorname{ker}((A - \lambda_i)^{\alpha_i})$
	and  $A$  has a matrix representation
	$$A = \operatorname{diag}\{J_1,J_2,\ldots,J_n\},$$
	where $J_i$ is the Jordan block matrix  on $\operatorname{ker}((T - \lambda_i)^{\alpha_i})$ with
	$$J_i= \operatorname{diag}\{J_{r_{i1}}(\lambda_i),J_{r_{i2}}(\lambda_i)\ldots,J_{r_{i{k_i}}}(\lambda_i)\},$$
	$\dim\operatorname{ker}((T - \lambda_i)^{\alpha_i})= \sum_{j=1}^{k_i}r_{ij}$, and  $r_{ij}$ , $k_i$ are the dimension and the number of Jordan blocks for eigenvalue $\lambda_i$, respectively.

	{\bf Subcase 2.1.}   $\max\{r_{ij}\} \geq 3$.
	
	As $\dim \mathcal X\geq 4$,  under a similarity transformation, we may assume that $A$ has the form
	$$A= \begin{pmatrix}
		A_{11}&A_{12}\\
		0&A_{22}
	\end{pmatrix} \text{ with }  A_{11}=  J_4( \gamma) \text{ or }  \operatorname{diag}\{J_3( \gamma), \mu\}    \in\mathcal M_4(\mathbb C),$$
	where $\gamma, \mu \in \sigma(A) $.
	Take an  operator $T =T_1 \oplus 0 $ with $T_1= E_{31}+E_{42}\in \mathcal F_3(\mathcal X)$, and then
	$$A_{11}T_1+ T_1A_{11}= \begin{pmatrix}
		0 & 0 & 0 & 0 \\
		1 & 0 & 0 & 0 \\
		2\gamma & 1 & 0 & 0 \\
		0 & 2\gamma & 1 & 0
	\end{pmatrix}  \text{ or }  \begin{pmatrix}
		0 & 0 & 0 & 0 \\
		1 & 0 & 0 & 0 \\
		2 \gamma& 1 & 0 & 0 \\
		0 & \gamma + \mu & 1 & 0
	\end{pmatrix} .$$
	It follows from Proposition \ref{p1}(6) and  Proposition \ref{op ma} that  $\operatorname{asc}(AT + TA) \geq \operatorname{asc}(A_{11}T_1+ T_1A_{11}) = 4$,  a contradiction.

	{\bf Subcase 2.2.}  $\max\{r_{ij}\}= 2$.
	
	If there exist at least two pairs $(i,j)$ and $(i',j')$ such that  $r_{ij}=r_{i'j'}=2$, then
	$A$ at least contains two  $2 \times 2 $ Jordan blocks.
	Without loss of generality, assume  $r_{11}= 2$.   Reordering the Jordan blocks in $A$,
	we may write
	$$A=\operatorname{diag}\{J_2(\mu_1),J_2(\mu_2)\} \oplus A_1 \ \ {\rm with} \ \ \mu_1, \mu_2\in \sigma(A).$$
	By taking
	$$T=\begin{cases}
		J_4(0) \oplus 0 &{\rm if}\  \mu_1 \ne 0, \mu_2 \ne 0, \mu_1 +\mu_2 \ne 0; \\
		(E_{23}+E_{31}) \oplus 0 &{\rm if}\ \mu_1 \ne 0, \mu_2 \ne 0, \mu_1 +\mu_2 = 0; \\
		( E_{14}+E_{23}+E_{42}) \oplus 0&{\rm if}\ \mu_1 = \mu_2 = 0;\\
		(E_{13}+E_{24}+E_{32})\oplus 0 &{\rm if}\ \mu_1 = 0, \mu_2 \ne 0 \text{ or }\ \mu_1\not=0,\ \mu_2=0,
	\end{cases}$$
	a direct calculation yields  that $AT+TA$ is one of the three forms:
	$(2\mu_1E_{12}+E_{13}+(\mu_1+\mu_2)E_{23}+E_{24}+2\mu_2E_{34}) \oplus 0$, $( E_{13}+E_{24}+E_{32}) \oplus 0$ and
	$(\mu_1+\mu_2)(E_{13}+E_{24}+E_{32}+\frac{2}{\mu_1+ \mu_2}E_{14}) \oplus 0$.
	It follows that $AT+TA$ is a $4$-nilpotent operator. So $\operatorname{asc}(AT+TA) =4$, a contradiction.
	
	If   $r_{ij}=2$ for  exactly one pair  $(i,j)$ and $r_{kl}=1$ for all other pairs  $(k,l)$, then $A$ is composed of one $2 \times 2 $ Jordan block and some $1 \times 1 $ Jordan blocks. Still, we may write
	$$A= \operatorname{diag}\{J_2(\xi_1),\xi_2,\xi_3\} \oplus A_3 \ \ {\rm with} \ \ \xi_1, \xi_2, \xi_3\in \sigma(A).$$
	
	If $\xi_1=0$, then $A$ must contain at least one non-zero $1 \times 1 $ Jordan block as  $\operatorname{rank}(A) \geq 2$. Without loss of generality, we assume  $\xi_2\not=0$.
	By taking
	$$T= \begin{pmatrix}
		1 & 0 & 0 & 0 \\
		0 & 0 & 0 & 0 \\
		\xi_2 & -1 & 0 & 0 \\
		0 & 0 & 1 & 0
	\end{pmatrix} \oplus 0 \text{ for } \xi_3=0  \text{ and } T= \begin{pmatrix}
		0 & 0 & -1 & 0 \\
		0 & 1 & \xi_2 & 0 \\
		0 & 0 & 0 & 0 \\
		\xi_3 & -1 & 0 & 0
	\end{pmatrix} \oplus 0 \text{ for } \xi_3 \neq 0,$$
	one can easily check that  $AT+TA$ is  $(E_{12}+ \xi_2^2E_{31}+ \xi_2E_{43})\oplus 0$ or $(E_{12}+ \xi_2^2E_{23}+\xi_3^2E_{41}) \oplus 0$, which implies  $\operatorname{asc}(AT+TA) =4 $, a contradiction.
	
	If $\xi_1 \not=0$, then at least one $1 \times 1 $ Jordan block is zero because $A$ is not invertible.  Still,  we assume  $\xi_2=0$. By  taking
	$$T= \begin{pmatrix}
		0 & 1 & -1 & 0 \\
		0 & 0 & \xi_1 & 0 \\
		0 & 0 & 0 & 0 \\
		\xi_1 & -1 & 0 & 0
	\end{pmatrix} \oplus 0 \text{ for } \xi_3=0  \text{ and } T= \begin{pmatrix}
		0 & 1 & -1 & 0 \\
		0 & 0 & \xi_1 & 0 \\
		0 & 0 & 0 & 1 \\
		0 & 0 & 0 & 0
	\end{pmatrix} \oplus 0 \text{ for } \xi_3 \neq 0,$$
	one can easily check that  $AT+TA$ is  $(2\xi_1E_{12}+ \xi_1^2E_{23}+ \xi_1^2E_{41})\oplus 0$ or $(2\xi_1E_{12}+ \xi_1^2E_{23}+ \xi_3E_{34})\oplus 0$, which implies  $\operatorname{asc}(AT+TA) =4 $, a contradiction again.
	
	{\bf Subcase 2.3.}  $\max\{r_{ij}\}= 1$.
	
	In this case,  $A$ is a diagonal operator.
	Since $A$ is not invertible and $\operatorname{rank}(A) \geq 2$, we may  without loss of generality assume that
	$$A=\operatorname{diag}(\eta_1,\eta_2, \eta_3, 0) \oplus A_4\ \ {\rm with}\ \ \eta_1, \eta_2, \eta_3 \in \sigma(A) \ \ {\rm and}\ \ \eta_1, \eta_2\not=0.$$

	If  $\eta_3 = 0$, by taking $T=(E_{14}+E_{31}+E_{42})  \oplus 0$,
	one can easy get $AT+TA=(\eta_1E_{14}+ \eta_1E_{31}+\eta _2E_{42})  \oplus 0$. It follows that $\operatorname{asc}(AT+TA) =4 $, a contradiction.
	
	If  $\eta_3  \neq 0$, then at least one of $\eta_1+\eta_2$,  $\eta_1+\eta_3$ and $\eta_2+\eta_3$ is non-zero; otherwise,  $\eta_1=\eta_2=\eta_3=0$,   a contradiction. Without loss of generality, let $\eta_1+\eta_2 \not=0$. By taking $T=(E_{12}+E_{24}+E_{43})  \oplus 0$, one has $AT+TA=((\eta_1+\eta_2)E_{12}+\eta_2E_{24}+\eta_3E_{43})  \oplus 0$, which implies $\operatorname{asc}(AT+TA) =4 $. This is a contradiction .

	{\bf Case 3.}  $\dim \mathcal X \geq 4$ and $\mathbb F=\mathbb R$.
	
	If $\sigma(A)$ is real, by the similar   discussion to that of Case 2, rank$(A)\geq 2$ implies that there exists some $T \in \mathcal F_3(\mathcal X)$ such that $\operatorname {asc}(AT + TA) \geq 4$, a contradiction. If $\sigma(A)$ is not real, then it contains at least  a pair of complex conjugate eigenvalues $a \pm i b$ with $b > 0$ and the minimal polynomial of $A$ can be written as
	$$p(\lambda) = \lambda^{\alpha}(\lambda^2 - 2a\lambda + (a^2 + b^2))^{\beta}p_1(\lambda)$$
	with $\alpha\geq 1$ since $0\in\sigma(A)$ and $\beta\geq 1$ .
	In this case, $\mathcal X$ has a space decomposition $\mathcal X =  \operatorname{ker}(A^{\alpha}) \oplus  \operatorname{ker}((A^2 - 2aA + (a^2 + b^2)I)^{\beta}) \oplus \operatorname{ker}(p_1(A))$, and  by \cite[pp.201-202]{RA2013},
	$A$ has a matrix representation
	$$A = J \oplus R \oplus A',$$
	where $A' = A|_{\operatorname{ker}(p_1(A))}$,
	$J = \operatorname{diag}\bigl\{J_{r_{1}}(0), J_{r_{2}}(0), \ldots, J_{r_{k}}(0)\bigr\}$,
	$R = \operatorname{diag}\bigl\{R_{t_{1}}(a, b), \ldots, R_{t_{j}}(a, b)\bigr\}$,
	$R_{t_j}(a, b)$ is a $2{t_j} \times 2{t_j}$ matrix
	\[
	R_{t_j}(a, b) =
	\begin{pmatrix}
		C & I_2 & & & \\
		& C & I_2 & & \\
		& & \ddots & I_2 & \\
		& & & C
	\end{pmatrix} \text{ with } C = \begin{pmatrix} a & b \\ -b & a \end{pmatrix} \text{ and }I_2 = \begin{pmatrix} 1 & 0 \\ 0 & 1 \end{pmatrix}.
	\]
	Moreover, $\dim \operatorname{ker}(A ^{\alpha}) = \sum_{i=1}^{k} r_{i} \geq 1$ and $\dim \operatorname{ker}((A^2 - 2a A + (a^2 + b^2)I)^{\beta}) = \sum_{l=1}^{j} 2t_{l}\geq 2$.
	
	{\bf Subcase 3.1.} $\max\{r_i\} \geq 3$.
	
	As $\dim \mathcal X\geq 4$,   adjusting the positions of the  Jordan blocks in  $A$, then $A$ can be written as a $2\times2$ upper triangular operator matrix, where the $(1, 1)$-entry is  $J_4(0) $ or $\operatorname{diag} \{J_3(0), C\} $.
	Similar to Subcase 2.1, we can take an operator  $T\in \mathcal F_3(\mathcal X)$ such that $\operatorname{asc}(AT + TA) \geq 4$, a contradiction again.

	{\bf Subcase 3.2.}  $\max\{r_i\}=2$.
	
	Without loss of generality,  rewrite $A$ as
	$$A= \begin{pmatrix}
		B_{1}&B_{2}\\
		0&B_{4}
	\end{pmatrix} \text{ with } B_{1}= \operatorname{diag} \{J_2(0), C\}\in\mathcal M_4(\mathbb R).$$
	Taking
	$T= \begin{pmatrix}
		T_1&0 \\
		0&0
	\end{pmatrix}$ with
	$ T_1= \begin{pmatrix}
		1 & 0 & \frac{b^2 - a^2}{(a^2 + b^2)^2} & \frac{2ab}{(a^2 + b^2)^2} \\
		0 & 0 & \frac{a}{a^2 + b^2} & -\frac{b}{a^2 + b^2} \\
		0 & 0 & \frac{b}{4(a^2 + b^2)} & \frac{2a^2 + b^2}{4a(a^2 + b^2)} \\
		0 & 0 & \frac{b^2}{4a(a^2 + b^2)} & \frac{b}{4(a^2 + b^2)}
	\end{pmatrix}$ gives  $B_1T_1+T_1B_1=J_4(0)$. It follows  from Proposition \ref{op ma} that   $\operatorname{asc}(AT + TA) \geq \operatorname{asc}(B_1T_1+T_1B_1) = 4$ , a contradiction.

	{\bf Subcase 3.3.}   $r_{i}=1$ for all $i$.
	
	Under a similarity transformation, we can write
	$$A= \operatorname{diag}(0,0,\ldots,0) \oplus R \oplus A''.$$
	
	Write  $p=\dim \operatorname{ker}A ^{\alpha} = \sum_{i=1}^{k} r_{i} \geq 1$ and $q=\dim \operatorname{ker}(A^2 - 2a A + (a^2 + b^2)I)^{\beta} = \sum_{l=1}^{j} 2t_{l} \geq 2$.

	If $ q \geq 4$ and $\max\{t_l\}=2$, adjusting the positions of the real Jordan blocks in  $R$,  $A$ can be written as
	$$A= \begin{pmatrix}
		B_{1}&B_{2}\\
		0&B_{4}
	\end{pmatrix} \text{ with } B_{1}=\operatorname{diag}\left\{0,  \begin{pmatrix}
		C&I_{2}\\
		0&C
	\end{pmatrix} \right \}\in\mathcal M_5(\mathbb R) .$$
	In this case,  take  $T= \begin{pmatrix}
		T_1&0 \\
		0&0
	\end{pmatrix}$ with
	$$T_1= \begin{pmatrix}
		0 & \frac{4a^2}{w} & -\frac{4ab}{w} & -\frac{4a}{w}&  0 \\
		0 & \frac{ab}{w} & \frac{2a^2+b^2}{w} & \frac{u}{w(4a^2+b^2)} & 0\\
		0 & \frac{b^2}{w} & \frac{b}{w} & \frac{k}{w(4a^2+b^2)} &0 \\
		0 & 0 & 0 & 0&0\\
		0 & 0 & 0 & 0&0
	\end{pmatrix} \text{ if } a\not=0 \text{ and  }T_1=\begin{pmatrix}
		0 & 0 & 0 & 0 & 0 \\
		0 & 1 & 0 & 0 & 0 \\
		0 & 0 & 1 & 0 & 1 \\
		0 & 0 & 2b & 0 & 0 \\
		0 & 2b & 0 & 0 & 2
	\end{pmatrix} \text{ if } a=0,$$ where $w=4a(a^2+b^2),u=b(b^2-2a^2-4a^3-4ab^2),k=8a^2(a^2+b^2)-3b^2$. 
	Note that the rank of $T_1$ is not greater than 3. By a direct computation, one has  $$B_1T_1+T_1B_1= \begin{pmatrix}
		0 & 1 & 0 & 0 & -\frac{2b}{a^2+b^2} \\
		0 & 0 & 1 & 0 & m \\
		0 & 0 & 0 & 1 & n \\
		0 & 0 & 0 & 0 & 0 \\
		0 & 0 & 0 & 0 & 0
	\end{pmatrix} \text{ or } \begin{pmatrix}
		0 & 0 & 0 & 0 & 0 \\
		0 & 0 & 4b & 1 & b \\
		0 & 0 & 0 & -b & 3 \\
		0 & 0 & 0 & 0 & 4b \\
		0 & 0 & 0 & 0 & 0
	\end{pmatrix},$$ where $m=\frac{8a^4+4a^2b^2+2b^4-4a^3b^2-4ab^4}{4a(a^2+b^2)(4a^2+b^2)}$ and $n=\frac{b(8a^3+4a^2+8ab^2-2b^2)}{4(a^2+b^2)(4a^2+b^2)}$.
	It follows that   $\operatorname{asc}(AT + TA) \geq \operatorname{asc}(B_1T_1+T_1B_1)\geq4$ by Proposition \ref{p1}(6) and  Proposition \ref{op ma}, a contradiction.
	
	If $q \geq 4$ and  $t_j=1$ for all $j$, then
	$$A= \operatorname{diag}\{0,C,C\} \oplus A_1.$$
	
	Let \begin{equation} S_{\beta}= \begin{pmatrix}
			0&1&0&0\\
			0&0&0&-b\\
			1&0&0&\beta\\
			0&0&0&0
		\end{pmatrix},\end{equation}
	\begin{equation} s_1=\frac{a^2+b^2}{a},\ \ s_2=\frac{4a(a^2+b^2)}{2a^2+b^2},\ \ s_{\beta}=-\frac{(\beta+a)^2+b^2}{b} \end{equation}  and
	\begin{equation}  U_{\beta}=\begin{pmatrix}
			0 & 1 &  - \frac{b}{a}& 0  \\
			0 & \frac{ab}{b^2 + 2a^2} & 1 & 1  \\
			0 & \frac{b^2}{b^2 + 2a^2} & \frac{ab}{b^2 + 2a^2} & -\frac{a+\beta}{b} \\
			0 & 0 & 0 & 0 \\
		\end{pmatrix}.\end{equation}
	Take	$T=T_2 \oplus 0
	$ with
	$T_2=\operatorname{diag} \{U_a, 0\} $  if $ a\not=0$;
	$T_2=	 \operatorname{diag} \{S_0, 0\}$  if $ a=0$.
	Then, one gets  $$AT+TA= \begin{pmatrix}
		0 & s_1 & 0 & 0 & 0 \\
		0 & 0 & s_2 & 0 & b \\
		0 & 0 & 0 & s_a & -2a \\
		0 & 0 & 0 & 0 & 0 \\
		0 & 0 & 0 & 0 & 0
	\end{pmatrix} \oplus 0  \text{ or }   \begin{pmatrix}
		0 & 0 & b& 0 & 0 \\
		b & 0 & 0 & 0& -b^2  \\
		0 & 0 & 0 &b^2 & 0  \\
		0 & 0 & 0 & 0& 0 \\
		0 & 0 & 0 & 0& 0
	\end{pmatrix}\oplus 0 .$$
	This yields  $\operatorname{asc}(AT + TA) \geq 4$ by Proposition \ref{op ma}, a contradiction.
	
	If $ q=2$ and $p \geq 2$,  then
	$$A=  \operatorname{diag} \{0,0,C \}  \oplus  A_3.$$
	Take $T = \begin{pmatrix}
		0 & 0 & \frac{ab}{(a^2+b^2)^2} & \frac{a^2}{(a^2+b^2)^2} \\
		0 & 0 & \frac{b^2}{(a^2+b^2)^2} & \frac{ab}{(a^2+b^2)^2} \\
		1 & 0 & 0 & 0 \\
		0 & 1 & 0 & 0
	\end{pmatrix} \oplus 0;$   then
	$AT+TA=
	{\begin{pmatrix} 0 & 0 & 0 & \frac{a}{a^2+b^2} \\ 0 & 0 & 0 & \frac{b}{a^2+b^2} \\ a & b & 0 & 0 \\ -b & a & 0 & 0 \end{pmatrix}}\oplus 0.$
	By a direct computation, one has  $(AT+TA)^3 \neq 0$ and $(AT+TA)^4 = 0$. It follows from Proposition \ref{p1}(6)  that  $\operatorname{asc}(AT + TA)= 4$, a contradiction.
	
	If $ q=2$ and $p=1$,  then $A=0 \oplus C \oplus A'$. In this case, under a similarity transformation, A can also be written as a $2\times2$ upper triangular operator matrix, where the (1,1)-entry is  $ \operatorname{diag}\{0,C,R_1(c, d)\}$ or $\operatorname{diag}\{0,C, \mu\}$ with $c\pm id,\mu \in \sigma(A') $. Similarly, we may use $S_{\beta}$ defined in Eq.(4.1) and $U_{\xi}$ defined in Eq.(4.3) to construct an operator  $T \in \mathcal F_3 (\mathcal X)$ satisfying
	$ \operatorname{asc}(AT+TA) \geq 4$, a contradiction again.
	
	Combining the all above cases, we complete the  proof of (3) $\Rightarrow$ (1).	\end{proof}

\begin{proof}
	
	(1) $\Rightarrow$ (2):  By  Proposition \ref{RP1}-\ref{N1}, it is obvious that $\operatorname{asc}([A,T]) \in\{ {1,2,3} \}$ for all $T \in \mathcal B(\mathcal X)$.

	(2) $\Rightarrow$ (1):  Assume that $A \notin \mathcal F_1(\mathcal X)+\mathbb FI$.  We will get a contradiciton.
	
	By  Proposition \ref{FI} -\ref{al O}, one gets $A \notin \mathbb FI$ and  $A$ is  an algebraic operator.

	In the sequel, we check (3) $\Rightarrow$ (1)  by considering two cases.

	{\bf Case 1.} $\dim \mathcal X =3$.
	
	In this case, $\mathcal B(\mathcal X) \cong \mathcal M_3(\mathbb F)$.  
	If $\mathbb F=\mathbb C$ or $\mathbb F=\mathbb R$ with $\sigma(A) \subset \mathbb R$, then $A$ is clearly similar to one of the following matrices:
	\[
	J_3(\lambda),\quad \operatorname{diag}\{J_2(\lambda),\mu\},\quad \operatorname{diag}(\lambda,\mu,\nu),
	\]
	where $\lambda,\mu,\nu \in \mathbb F$.  
	For any invertible $S \in \mathcal B(\mathcal X)$, Proposition~\ref{p1}(2) gives
	\begin{align}\label{S}
		\operatorname{asc}([A,T]) = \operatorname{asc}([S^{-1}AS, S^{-1}TS])\quad\text{for all } T \in \mathcal B(\mathcal X),
	\end{align}
	so we may assume without loss of generality that $A$ itself is one of the three forms above.  
	Because $A\notin \mathbb{F}I$ and $A\notin \mathcal{F}_1(\mathcal{X})+\mathbb{F}I$, the scalars $\lambda,\mu,\nu$ are pairwise distinct.  Set  
	\[
	T = 
	\begin{cases}
		E_{31}+E_{12}, & \text{if } A = J_3(\lambda),\\[2pt]
		E_{13}+E_{21}+E_{31}, & \text{if } A = \operatorname{diag}\{J_2(\lambda),\mu\},\\[2pt]
		E_{13}+E_{21}+E_{32}, & \text{if } A = \operatorname{diag}(\lambda,\mu,\nu).
	\end{cases}
	\]  
	In each case one readily checks that $[A,T]$ is invertible, so $\operatorname{asc}([A,T])=0$, a contradiction.

	
	

	Now assume that $\mathbb F=\mathbb R$ and $\sigma(A)$ is not real.
	
	Set $R_j(\alpha, \beta)$ is a $2j \times 2j$ matrix
	\begin{align}\label{R}
		R_j(\alpha, \beta) =
		\begin{pmatrix}
			C_{\alpha, \beta} & I_2 & & & \\
			&  C_{\alpha, \beta} & I_2 & & \\
			& & \ddots & I_2 & \\
			& & &  C_{\alpha, \beta}
		\end{pmatrix} \text{ with } C_{\alpha, \beta}= \begin{pmatrix} \alpha & \beta \\ -\beta & \alpha \end{pmatrix} \text{ and }I_2 = \begin{pmatrix} 1 & 0 \\ 0 & 1 \end{pmatrix}.
	\end{align}
	
	Under a similarity transformation,  we can write  $A=\operatorname{diag}\{C_{a, b}, \lambda \}$, where  $a \pm i b$ with $b > 0$.
	Take  $	T=
	\begin{pmatrix}
		0 & 1 & -b\\
		0 & 0 & a-\lambda\\
		-b & \lambda-a & 0
	\end{pmatrix}, \text{ and then }
	[A,T]=
	\begin{pmatrix}
		b & 0 & 0\\
		0 & -b & t\\
		0 & t & 0
	\end{pmatrix}$ with $ t=b^{2}+(\lambda-a)^{2}.$
	Since $b>0$ and $t>0$, $\operatorname{det}([A,T])=-b t^{2}\neq0$; thus $[A,T]$ is invertible and $\operatorname{asc}([A,T])=0$, again contradicting (3).

	{\bf Case 2.} $\dim \mathcal X \geq 4$.
	
	{\bf Case 2.1.} $\mathbb F=\mathbb C$ or $\mathbb F=\mathbb R$ with $\sigma(A) \subset \mathbb R$.

	Let \(p(\lambda) = \prod_{i=1}^{n} (\lambda - \lambda_i)^{\alpha_i}\) be the minimal polynomial of \(A\), so that \(p(A)=0\), where \(\{\lambda_1, \lambda_2, \dots, \lambda_n\}\) is the spectrum of \(A\).  
	By \cite{VM2007} and \cite[Theorem V.11.3]{DCL1980}, \(\mathcal X\) admits the decomposition  
	\[
	\mathcal X = \bigoplus_{i=1}^{n} \ker\bigl((A - \lambda_i)^{\alpha_i}\bigr).
	\]  
	Thus,  since \(\dim \mathcal X \ge 4\), we can find a \(4\)-dimensional \(A\)-invariant subspace \(\mathcal M\) together with an \(A\)-invariant complement \(\mathcal N\) such that  
	\[
	\mathcal X = \mathcal M \oplus \mathcal N,
	\]  
	and \(A|_{\mathcal M} = A_1\), where \(A_1 \in M_4(\mathbb F)\) is a direct sum of Jordan blocks \(J_{r_{ij}}(\lambda_i)\) with \(1 \le r_{ij} \le 4\).  
	Take \(T = T_1 \oplus 0\) satisfying \(\operatorname{asc}([A_1,T_1]) = 4\). Applying Proposition~\ref{...} yields  
	\[
	\operatorname{asc}([A,T]) \ge \operatorname{asc}([A_1,T_1]) = 4,
	\]  
	which is a contradiction.

	{\bf Case 2.1.}  $\max\{r_{ij}\}\geq 2 $.
	
	Adjust the opsition of Jordan blocks, we can always write $A_1 $ is either $J_4(\lambda)$ or $\operatorname{diag}\{J_3(\lambda), \mu\}$ for some $\lambda, \mu \in \sigma(A)$.
	
	If $A_1=J_4(\lambda)$, by taking $T_1=\operatorname{diag}(0,1,2,3)$, then $[A_1,T_1]=J_4(0)$. 
	
	For $A_1=\operatorname{diag}\{J_3( \lambda), \mu\} $,
	by taking	
	\[
	T_1 = 
	\begin{cases}
		E_{24} + E_{33} + E_{41}, & \lambda = \mu,\\[4pt]
		E_{14} + E_{22} + (\lambda - \mu)^2 E_{34} - (\lambda - \mu)E_{24}, & \lambda \neq \mu,
	\end{cases}
	\]
	a direct computation yields
	$[A_1, T_1]$ is 
	$E_{14} + E_{23} - E_{42}$ or   $E_{12} - E_{23} + (\lambda - \mu)^3 E_{34}$.
	It is easy to verify  $\operatorname{asc}([A_1,T_1]) = 4$.

	{\bf Case 2.2.} $\max\{r_{ij}\} =2$.
	
	Adjust the opsition of Jordan blocks, we can always write
	$A_1=\operatorname{diag}\{J_2(\lambda),J_2(\mu)\}$ or  $A = \operatorname{diag}\{J_2(\xi_1),\xi_2,\xi_3\}$ for some $..... \in \sigma(A)$.
	
	If $A_1=\operatorname{diag}\{J_2(\lambda),J_2(\mu)\}$,
	by taking	
	$$T_1=\begin{cases}
		E_{22}+E_{23}+E_{44}  &{\rm if}\ \mu_1 \ne \mu_2 ; \\
		E_{23}+E_{42} &{\rm if}\ \mu_1 = \mu_2,
	\end{cases}$$
	a direct calculation yields  $[A_1,T_1]$ is $J_4(0)+E_{13}+(\mu_1-\mu_2-1)E_{23}-E_{24}$ or $E_{13}-E_{24}+E_{32}$.
	It follows that $[A_1,T_1]$ is a $4$-nilpotent martix, and so $\operatorname{asc}([A_1,T_1]) =4$.

	For the other case, if $\sigma(A)$ is a singleton set, then $A \in \mathcal F_1(\mathcal X) + \mathbb F I$, a contradiction. Hence $A$ has at least two distinct eigenvalues, and by Eq.~\eqref{S} we may assume without loss of generality that $A_1$ has  at least two distinct eigenvalues.
	Take
	$$T=\begin{cases}
		E_{22}+E_{23}+E_{34}, & \text{if } \xi_1 \neq \xi_2,\ \xi_2 \neq \xi_3,\\
		E_{14}+E_{31}+E_{43}, & \text{if } \xi_1 = \xi_2,\ \xi_2 \neq \xi_3,\\
		E_{11}+E_{23}-E_{42}+(\xi_3-\xi_1)E_{41}, & \text{if } \xi_1 \neq \xi_2,\ \xi_2 = \xi_3,
	\end{cases}$$
	one can easily check that $[A,T]$ is one of $$\begin{pmatrix}
		0 & 1 & 1& 0  \\
		0 & 0 & \xi_1-\xi_2 & 0  \\
		0 & 0 & 0 & \xi_2-\xi_3   \\
		0 & 0 & 0 & 0  \\
	\end{pmatrix}, \begin{pmatrix}
		0 & 0 & 0&\xi_1-\xi_3   \\
		0 & 0 & 0& 0  \\
		0 & 1 & 0 & 0   \\
		0 & 0 & \xi_3-\xi_1  & 0  \\
	\end{pmatrix}  \text{ and } \begin{pmatrix}
		0 & -1 & 1& 0  \\
		0 & 0 & \xi_1-\xi_3 & 0  \\
		0 & 0 & 0 & 0   \\
		(\xi_1-\xi_3)^2 & 0 & 0 & 0  \\
	\end{pmatrix},$$  which implies  $\operatorname{asc}([A,T]) =4 $, a contradiction.
	
	{\bf Case 2.3.} $\max\{r_{ij}\} =1$.
	
	In this case, $A$ is a diagnal operator.
	Since  $A \notin \mathcal F_1(\mathcal X)+\mathbb FI$ and $A \notin \mathbb FI$, 
	$A$ has at least two distinct eigenvalues, and if exactly two, then each has multiplicity $\ge 2$.
	Still, we assume that 	$A_1=\operatorname{diag}(\xi_1,\xi_2,\xi_3,\xi_4)$ has at least two distinct eigenvalues $\xi_1,\xi_2$, and if exactly two, then each has multiplicity $\ge 2$.
	By taking
	$$T=E_{22}+E_{23}+E_{34} \text{ if } \mu=\lambda_3 \text{ and }
	E_{14}+E_{31}+E_{43} \text{ if } \mu\not=\lambda_3,$$

	{\bf Case 2.2.}  $\mathbb F=\mathbb R$ with $\sigma(A) \not\subset \mathbb R$.
	
	In this case, $A$ contains at least  a pair of complex conjugate eigenvalues $a_1 \pm i b_1$ with $b_1 > 0$. Then  the minimal polynomial of $A$ is
	$$p(\lambda) = (\lambda^2 - 2a\lambda + (a_1^2 + b_1^2))^{\beta}p_1(\lambda)$$
	with  $\beta\geq 1$ .
	In this case, $\mathcal X$ has a space decomposition $$\mathcal X =  \operatorname{ker}((A^2 - 2a_1A + (a_1^2 + b_1^2)I)^{\beta}) \oplus \operatorname{ker}(p_1(A)),$$ and  by \cite[pp.201-202]{RA2013},
	$A$ has a matrix representation
	$$A = R \oplus A',$$
	where $R$ constist of $R_j(a_1, b_1)$ stated in Eq.\ref{R} and  $A' = A|_{\operatorname{ker}(p_1(A))}$,

	{\bf Case 2.2.2.}  $\beta \geq 2$.
	
	$A$ can be written as a $2\times2$ upper triangular operator matrix, where the $(1, 1)$-entry is  $R_2(a, b)$.
	Take $$T= \begin{pmatrix}
		T_1 & 0 \\
		0   & 0
	\end{pmatrix} \text{ with } \begin{pmatrix}
		0 & 1 & 0 & 0 \\
		0 & 0 & 0 & 1 \\
		0 & b & 0 & 1 \\
		-b & 0 & 0 & 0
	\end{pmatrix}$$    
	and then	
	$$[R_2(a, b),T_1]= \begin{pmatrix}
		b & b & 0 & b \\
		-b & -b & b & 0 \\
		0 & 0 & b & -b \\
		0 & 0 & b & -b
	\end{pmatrix}.$$    
	It follows that $[R_2(a, b),T_1]$ is a $4$-nilpotent martix, and so $\operatorname{asc}([A_1,T_1]) =4$.

	{\bf Case 2.2.2.}  $\beta  =1$.
	
	Obviously, $A'$ not contains the other  $R_j(c, d)$($j \geq 2$)  and $J_k(\lambda )$($k \geq 4$), otherwise, by  Case 2.1 and Case 2.2.1, there exists  $T$  such that  $\operatorname{asc}([A,T]) \geq 4$, a contradiction. 
	Thus,    $A$ consits of $R_2(a_i, b_i )$, $J_2(\mu_i )$ or $\beta_i$ , where $a_i, b_i,\mu_i,\beta_i \in \mathbb R$.
	
	If $A$ consits of $R_2(a_1, b_1 )$, $J_2(\mu_i )$ or $\beta_i$,  since \(\dim \mathcal X \ge 4\), similar to  Case 2.1.
	we can find a \(4\)-dimensional \(A\)-invariant subspace \(\mathcal M\) such that  
	\(A|_{\mathcal M} = A_1\) is  one of the following matrix:
	$$  \operatorname{diag}\{R_2(a, b),J_2(\lambda)\}  \quad \operatorname{diag}\{R_2(a, b),\lambda, \mu, \}$$
	
	For $\operatorname{diag}\{R_2(a_1, b_1),J_2(\mu_1)\} $, take

	$$T=
	\begin{pmatrix}
		0 & 0 & 0 & -\dfrac{b_1}{\Delta}\\[4pt]
		0 & 0 & 0 & \dfrac{a_1-\mu_1}{\Delta}\\[4pt]
		\dfrac{b_1^2-(a-\mu_1)^2}{\Delta^2}-\dfrac{b_1}{\Delta} & \dfrac{2b_1(a_1-\mu_1)}{\Delta^2}-\dfrac{a_1-\mu_1}{\Delta} & 0 & 0\\[4pt]
		-\dfrac{a_1-\mu_1}{\Delta} & \dfrac{b_1}{\Delta} & 0 & 0
	\end{pmatrix},\quad \Delta=(a_1-\mu_1)^2+b_1^2$$

	\[
	AT - TA = 
	\begin{pmatrix}
		0 & 0 & 0 & 0 \\
		0 & 0 & 0 & 1 \\
		0 & 1 & 0 & 0 \\
		1 & 0 & 0 & 0
	\end{pmatrix}
	\]

		\[
	T_1 = \begin{pmatrix}
		0 & 0 & 0 & -\dfrac{b_1}{m}\\
		0 & 0 & 0 & \dfrac{a_1-\beta_1}{m}\\
		\dfrac{b_1^{2}-(a_1-\beta_1)^{2}}{m^{2}}-\dfrac{b_1}{m} & \dfrac{2b_1(a_1-\beta_1)}{m^{2}}-\dfrac{a_1-\beta_1}{m} & 0 & 0\\
		-\dfrac{a_1-\beta_1}{m} & \dfrac{b_1}{m} & 0 & 0
	\end{pmatrix},\quad m=(a_1-\beta_1)^{2}+b_1^{2}.
	\]

	For $\operatorname{diag}\{R_2(a_1, b_1),\beta_1, \beta_2, \}$

	\[
	T=
	\begin{pmatrix}
		0 & 0 & 0 & \dfrac{a_1-\beta_2}{(a_1-\beta_2)^2+b_1^2} \\[8pt]
		0 & 0 & 0 & \dfrac{b_1}{(a_1-\beta_2)^2+b_1^2} \\[8pt]
		-\dfrac{a_1-\beta_1}{(a_1-\lambda_1)^2+b_1^2} & \dfrac{b_1}{(a_1-\lambda_1)^2+b_1^2} & 0 & 0 \\[8pt]
		-\dfrac{b_1}{(a_1-\beta_2)^2+b_1^2} & -\dfrac{a_1-\beta_2}{(a_1-\beta_2)^2+b_1^2} & 0 & 0
	\end{pmatrix}.
	\]
	
	If $A$ consits of  $R_2(a_i, b_i )$, there exist $T$ such that $\operatorname{asc}([A,T]) =0$.

	Combining the all above cases, this proof is finished.
\end{proof}

Assume that  $x \otimes f, y \otimes g\in \mathcal B(\mathcal X)$ are any rank-one operators.
Recall that $x \otimes f \sim y \otimes g$ if and only if either $x$ and $y$ are linearly dependent, or $f$ and $g$ are linearly dependent (\cite{RH2023}).   Obviously, $x \otimes f \sim y \otimes g$ if and only if  $x \otimes f $ and $y\otimes g$ have either the same range or the same kernel, and in turn, if and only if the rank of $x \otimes f + y \otimes g$ is not greater than 1.

The following proposition characterizes the equivalent relation $\sim$ between rank-1 nilpotent  operators by the ascent of Jordan product.

\begin{prop}\label{sim}
	Assume that  $A_1 , A_2 \in \mathcal N_1 (\mathcal X)$ are linearly independent. Then the following assertions are equivalent.
	
	{\rm (1)} $A_1 \sim A_2$.
	
	{\rm (2)} There exists a rank-one nilpotent operator $B\in\mathcal N_1(\mathcal X)$ with $B$  linearly independent of both $A_1$ and $A_2$, such that  for every $T\in \mathcal B(\mathcal X) $,  $\operatorname{asc}( TA_i+A_iT) \in \{2,3\}$ for both $i = 1, 2$ imply $  \operatorname{asc}(TB + BT) \in \{2,3\} $.
	
	{\rm (3)} There exists a rank-one nilpotent operator $B\in\mathcal N_1(\mathcal X)$ with $B$  linearly independent of both $A_1$ and $A_2$, such that  for every $T\in \mathcal F_3(\mathcal X) $,  $\operatorname{asc}( TA_i+A_iT) \in \{2,3\}$ for both $i = 1, 2$ imply $  \operatorname{asc}(TB + BT) \in \{2,3\} $.
\end{prop}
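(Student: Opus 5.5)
The plan is to reduce everything to the criterion in Proposition \ref{main2}(4). For $N=x\otimes f\in\mathcal N_1(\mathcal X)$ and any $T$, it says
$$\operatorname{asc}(TN+NT)\in\{2,3\}\iff Tx\otimes f+x\otimes T^*f\neq 0\ \text{ and }\ f(Tx)=0 .$$
So each hypothesis in (2) and (3) becomes a pair of conditions: a linear condition $f(Tx)=0$ and a nonvanishing condition. The implication (2) $\Rightarrow$ (3) is trivial. I will prove (1) $\Rightarrow$ (2) and (3) $\Rightarrow$ (1); the latter by contraposition, building rank $\le 3$ operators $T$ by hand.

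\emph{(1) $\Rightarrow$ (2).} If $A_1\sim A_2$, then by taking adjoints (Proposition \ref{p1}(8) interchanges ascent and descent, and these agree here by Proposition \ref{p1}(5)) it suffices to treat the common-range case. Write $A_1=x\otimes f_1$ and $A_2=x\otimes f_2$ with $f_1,f_2$ linearly independent and $f_i(x)=0$. The natural choice is $B=x\otimes(\alpha f_1+\beta f_2)$ with $\alpha\beta\neq 0$. This $B$ lies in $\mathcal N_1(\mathcal X)$, is independent of both $A_i$, and the linear conditions $f_i(Tx)=0$ for $i=1,2$ immediately give $(\alpha f_1+\beta f_2)(Tx)=0$.

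The real work is the nonvanishing condition $TB+BT\neq 0$. One has $TB+BT=0$ exactly when $Tx=\lambda x$ and $T^*(\alpha f_1+\beta f_2)=-\lambda(\alpha f_1+\beta f_2)$. I will analyse this degenerate situation directly. In it, $TA_1+A_1T=x\otimes g$ and $TA_2+A_2T=-\frac{\alpha}{\beta}\,x\otimes g$ with $g=T^*f_1+\lambda f_1$, so both $A_i$-products are nonzero, while the $B$-product is $0$ and has ascent $1$.

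I expect this to be the main obstacle. Such $T$ appear to exist for every fixed choice of $\alpha,\beta$, so a single $B$ of this shape may not be enough. I would first test whether the hypothesis on $A_1,A_2$ can be exploited further. If it cannot, I would try $B=y\otimes h$ built from a second vector $y$ with $h\in\operatorname{span}\{f_1,f_2\}$ and $h(y)=0$. If that also fails, the statement needs an extra restriction on $T$, and I would record this explicitly.

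\emph{(3) $\Rightarrow$ (1).} Suppose $A_1\not\sim A_2$, so $A_1=x_1\otimes f_1$ and $A_2=x_2\otimes f_2$ with $x_1,x_2$ linearly independent and $f_1,f_2$ linearly independent. Fix any $B=y\otimes g\in\mathcal N_1(\mathcal X)$ independent of both. I will construct $T\in\mathcal F_3(\mathcal X)$ satisfying:
\begin{itemize}
\item[(a)] $f_i(Tx_i)=0$ and $TA_i+A_iT\neq 0$ for $i=1,2$;
\item[(b)] $g(Ty)\neq 0$.
\end{itemize}
Condition (b) forces $\operatorname{asc}(TB+BT)=1$ by Proposition \ref{main2}(1), which contradicts (3).

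I will define $T$ on the span of $x_1,x_2,y$ (extended by $0$ on a complement via suitable functionals) and split into cases. If $y\notin\operatorname{span}\{x_1,x_2\}$, then $Ty$ can be prescribed freely, and I take $Tx_i$ in $\ker f_i$ but not in $[x_i]$, which ensures $TA_i+A_iT\neq 0$. If $y=\alpha x_1+\beta x_2$, then $g(Ty)=\alpha g(Tx_1)+\beta g(Tx_2)$, and the cases are:
\begin{itemize}
\item if $g\notin\operatorname{span}\{f_i\}$ for some $i$ with the corresponding coefficient of $y$ nonzero, choose $Tx_i\in\ker f_i\setminus\ker g$;
\item otherwise $g$ is a combination of $f_1,f_2$, and independence of $B$ from the $A_i$ together with $\dim\mathcal X\ge 3$ still leaves room to choose $Tx_1$ and $Tx_2$ appropriately.
\end{itemize}
The bound $\operatorname{rank}(T)\le 3$ holds automatically because $T$ is supported on at most three vectors.
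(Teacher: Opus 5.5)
You located the right obstacle in (1)$\Rightarrow$(2), but it cannot be overcome. For a pair with common range no admissible $B$ exists, so (1)$\Rightarrow$(3), and hence (1)$\Rightarrow$(2), is false, and your proposal cannot be completed.

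Let $A_1=x\otimes f_1$, $A_2=x\otimes f_2$, and let $B=y\otimes h\in\mathcal N_1(\mathcal X)$ be independent of both. By Proposition \ref{main2} it suffices to find a rank-one $T$ with $f_i(Tx)=0$ and $TA_i+A_iT\neq 0$ for $i=1,2$, but with $h(Ty)\neq 0$ or $TB+BT=0$.

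\emph{Case $y\notin[x]$.} Take $k\in\mathcal X^*$ with $k(x)=0$ and $k(y)=1$, a vector $v$ with $f_1(v)f_2(v)h(v)\neq 0$, and set $T=v\otimes k$. Then $TA_i+A_iT=f_i(v)\,x\otimes k$ is nonzero and nilpotent, while $h(Ty)=h(v)\neq 0$. This case also rules out your fallback $B$ built on a second vector.

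\emph{Case $y\in[x]$, $h\notin\operatorname{span}\{f_1,f_2\}$.} Take $w\in\ker f_1\cap\ker f_2$ with $h(w)\neq 0$, $k$ with $k(x)=1$, and set $T=w\otimes k$. Then $TA_i+A_iT=w\otimes f_i\neq 0$ and $h(Tx)=h(w)\neq 0$.

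\emph{Remaining case.} Here $B$ is a multiple of $x\otimes(\alpha f_1+\beta f_2)$ with $\alpha\beta\neq 0$. Take $g\neq 0$ with $g(x)=0$, $z$ with $f_1(z)=\beta$ and $f_2(z)=-\alpha$, and set $T=z\otimes g$. Then $TA_1+A_1T=\beta\,x\otimes g$ and $TA_2+A_2T=-\alpha\,x\otimes g$ both have ascent $2$. But $TB+BT=(\alpha\beta-\beta\alpha)\,x\otimes g=0$ has ascent $1$. This is exactly your degenerate situation with $\lambda=0$.

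The paper's own proof of (1)$\Rightarrow$(2) does not avoid this. It uses $B=x\otimes(f_1+f_2)$ or $B=x\otimes(2f_1+f_2)$ according to whether $T(x\otimes(f_1+f_2))+(x\otimes(f_1+f_2))T$ vanishes. So it chooses $B$ after $T$, which only proves ``for every $T$ there is a $B$''.

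That weaker reading does not rescue the proposition either. The paper's (3)$\Rightarrow$(1) argument picks $T$ after $B$, and with the quantifiers swapped the condition holds for every pair. For non-scalar $T\in\mathcal F_3(\mathcal X)$, choose $y$ outside the finitely many eigenspaces of $T$ and outside $[x_1]\cup[x_2]$, and $h\neq 0$ with $h(y)=h(Ty)=0$; then $B=y\otimes h$ works. For scalar $T$, any $B$ will do.

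So the correct outcome is the one you anticipated as a last resort: the statement is false as written and needs reformulation, not a cleverer $B$.

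Your (3)$\Rightarrow$(1) plan follows the paper's case split and is fine, with two small corrections. First, your ``otherwise'' subcase is empty: if $g\in[f_i]$ for every $i$ with nonzero coefficient in $y=\alpha x_1+\beta x_2$, then either $g\in[f_1]\cap[f_2]=\{0\}$ or $B$ is a multiple of some $A_i$. Second, when $\alpha\beta\neq 0$ you must rescale one $Tx_i$ to avoid cancellation in $\alpha g(Tx_1)+\beta g(Tx_2)$.
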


\begin{proof}
	(1) $\Rightarrow$ (2):   Suppose that  $A_1 \sim A_2$ and $\operatorname{asc}( TA_i+A_iT) \in \{2,3\}$ for all $T\in\mathcal B(\mathcal X)$, $i = 1, 2$.
	
	If  $A_1$ and $A_2$ have the same range,  then we may write $A_1 = x \otimes f_1$ and  $A_2 = x \otimes f_2 $, where  $0\not=x\in\mathcal X$ and $f_1,f_2\in\mathcal X^*$ are some linearly independent functionals.   By Proposition \ref{main2}, we get  $f_i(Tx)=0 $ and $Tx \otimes f_i+x \otimes T^*f_i\neq 0$, $ i = 1, 2$.  If $Tx \otimes (f_1+ f_2)+x \otimes (T^*f_1+T^*f_2) \neq 0$,
	by taking  $B=x\otimes(f_1+f_2)\in\mathcal N_1(\mathcal X)$, one gets  $(f_1+ f_2)(Tx)=0$ and $TB+BT \neq 0$, which and Proposition 2.6 yield  $\operatorname{asc}(TB + BT) \in \{2,3\} $.
	If $Tx \otimes (f_1+ f_2)+x \otimes (T^*f_1+T^*f_2)=0$, by taking $B=x\otimes(2f_1+f_2)\in\mathcal N_1(\mathcal X)$,  one has
	$(2f_1+ f_2)(Tx)=0 $ and $TB+BT \neq 0$.
	It follows from  Proposition \ref{main2} that   $\operatorname{asc}(TB+BT) \in \{2,3\}$.
	
	Similarly, one can show that, if $A_1$ and $A_2$ have the same kernel,  then (2) is also true.
	
	(2) $\Rightarrow$ (3): Obvious.
	
	(3) $\Rightarrow$ (1): Assume that (1) does not hold;  then we  may write $A_1 = x \otimes f$ and $A_2 = y \otimes g$ with $x,y$ as well as $f,g$  linearly independent.
	
	Next, we will show that for any   $B=z\otimes h\in\mathcal N_1(\mathcal X)$, there always exists an operator $T\in \mathcal B(\mathcal X) $ such that $\operatorname{asc}( TA_i+A_iT )  \in \{2,3\}$ for $i = 1, 2$, but $\operatorname{asc}(TB+BT) \notin \{2,3\}$. This is a contradiction.
	
	{\bf Case 1.}  $z \notin [x, y]$.
	
	In this case, $x$, $y$, $z$ are linearly independent. Since $f, g$ are linearly independent and $h \neq 0$, we may choose  $x_0, y_0, z_0\in\mathcal X$ with both $x$, $x_0$ and $y$, $y_0$ linearly independent such that $f(x_0) = g(y_0) = 0$ and $h(z_0) \neq 0$.
	Take an operator $T\in\mathcal B(\mathcal X)$ with rank not greater than 3 such that $Tx = x_0$, $Ty = y_0$, $Tz = z_0$. Then $f(Tx) = g(Ty) = 0$, $h(Tz) \neq 0 $ and $TA_i+ A_iT \neq 0, i=1, 2$.  By Proposition \ref{main2},  $\operatorname{asc}(A_iT + TA_i) \in \{2,3\}$ for $i = 1, 2$ and  $\operatorname{asc}(TB + BT) \notin \{2,3\}$ .
	
	{\bf Case 2.} $z \in [x, y]$ and $h \notin [f,g]$.
	
	Assume that $z=\beta x +\gamma y$, where $\beta,\gamma\in\mathbb F$ can not be zero simultaneously. It is clear that there exist non-zero  scalars $m_1,m_2\in\mathbb F$ such that $\beta m_1 +\gamma m_2 \neq 0 $.  For $f$, $g$, $h$, choose a vector $u_0 \in \mathcal{X}$  with both $x$, $u_0$ and $y$, $u_0$  linearly independent such that $h(u_0) \neq 0$ and $f(u_0) = g(u_0) = 0$.  Take an operator $T\in\mathcal B(\mathcal X)$ of rank not greater than 3  such that $Tx=m_1u_0$ and $Ty=m_2u_0$. Then $f(Tx) = g(Ty) = 0$, $h(Tz)=(\beta m_1 +\gamma m_2)h(u_0)\neq 0$ and $TA_i+ A_iT \neq 0, i=1, 2$. It follows from Proposition \ref{main2} again that $\operatorname{asc}(A_iT + TA_i) \in \{2,3\}$ for $i = 1, 2$ and $\operatorname{asc}(TB + BT) \notin \{2,3\}$.
	
	{\bf Case 3.} $z \in [x, y]$ and  $h \in [f,g]$.

	Let $z = \beta x + \gamma y$ and $h = \mu f + \nu g$ for some $\beta,\gamma,\mu,\nu\in\mathbb F$. In this case, $B = z \otimes h$ is neither a scalar multiple of $A_1$ nor a multiple of $A_2$, $z \neq 0$ and $h \neq 0$. So $\gamma \mu$ and $\beta \nu$ cannot be equal to zero  simultaneously.  Thus, there exist non-zero  scalars $m_1,m_2\in\mathbb F$ such that $\gamma \mu m_1  + \beta \nu m_2 \neq 0 $. In addition, as $f, g$ are linearly independent, there exist some  $x_0, y_0\in\mathcal X$ with both $x$, $x_0$ and $y$, $y_0$  linear independent such that $f(x_0) = g(y_0) =  0$, $g(x_0) =m_2$ and $f(y_0)=m_1 $.  Now,  take an operator $T\in\mathcal B(\mathcal X)$ of rank not greater than 3 such that  $Tx = x_0$, $Ty = y_0$. Then  $f(Tx) = g(Ty) = 0$, $h(Tz)=\gamma \mu m_1  + \beta \nu m_2 \neq 0$ and $TA_i+ A_iT \neq 0, i=1, 2$.  Still, by  Proposition \ref{main2}, one gets   $\operatorname{asc}(A_iT + TA_i) \in \{2,3\}$ for $i = 1, 2$, but $\operatorname{asc}(TB + BT) \notin \{2,3\}$.
	
	Combining the above discussion, we see that (3)$\Rightarrow$(1) is true.
	
	The proof is finished. 	
\end{proof}

The last  result gives a property about the ascent of Jordan product of operators.

\begin{prop}\label{de1}
	Assume that  $A, B \in \mathcal B(\mathcal X)\setminus\mathbb F I$. If  $\operatorname{asc}(AN + NA) \in \{2,3\} \Leftrightarrow \operatorname{asc}(BN + NB)  \in \{2,3\}$ holds for all $N \in  \mathcal N_1(\mathcal X)$, then $B=\lambda A+\mu I$ for some scalars $\lambda,\mu\in\mathbb F$.
\end{prop}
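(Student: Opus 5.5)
By Proposition \ref{main2}(4), for $N=x\otimes f\in\mathcal N_1(\mathcal X)$ we have $\operatorname{asc}(AN+NA)\in\{2,3\}$ if and only if $Ax\otimes f+x\otimes A^*f\neq 0$ and $f(Ax)=0$. The plan is to turn the hypothesis into the statement ``$f(Ax)=0\Leftrightarrow f(Bx)=0$ for all $x$ and all $f$ with $f(x)=0$'', with the degenerate set $\{AN+NA=0\}$ handled separately. From that statement, $Bx\in[x,Ax]$ for every $x$, and a standard local-linear-dependence argument then gives $B=\lambda A+\mu I$.

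\textbf{Step 1: the degenerate Jordan products.} Observe that $Ax\otimes f+x\otimes A^*f=0$ with $x,f\neq0$ forces $Ax=\alpha x$ and $A^*f=-\alpha f$. If $Ax=\alpha x$ and $f(x)=0$, then $f(Ax)=0$ automatically. So for such $x$ the condition $\operatorname{asc}(AN+NA)\in\{2,3\}$ holds exactly when $A^*f\neq -\alpha f$.

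\textbf{Step 2: $Bx\in[x,Ax]$ for every $x$.} Fix $x$ with $x,Ax$ linearly independent (such $x$ exists since $A\notin\mathbb FI$). Take any $f$ with $f(x)=f(Ax)=0$. Then $AN\neq -NA$ for $N=x\otimes f$, because the terms involving $Ax$ and $x$ cannot cancel. Hence $\operatorname{asc}(AN+NA)\in\{2,3\}$ by Proposition \ref{main2}(4), and so $\operatorname{asc}(BN+NB)\in\{2,3\}$, which gives $f(Bx)=0$. Since this holds for every $f\in\{x,Ax\}^\perp$, Hahn--Banach yields $Bx\in[x,Ax]$. The vectors $x$ with $x,Ax$ independent form a dense open set whenever $A\notin\mathbb FI$: if the set of $x$ with $Ax\in[x]$ had interior, $A$ would be scalar. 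The map $x\mapsto Bx$ is continuous, and the condition ``$Bx,x,Ax$ are dependent'' is closed under limits in the span sense. Rather than rely on this closure argument, I would instead handle the remaining eigenvectors directly: write $x=(x+y)-y$ with $y,x+y$ non-eigenvectors, and use linearity. By symmetry, $Ax\in[x,Bx]$ whenever $x,Bx$ are independent.

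\textbf{Step 3: from local to global.} Now $B$ is locally linearly dependent on $\{I,A\}$. For each non-eigenvector $x$, write $Bx=\lambda_x Ax+\mu_x x$. The standard argument compares $x$, $y$ and $x+y$ with $x,y,Ax,Ay$ independent:
\[\lambda_{x+y}(Ax+Ay)+\mu_{x+y}(x+y)=\lambda_xAx+\mu_xx+\lambda_yAy+\mu_yy,\]
which gives $\lambda_x=\lambda_y$ and $\mu_x=\mu_y$. When the span of $x,Ax,y,Ay$ is degenerate, pass through a third vector $z$. This shows $(\lambda_x,\mu_x)$ is constant on a dense set, so $B-\lambda A-\mu I$ vanishes on a dense set and hence vanishes everywhere.

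\textbf{Main obstacle.} The hard step is the degenerate configuration in which $A$ is locally of rank $2$ with respect to $I$, i.e.\ $\dim\operatorname{span}\{x,Ax,y,Ay\}\le3$ for all $x,y$. This happens when $A-\alpha I$ has rank one, so $A=\alpha I+u\otimes g$. In that case the comparison in Step 3 does not separate $\lambda$ from $\mu$, so I would treat it directly. The condition $f(Ax)=0$ with $f(x)=0$ reduces to $g(x)f(u)=0$, and the hypothesis then says $B$ has the same ``zero pattern''. From this I would deduce that $B=\beta I+\gamma u\otimes g$ by testing the hypothesis with $x\in\ker g$ and with $f\in u^\perp$, again using Step 1 for the cases with eigenvectors. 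Since $\dim\mathcal X\ge3$, enough room is available to choose the test vectors and functionals in each case.
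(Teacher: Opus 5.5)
There is a genuine gap in your local-to-global step. Step 3 needs pairs $x,y$ with $x,Ax,y,Ay$ linearly independent, either directly or through a third vector $z$. When $\dim\mathcal X=3$, which the proposition allows, no such pair exists for any $A$, so the comparison gives nothing. Your diagnosis is therefore false in this dimension: you claim that $\dim\operatorname{span}\{x,Ax,y,Ay\}\le 3$ for all $x,y$ happens only when $A-\alpha I$ has rank one, but in dimension $3$ it always happens. For example, $A=\operatorname{diag}(1,2,3)$ on $\mathbb F^3$ is not scalar plus rank one, and neither Step 3 nor your last paragraph covers it. The conclusion does hold there, but it needs a different argument. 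In $\dim\mathcal X\ge4$ your chaining can be repaired, but it also needs care when $A-\alpha I$ has rank two: for non-eigenvectors $x\in\operatorname{ran}(A-\alpha I)$, no $z$ makes $x,Ax,z,Az$ independent, so you must shrink the dense set.

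Second, your remark in Step 2 that eigenvectors can be handled by writing $x=(x+y)-y$ and using linearity is wrong. If $Ax=\alpha x$, linearity only gives $Bx\in\operatorname{span}\{x,y,Ay\}$. In fact, information at non-eigenvectors alone cannot suffice. On $\mathbb F^3$ take $A=E_{12}$ and $B=E_{13}$. Every $x$ with $x_2\neq0$ satisfies $Bx=(x_3/x_2)Ax\in[x,Ax]$, yet $Be_3=e_1\notin[e_3,Ae_3]$ and $B\notin\operatorname{span}\{I,A\}$. You do flag the scalar-plus-rank-one case, but you only announce that the hypothesis will be tested at eigenvectors, and that is where the work lies.

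The paper sidesteps both problems. It proves $Bx\in[x,Ax]$ for every $x$ directly from the hypothesis and then invokes \cite[Lemma~3.3]{XF2012}. That lemma needs the inclusion at all points, but no dense-set or dimension-dependent analysis. Your Step 2 is the paper's Case 1. For $Ax_0=\gamma x_0$ with $x_0,Bx_0$ independent (its Case 2), one of two things happens:
\begin{itemize}
\item It finds $h$ with $h(x_0)=0$, $h(Bx_0)\neq0$ and $A^*h\neq-\gamma h$. Then $N=x_0\otimes h$ gives $\operatorname{asc}(AN+NA)\in\{2,3\}$ but $\operatorname{asc}(BN+NB)\notin\{2,3\}$.
\item Otherwise $A^*h=-\gamma h$ on all of $[x_0,Bx_0]^\perp$. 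Any nonzero $h$ there, which exists because $\dim\mathcal X\ge3$, gives $AN+NA=0$ while $\operatorname{asc}(BN+NB)\in\{2,3\}$.
\end{itemize}
Supplying this eigenvector step, and then a local-dependence lemma that uses the inclusion everywhere, would close your proof.
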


The following corollary is useful.

\begin{cor}\label{de2}
	Assume that $A, B \in \mathcal N_1 (\mathcal X)$. Then the following assertions are equivalent.
	
	{\rm (1)} $A= \lambda B $ for some  nonzero  $\lambda \in\mathbb F$.
	
	{\rm (2)}  $\operatorname{asc}(AN+ NA)=2 \Leftrightarrow \operatorname{asc}(BN + NB)=2$  holds for all $N \in  \mathcal N_1(\mathcal X)$.
\end{cor}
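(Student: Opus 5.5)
The implication (1)$\Rightarrow$(2) is a direct computation. Write $B=x\otimes f$ with $f(x)=0$ and $A=\lambda B$, $\lambda\neq 0$. Then $AN+NA=\lambda(BN+NB)$ for every $N$, so Proposition \ref{p1}(1) gives $\operatorname{asc}(AN+NA)=\operatorname{asc}(BN+NB)$. This is slightly stronger than (2).

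For (2)$\Rightarrow$(1) I would work with $N=x\otimes f\in\mathcal N_1(\mathcal X)$ and use Proposition \ref{main2}(2), which says
\[
\operatorname{asc}(AN+NA)=2 \iff Ax\otimes f+x\otimes A^*f\neq 0,\ f(Ax)=0,\ f(A^2x)=0 .
\]
Since $A\in\mathcal N_1(\mathcal X)$, we have $A^2=0$, so the last condition holds automatically. Hence, for rank-one nilpotent $A$,
\[
\operatorname{asc}(AN+NA)=2 \iff AN+NA\neq 0 \text{ and } f(Ax)=0 .
\]
The same description holds for $B$.

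The key observation is that, for $N=x\otimes f$ with $f(x)=0$, one can test the condition $f(Ax)=0$ directly while keeping the nonvanishing condition under control. Write $A=u\otimes g$ and $B=v\otimes h$ with $g(u)=h(v)=0$. Then $f(Ax)=g(x)f(u)$, and
\[
AN+NA=g(x)\,u\otimes f+f(u)\,x\otimes g .
\]
The plan is to show that $[u]=[v]$ and $[g]=[h]$. Then $A$ and $B$ are proportional, which is (1).

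To prove $[u]=[v]$, argue by contradiction and suppose $u,v$ are linearly independent. Choose $x$ with $g(x)\neq 0$, $h(x)\neq 0$, and $x\notin[u,v]$; this is possible since $\dim\mathcal X\ge3$ and the exclusions are a finite union of proper subspaces (the field is infinite). Choose $f$ with $f(x)=0$, $f(v)=0$ and $f(u)\neq 0$, which is possible because $x,u,v$ are independent.
\begin{itemize}
\item For $B$: $f(Bx)=h(x)f(v)=0$. Also $BN+NB=h(x)\,v\otimes f\neq0$, since the second term vanishes when $f(v)=0$. So $\operatorname{asc}(BN+NB)=2$.
\item For $A$: $f(Ax)=g(x)f(u)\neq0$, so $\operatorname{asc}(AN+NA)\neq 2$.
\end{itemize}
This contradicts (2), so $[u]=[v]$. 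The dual argument, with the roles of vectors and functionals exchanged, gives $[g]=[h]$. Concretely, choose $f$ with $f(u)\neq0$ and $f\notin[g,h]$, then choose $x$ with $f(x)=0$, $h(x)=0$ and $g(x)\neq0$. This is possible when $g,h$ are independent, because $f,g,h$ are then independent.

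The main obstacle is making sure the nonvanishing condition $BN+NB\neq0$ holds in the chosen configuration, and the genericity choices above are exactly there to guarantee it. I expect the linear-algebra choices of $x$ and $f$ to be routine once $\dim\mathcal X\ge3$ is used. Once $[u]=[v]$ and $[g]=[h]$ are established, we may write $B=v\otimes h=\alpha\, u\otimes g=\alpha A$ with $\alpha\neq0$, and $\lambda=\alpha^{-1}$ gives $A=\lambda B$.
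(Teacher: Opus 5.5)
Your argument is correct, but it takes a different route from the paper. The paper proves (2)$\Rightarrow$(1) in two lines by reduction. Proposition \ref{two1} gives $\operatorname{asc}(AN+NA)\in\{1,2\}$ for rank-one $A,N$, so ``$=2$'' is the same as ``$\in\{2,3\}$''. Proposition \ref{de1} then applies; its proof rests on Proposition \ref{main2} and the local linear dependence lemma from \cite{XF2012}. It yields $B=\alpha A+\beta I$, and nilpotency forces $\beta=0$.

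You instead work directly with $A=u\otimes g$, $B=v\otimes h$ and test operators $N=x\otimes f$, using only Proposition \ref{main2}(2). Even that could be replaced by a one-line computation, since here $(AN+NA)^2=g(x)f(u)\,(AN+NA)$. Generic choices of $x$ and $f$ then separate first the ranges and then the kernels. The paper's route is economical inside the paper, because Proposition \ref{de1} is needed anyway for Theorem \ref{1}. Yours is self-contained and elementary. It needs neither the check $A,B\notin\mathbb F I$ required to invoke Proposition \ref{de1}, nor the argument eliminating the scalar term $\beta I$.

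One detail should be made explicit. In your dual step you impose $h(x)=0$, so $BN+NB=f(v)\,x\otimes h$, and you need $f(v)\neq 0$ to get $\operatorname{asc}(BN+NB)=2$. This does hold: you have already shown $v\in[u]\setminus\{0\}$, and you chose $f(u)\neq 0$. Alternatively, add $f(v)\neq0$ to the genericity conditions on $f$, which is the true mirror of $h(x)\neq0$ in the first step. As written, your remark that the genericity choices guarantee the nonvanishing condition does not cover this case, so state it.
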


\begin{proof}
	(2) $\Rightarrow$ (1):  By Proposition \ref{two1}, $\operatorname{asc}(AN+ NA)\in\{1,2\}$. So $\operatorname{asc}(AN+ NA)\in\{2,3\}$ if and only if $\operatorname{asc}(AN+ NA)=2$. Thus, the condition (2) in Proposition \ref{de1} is satisfied and consequently,  $N_2=\alpha N_1+\beta I$ for some $\alpha,\beta\in\mathbb F$. As $N_2$ is nilpotent, we must have $\beta=0$.
	
	(1) $\Rightarrow$ (2): Assume that  $A= \lambda B $ for some  nonzero  $\lambda \in\mathbb F$. By Proposition \ref{p1}(1), we have
	$$\begin{aligned}
		\operatorname{asc}(AN + NA)
		= \operatorname{asc}(\lambda BN  + \lambda NB)
		= \operatorname{asc}(BN+NB).
	\end{aligned}$$
	This finishes the proof.
\end{proof}

\begin{proof}[Proof of Proposition \ref{de1}]
	To prove the proposition, we only need to show that  $ Bx \in [x, Ax]$ holds for all $x \in \mathcal X$. Thus, by \cite[Lemma 3.3]{XF2012}, it holds that $B=\lambda A+\mu I$ for some scalars $\lambda,\mu\in\mathbb F$.
	
	In the sequel,  assume that there exists some $x_0\in\mathcal X$ such that $ Bx_0 \notin [x_0, Ax_0]$. We will obtain a contradiction by  considering two cases.
	
	{\bf Case 1.} $x_0$ and $ Ax_0$ are linearly independent.
	
	In this case,  $ x_0$, $Ax_0$ and $Bx_0$ are linearly independent. Choose $f_0 \in \mathcal X^* $ such that $f_0(x_0)=0$, $f_0(Ax_0)= 0$ and  $f(Bx_0) \neq 0$. It is obvious that $Ax_0 \otimes f_0+x_0 \otimes A^*f_0 \neq 0$. Let  $N=x_0 \otimes f_0\in\mathcal N_1(\mathcal X)$. Then  $\operatorname{asc}(AN + NA)\in \{2,3\}$ and  $\operatorname{asc}(BN + NB) \notin \{2,3\}$ by Proposition \ref{main2}, a contradiction.
	
	{\bf Case 2.} $x_0$ and $ Ax_0$ are linearly dependent.
	
	Write $ Ax_0 = \gamma x_0 $ for some $\gamma\in\mathbb F$. Since $x_0$ and $Bx_0$ are linearly independent, we may choose $ f_0,  g_0 \in \mathcal X^*$ such that $f_0(x_0) = g_0(Bx_0) = 1$ and $f_0(Bx_0) = g_0(x_0) = 0$. Under the space decomposition $ \mathcal X = [x_0] \oplus [Bx_0] \oplus (\operatorname{ker}(f_0)\cap \operatorname{ker}(g_0) $, $A$  has the   matrix representation
	$ A = \begin{pmatrix}
		\gamma & a_{12} & A_{13} \\
		0 & a_{22} & A_{23} \\
		0 & A_{32} & A_{33}
	\end{pmatrix}.$
	Thus $ \mathcal X^* = [f_0] \oplus [g_0] \oplus [x_0, Bx_0]^{\perp} $ and
	$A^* = \begin{pmatrix}
		\gamma & 0 & 0 \\
		a_{12} & a_{22} & A_{32}^* \\
		A_{13}^* & A_{23}^* & A_{33}^*
	\end{pmatrix}.$
	
	If $ A^* g_0 \neq -\gamma g_0 $, then by taking $ N = x_0 \otimes g_0\in\mathcal N_1(\mathcal X)$, we get  $AN + NA \neq 0 $. By  Proposition \ref{main2},  $\operatorname{asc}(AN + NA)\in \{2,3\}$ and  $\operatorname{asc}(BN + NB) \notin \{2,3\}$, a contradiction.
	
	If  $A^* g_0 = -\gamma g_0 $ and $A_{32}^* \neq 0 $, then there exists some $ h_0 \in [x_0, Bx_0]^{\perp} $ such that $ A_{32}^* h_0 \neq  0$. So
	$A^*h\ne-\gamma h$ with $h=(0\oplus 0\oplus h_0)\in \mathcal X^*$.  Note that $A^*(g_0+h) \neq -\gamma (g_0+h) $, $(g_0+h)(x_0)=(g_0+h)(Ax_0)=0$ and $(g_0+h)(Bx_0)=1$. Now, take $ N = x_0 \otimes (g_0+h)\in\mathcal N_1(\mathcal X)$, and then $AN + NA \neq 0 $. By Proposition  \ref{main2} again,   $\operatorname{asc}(AN + NA)\in \{2,3\}$ and  $\operatorname{asc}(BN + NB) \notin \{2,3\}$, a contradiction.
	
	If $A^* g_0 = -\gamma g_0 $ and $A_{32}^* = 0 $,  we consider  $ A_{33}^* $.
	Let $I' $ be the identity operator on $[x_0, Bx_0]^{\perp}$. If $ A_{33}^* \neq -\gamma I' $, there exists some $ h_1 \in [x_0, Bx_0]^{\perp} $ such that $A_{33}^* h_1 \neq -\gamma h_1 $. Thus $A^* h \neq -\gamma h$ with $h=0\oplus 0\oplus h_1\in\mathcal X^*$. Letting $ N = x_0 \otimes (g_0+h)\in\mathcal N_1(\mathcal X)$, by Proposition \ref{main2}, we have  $\operatorname{asc}(AN + NA)\in \{2,3\}$ and  $\operatorname{asc}(BN + NB) \notin \{2,3\}$, a contradiction.
	If $ A_{33}^* = -\gamma I'$,
	we can choose $ h_2 \in \mathcal{X^*} $ such that $ h_2(x_0) = h_2(Bx_0) = 0$ as the linearly independence of  $ x_0$ and $Bx_0 $. Then $ h_2 \in [x_0, Bx_0]^{\perp}$. It is easily checked that ${A^*}{h_2} =- \gamma{h_2}$. Now, letting $N = x_0 \otimes h_2\in\mathcal N_1(\mathcal X) $, by Proposition \ref{main2},  we still get $\operatorname{asc}(BN + NB) \in \{2,3\}$ and  $\operatorname{asc}(AN + NA)\notin \{2,3\}$,  a contradiction again.
	
	The proof is completed.
\end{proof}

\begin{rem}
	Note that the ascent and the descent of  finite rank operators are the same. So all propositions in Sections 3-4 also hold for the descent.
\end{rem}

\section{An  application:  Characterizing maps preserving  the ascent (descent) of Jordan product of operators}

In this section,  we give an application of the resutls in Sections 2-4 to characterize all  maps that preserve the ascent (descent) of Jordan product on $\mathcal B(\mathcal X)$.

The following is  our main result in this section.

\begin{thm}\label{1}
	Let $\mathcal X$ be a real or complex Banach space with $\dim \mathcal X\geq 3$. Assume that $\phi : \mathcal B(\mathcal X) \rightarrow \mathcal B(\mathcal X)$ is a  map with range containing $\mathcal F_3(\mathcal X)$.
	Then $\phi$ satisfies
	$$ \operatorname{asc}(AB + BA) = \operatorname{asc}(\phi(A) \phi(B) + \phi(B) \phi(A))\quad {\rm for\ all}\ \ A, B \in {\mathcal B}(\mathcal{X}),$$
	if and only if one of the following statements holds:
	
	{\rm (1)} if $ \mathcal X$ is infinite-dimensional, then there exist an invertible bounded linear or conjugate-linear operator $T : \mathcal X \to \mathcal X$ and a map $f : \mathcal B(\mathcal X) \rightarrow\mathbb{F}^* $ such that
	$$\phi(A) = f(A) TAT^{-1}\ \ {\rm for\ all} \ \ A \in \mathcal B(\mathcal X).$$

	{\rm (2)}  if $ \mathcal X$ is finite-dimensional, regarding $\mathcal B(\mathcal X)$ as $\mathcal M_n(\mathbb F)$,  then there exist  an invertible matrix $T\in \mathcal M_n(\mathbb F)$, an automorphism $ \tau:\mathbb{F}\rightarrow\mathbb F $ and a map $f: \mathcal M_n(\mathbb F) \to \mathbb{F}^* $ such that either
	$$\phi(A)= f(A)T[\tau(a_{ij})]T^{-1}\ \ {\rm for\ all}\ \  A=[a_{ij}] \in \mathcal M_n(\mathbb F), $$
	or
	$$\phi(A)= f(A)T[\tau(a_{ij})]^{\operatorname{tr}} T^{-1}\ \ {\rm for\ all}\ \   A=[a_{ij}] \in \mathcal M_n(\mathbb F),$$
	where $A^{\operatorname{tr}}$ denotes the transpose of $A$.
\end{thm}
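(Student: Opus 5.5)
The ``if'' direction is a direct check. If $\phi(A)=f(A)TAT^{-1}$, then
$$\phi(A)\phi(B)+\phi(B)\phi(A)=f(A)f(B)\,T(AB+BA)T^{-1},$$
so Proposition \ref{p1}(1)--(2) give equality of ascents. In the finite-dimensional case, a ring automorphism $\tau$ applied entrywise preserves ranks of powers, hence kernels' dimensions and ascents. Transposition also preserves ascent, since for matrices $\operatorname{asc}=\operatorname{desc}$ and Proposition \ref{p1}(8) applies. In the conjugate-linear case one uses that $T$ is a bijection, so kernels of powers correspond. The substance is the ``only if'' direction, which we organize as a chain of reductions using the characterizations of Section 4.

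\textbf{Step 1: $\phi$ preserves the special classes.} We show $\phi$ maps $0$, $\mathbb F^*I$, $\mathcal F_1(\mathcal X)$ and $\mathcal N_1(\mathcal X)$ into themselves, and that $\phi$ is injective modulo scalars.

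First, take $A$ with $\phi(A)=0$. Then $\operatorname{asc}(AB+BA)=1$ for all $B$. Since the range of $\phi$ contains $\mathcal N_1(\mathcal X)$, and we can choose $B$ with $\phi(B)$ ranging over $\mathcal N_1$, Proposition \ref{0} applied to $\phi(A)$ and $A$ interchanged forces the correspondence $\phi(A)=0\Leftrightarrow A=0$. The delicate point is that preimages of $\mathcal N_1$ under $\phi$ need not themselves be nilpotent.

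To handle this, we use the surjectivity onto $\mathcal F_3(\mathcal X)$ together with Propositions \ref{I} and \ref{1.2}, whose conditions quantify over $T\in\mathcal F_3(\mathcal X)$ or over injective $T$. We first show $\phi(A)\in\mathbb F^*I$ iff $A\in\mathbb F^*I$. The key test is $\operatorname{asc}(\phi(A)\phi(B)+\phi(B)\phi(A))=\operatorname{asc}(2\phi(B)^2)\cdot$ type comparisons with $B=A$, using Proposition \ref{p1}(3) for the invertible $I$.

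Next, we show $\phi(A)\in\mathcal F_1$ iff $A\in\mathcal F_1$ via Proposition \ref{1.2}(3). Rank-one nilpotents are then separated from rank-one non-nilpotents using $\operatorname{asc}(2A^2)$: by Proposition \ref{p1}(7), $\operatorname{asc}(A^2)$ is $1$ or $2$ according as $A\in\mathbb F^*\mathcal P_1$ or $A^2=0$.

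\textbf{Step 2: structure on rank-one nilpotents.} Restricted to $\mathcal N_1(\mathcal X)$, $\phi$ preserves the relation ``$\operatorname{asc}(AN+NA)=2$''. By Proposition \ref{two1}, this is the same as $\operatorname{asc}\in\{2,3\}$. Corollary \ref{de2} then shows $\phi$ induces a well-defined bijection on the projectivization of $\mathcal N_1(\mathcal X)$. Proposition \ref{sim} shows this bijection preserves $\sim$ in both directions, and the equivalence (2)$\Leftrightarrow$(3) there lets us stay inside $\mathcal F_3$, where the range hypothesis applies.

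A map on rank-one nilpotents that preserves $\sim$ and linear independence bijectively is handled by the fundamental theorem of projective geometry, in the standard way of \cite{RH2023}. This gives a semilinear $T$ (or, in finite dimensions, $\tau$-semilinear possibly composed with transposition) such that
$$\phi(x\otimes f)\in\mathbb F^*\,Tx\otimes (T^{-1})^*f \quad\text{or}\quad \phi(x\otimes f)\in\mathbb F^*\,(\text{transpose form}).$$
In infinite dimensions, the transpose form is excluded because it would require $\mathcal X\cong\mathcal X^*$ reflexively in a way incompatible with boundedness. There, $\tau$ must be the identity or conjugation, since continuity of $T$ follows from mapping closed hyperplanes to closed hyperplanes.

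\textbf{Step 3: extension to all of $\mathcal B(\mathcal X)$.} Fix $A$ and compose $\phi$ with the inverse of the standard form, so we may assume $\phi(N)\in\mathbb F^*N$ for every $N\in\mathcal N_1(\mathcal X)$. By Proposition \ref{p1}(1), for all $N\in\mathcal N_1$,
$$\operatorname{asc}(AN+NA)\in\{2,3\}\Leftrightarrow \operatorname{asc}(\phi(A)N+N\phi(A))\in\{2,3\}.$$
For $A\notin\mathbb F I$, Proposition \ref{de1} yields $\phi(A)=\lambda A+\mu I$, and it remains to show $\mu=0$.

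This is exactly what Proposition \ref{al op} is designed for. If $\mu\ne0$ and $\lambda\neq0$, set $\omega=\mu/\lambda$ and apply that proposition. It gives a $T$ of finite rank with $\operatorname{asc}(AT+TA)\ne\operatorname{asc}(AT+TA+\omega T)$. We need $\phi(T)$ to be a scalar multiple of $T$, so we use rank-one $T$ directly. In the degree-$\ne2$ case, we use that $T$ is itself determined up to a scalar by Proposition \ref{de1} plus induction on the already-normalized rank-one part, noting that degree $\neq2$ is what rules out an affine shift for $T$.

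For non-algebraic $A$, Proposition \ref{al O} shows infinite ascent can be probed by $\mathcal F_n$, and we reduce to the same argument. The case $\lambda=0$ is excluded because $\phi(A)$ would be scalar, contradicting Step 1.

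\textbf{Main obstacle.} Step 3 is the hardest part. Ensuring that the test operators $T$ from Proposition \ref{al op} satisfy $\phi(T)\in\mathbb F^*T$ (not merely $\lambda T+\mu' I$) requires a bootstrapping order: first rank-one operators, then finite-rank operators of degree $\ne2$, then general $A$. I would prove the statement for $\mathcal F_3(\mathcal X)$ first, where all test operators live, and then extend to general $A$.
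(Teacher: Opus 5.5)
Your overall architecture matches the paper's. You preserve the special classes, transfer linear dependence and $\sim$ on $\mathcal N_1(\mathcal X)$ via Corollary~\ref{de2} and Proposition~\ref{sim}, obtain a projective-geometry standard form, use Proposition~\ref{de1} to reach $\phi(A)=\lambda A+\mu I$, and then show $\mu=0$. However, three steps fail as written.

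\textbf{Step 1.} The test $B=A$ cannot separate nilpotent from non-nilpotent rank-one operators. For every nonzero rank-one $A$ one has $\operatorname{asc}(A^2)=1$: if $A^2=0$ this is $\operatorname{asc}(0)=1$, and otherwise $A^2=f(x)A$. So $\operatorname{asc}(2A^2)=\operatorname{asc}(2\phi(A)^2)$ carries no information here. More generally, $B=A$ only gives, via Proposition~\ref{p1}(3), that injectivity is preserved in both directions, and that cannot single out $\mathbb F^*I$. The paper uses $B=I$ instead. It feeds $\operatorname{asc}(S)=\operatorname{asc}(\phi(S)\phi(I)+\phi(I)\phi(S))$ into Proposition~\ref{I} to get $\phi(I)\in\mathbb F^*I$; one must still check that the injective test operators used there lie in the range of $\phi$. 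It then gets $\operatorname{asc}(\phi(C))=\operatorname{asc}(C)$ for all $C$, and Proposition~\ref{p1}(7) yields preservation of $\mathcal N_1(\mathcal X)$.

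\textbf{Step 2.} The adjoint form $N\mapsto TN^*T^{-1}$ with $T:\mathcal X^*\to\mathcal X$ cannot be excluded at the level of $\mathcal N_1(\mathcal X)$. On a Hilbert space it is bounded and legitimate. Since $\operatorname{asc}(F)=\operatorname{asc}(F^*)$ for finite-rank $F$, no condition involving only finite-rank operators rules it out. The paper carries this form through Proposition~\ref{de1} to $\phi(A)=f(A)TA^*T^{-1}$ on all of $\mathcal B(\mathcal X)$. Only then does it eliminate it, using $B=I$ and an injective, non-surjective, closed-range $A$, for which $\operatorname{asc}(A)=0\neq\operatorname{asc}(A^*)$.

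\textbf{Step 3.} Here the decisive idea is missing. Proposition~\ref{al op} applies only to algebraic $A$, and Proposition~\ref{al O} gives you nothing for non-algebraic $A$. The paper's device is a direct rank-one test, in two parts.
First, $\phi(x\otimes f)\in\mathbb F^*\,x\otimes f$ for every rank-one operator. This holds because $\lambda\, x\otimes f+\mu I$ must have rank one, which forces $\mu=0$ since $\dim\mathcal X\ge 3$.
Second, take any $A$ with $x_0,Ax_0,A^2x_0$ linearly independent. Choose $f_0$ with $f_0(x_0)\neq0=f_0(Ax_0)=f_0(A^2x_0)$ and set $N=x_0\otimes f_0$. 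Then $\operatorname{asc}(AN+NA)$ equals $2$ or $3$ by Proposition~\ref{main1}. Requiring the same value for $(A+\omega I)N+N(A+\omega I)$ forces $\omega f_0(x_0)=0$, i.e.\ $\omega=0$.

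This handles every non-algebraic $A$ and every algebraic $A$ of degree at least $3$. In particular, it is what gives $\phi(T)\in\mathbb F^*T$ for the degree-$\neq2$ finite-rank test operators of Proposition~\ref{al op}. Your ``induction on the rank-one part'' does not supply this. Those test operators also need not lie in $\mathcal F_3(\mathcal X)$, so proving the result on $\mathcal F_3(\mathcal X)$ first does not cover them. Proposition~\ref{al op} is then needed only when $x,Ax,A^2x$ are dependent for every $x$.

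Finally, if $\phi(A)=\lambda(A+\omega I)$ and $\phi(T)=cT$, the Jordan product is $\lambda c\,(AT+TA+2\omega T)$. So Proposition~\ref{al op} must be applied with $2\mu/\lambda$, not $\mu/\lambda$.
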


\begin{proof}[Proof of Theorem \ref{1}]
	
	For the ``if" part,  by Proposition \ref{p1}(1)-(2),  and  by the multiplicativity  and linearity of $\tau$ for (2), it is easy to check that $\phi$
	preserve the ascent of Jordan product.  
	
	For the ``only if" part, we will prove it  by several claims.
	
	In the sequel, assume that $\phi : \mathcal B(\mathcal X) \rightarrow \mathcal B(\mathcal X)$  is a  map with range containing $\mathcal F_3(\mathcal X)$ satisfying
	\begin{equation} \label{3.1}
		\operatorname{asc}(AB + BA) = \operatorname{asc}(\phi(A) \phi(B) + \phi(B) \phi(A))\quad {\rm for\ all}\ \ A, B \in \mathcal B(\mathcal X).
	\end{equation}

	{\bf Claim 1.}  $\phi(I) \in \mathbb F^* I $.
	
	For any  $S \in \mathcal B(\mathcal X)$,  by Eq.\eqref{3.1}, we have
	$ \operatorname{asc}(S^2) = \operatorname{asc}(\phi(S)^2 )$, which and  Proposition \ref{p1}(3) imply that
	\begin{equation}\label{3.2}
		S\  \mbox{is injective if and only if }\ \phi(S) \mbox{ is injective}.
	\end{equation}
	Consequently, all injective operators belong to the range of $\phi$. Also note that
	\begin{equation}\label{3.3}
		{\rm asc}(S)= \operatorname{asc}(SI + IS) = \operatorname{asc}(\phi(S) \phi(I) + \phi(I) \phi(S)).
	\end{equation}
	So,  Eqs.\eqref{3.2}-\eqref{3.3} imply that
	$$
	\operatorname{asc}(T \phi(I) + \phi(I) T)=0
	$$
	holds for any injective operator $T\in\mathcal B(\mathcal X)$. Then, by Proposition \ref{I}, $\phi(I) \in \mathbb{F}^* I $, as desired.
	
	{\bf Claim 2.} $\phi$ preserves rank-one operators in both directions, and moreover, preserves rank-one nilpotent operators in both directions.
	
	By Eq.\eqref{3.1}, Proposition \ref{1.2},
	$A\in\mathcal F_1(\mathcal X)$ if and only if
	$$ \operatorname{asc}(\phi(A) \phi(B) + \phi(B) \phi(A))=\operatorname{asc}(AB + BA)\in\{1,2,3\}$$
	for all $B \in \mathcal B(\mathcal X)$. This, by the assumption that the range of $\phi$ contains all operators of rank not greater than 3 and applying Proposition \ref{1.2} again, is true if and only if  $\phi(A)$ is of rank one.

	In addition, by  Claim 1,  $\phi(I) \in \mathbb F^* I $, and so
	$$\operatorname{asc}(\phi(C))	
	= \operatorname{asc}(\phi(C)\phi(I) + \phi(I)\phi(C)) = \operatorname{asc}(2C)=\operatorname{asc}(C)\ \ {\rm for\ all }\ C\in\mathcal B(\mathcal X).$$
	Thus, as $\phi$ preserves the rank one in both directions, by Proposition \ref{p1}(7), $\phi$ preserves also the rank-one  nilpotent operators in both directions.
	
	{\bf Claim 3.} $\phi(0)=0$.
	
	By Eq.\eqref{3.1}, we have
	$  \operatorname{asc}(\phi(0)\phi(A) + \phi(A)\phi(0))=\operatorname{asc}(0) = 1$ for all $A \in \mathcal B(\mathcal X)$.  Since $\mathcal N_1(\mathcal X)\subset \mathcal F_3(\mathcal X)$ is contained in the range of $\phi$, applying Proposition \ref{0} gives $\phi(0) = 0$.

	{\bf Claim 4.}  For any  rank-one nilpotent operators $N_1,N_2 \in \mathcal N_1(\mathcal X)$,  $N_1$ and $N_2$ are linearly dependent if and only if $\phi(N_1)$ and $\phi(N_2)$ are linearly dependent.
	
	For any  $N_1, N_2 \in \mathcal N_1(\mathcal X)$.  If $N_1 = \alpha N_2$ for some  $\alpha \in \mathbb F^*$, by Corollary \ref{de2},   one gets
	$$\operatorname{asc}(N_1N+NN_1)=2 \Leftrightarrow \operatorname{asc}(N_2N+NN_2)=2\ \ {\rm for\ all}\ N\in\mathcal N_1(\mathcal X). $$
	By  Claim 2 and Eq.\eqref{3.1}, the above equation implies
	$$\operatorname{asc}(\phi(N_1)N+N\phi(N_1)) =2 \Leftrightarrow \operatorname{asc}(\phi(N_2)N+N\phi(N_2))=2\ \ {\rm for\ all}\ N\in\mathcal N_1(\mathcal X).$$
	Applying Claim 2 and Corollary \ref{de2} again, one obtains  $\phi(N_1) = \beta \phi(N_2)$ for some $\beta \in \mathbb F^*$.
	
	On the other hand, if  $\phi(N_1)$ and $\phi(N_2)$ are linearly dependent, by a similar argument to that of the above, one can show that $N_1$ and $N_2$ are also linearly dependent. So the claim is true.

	{\bf Claim 5.}  For any rank-one nilpotent operators $N_1,N_2 \in \mathcal N_1(\mathcal X)$, $N_1\sim N_2$  if and only if $\phi(N_1)\sim \phi(N_2)$.
	
	By Claim 2, Claim 4 and Proposition \ref{sim}, this claim is obvious.
	
	Now, combining   Claims 1-5 and  \cite[Proposition 2.13]{RH2024}, the following claim holds.
	
	{\bf Claim 6.} 	If $\dim\mathcal X=\infty$, then either there exist an invertible bounded linear or conjugate-linear operator $T: \mathcal{X} \to \mathcal X$ and a map $f: \mathcal N_1(\mathcal X) \to \mathbb F^*$ such that
	\begin{equation}\label{4}	\phi(N) = f(N) TNT^{-1}\ \ {\rm for\ all}\ \ N \in \mathcal N_1(\mathcal X),
	\end{equation}
	or there exists an invertible bounded linear or conjugate-linear operator $T: \mathcal X^* \to \mathcal X$ and  a map $f: \mathcal N_1(\mathcal X) \to \mathbb F^*$ such that
	\begin{equation}\label{5}
		\phi(N) = f(N) TN^*T^{-1}\ \ {\rm for\ all}\ \ N \in \mathcal N_1(\mathcal X),
	\end{equation}
	and in later case, $\mathcal X$ is reflexive;
	
	if $\dim\mathcal X=n<\infty$, regarding $\mathcal B(\mathcal X)$ as $\mathcal M_n(\mathbb F)$,  then there exists  an invertible matrix $T\in \mathcal M_n(\mathbb F)$, an automorphism $ \tau:\mathbb{F}\rightarrow\mathbb F $ and a map $f: \mathcal{N}_1(\mathcal{X}) \to \mathbb{F}^* $ such that either
	\begin{equation}\label{6}
		\phi(A)= f(A)T[\tau(a_{ij})]T^{-1}\ \ {\rm for\ all}\ \  A=[a_{ij}] \in\mathcal N_1(\mathcal{X}),
	\end{equation}
	or
	\begin{equation}\label{7}
		\phi(A)= f(A)T[\tau(a_{ij})]^{\operatorname{tr}} T^{-1}\ \ {\rm for\ all}\ \   A=[a_{ij}] \in \mathcal N_1(\mathcal X).
	\end{equation}

	{\bf Claim 7.}  $\phi$ has the forms in Theorem \ref{1}, and therefore, the theorem holds.
	
	Firstly, assume that Eq.\eqref{4} or Eq.\eqref{5} holds. Without loss of generality, we can assume that either $ \phi(N)= N$ holds for all $N\in\mathcal N_1(\mathcal X)$ or $ \phi(N)= N^*$ holds for all $N\in\mathcal N_1(\mathcal X)$.
	Note that, by Proposition \ref{p1}(5) and (8), one gets
	\begin{equation}\label{f}
		\operatorname{asc}(F)=\operatorname{asc}(F^*) \text{ holds for all finite-rank operators } F \in \mathcal{B}(\mathcal{X}).
	\end{equation}
	So, for any $A\in\mathcal B(\mathcal X)$,  by Eq.\eqref{3.1}, we have $$\
	\operatorname{asc}(AN + NA)
	= \operatorname{asc}(\phi(A)N^\dagger + N^\dagger \phi(A))
	= \operatorname{asc}(\phi(A)^\dagger N + N \phi(A)^\dagger),$$
	where $A^\dagger=A$ or $A^*$. It follows from Proposition \ref{de1} that $\phi(A)^\dagger=\beta_AA +\gamma_AI $, that is,
	\begin{equation}\label{A}
		\phi(A)=\beta_A A^\dagger +\gamma_A I \ \ \mbox{for all}\ \ A\in\mathcal B(\mathcal X),   \ {\rm where} \ \beta_A, \gamma_A \in \mathbb{F}.
	\end{equation}

	Now, take any $A\in\mathcal B(\mathcal X)\setminus \mathbb F I$. By Claim 3,  $\beta_A\not=0$.
	Still, we may suppose $\beta_A=1$.  Since $\phi$ preserves rank-one operators in both directions by Claim 2,  one has
	$$\gamma_{x\otimes f}=0\ \ {\rm for\ all}\ x\otimes f\in\mathcal F_1(\mathcal X),$$
	and so
	\begin{equation}\label{x}
		\phi(x\otimes f)=(x\otimes f)^\dag\ \ {\rm for\ all}\ x\otimes f\in\mathcal F_1(\mathcal X).
	\end{equation}
	
	Next, we will show $\gamma_A=0$ for all $A$ by two cases.

	{\bf Case 1.} There exists  some $x_0 \in \mathcal{X} $ such that $x_0$, $Ax_0$ and $A^2x_0$ are linearly independent.
	
	Since $x_0$, $Ax_0$ and $A^2x_0$ are linearly independent, we may take a functional $f_0 \in \mathcal{X}^*$ such that $f_0(x_0) \neq 0$, $f_0(Ax_0)=0$ and $f_0(A^2x_0) = 0$. By Eq.\eqref{3.1} and  Eqs.\eqref{f}-\eqref{x}, one has
	$$\begin{array}{rl} \operatorname{asc}(Ax_0 \otimes f_0 + x_0 \otimes A^*f_0)
		=& \operatorname{asc}(\phi(A)(x_0 \otimes f_0)^\dag+ (x_0 \otimes f_0)^\dag\phi(A))\\
		=&\operatorname{asc}((A +\gamma_A I))(x_0 \otimes f_0) + (x_0 \otimes f_0)((A +\gamma_A I))\end{array}$$
	
	If $A^*f_0=\lambda f_0 $ for some scalar $\lambda \in \mathbb{F}$,  then $0=f_0(Ax_0)=\lambda f_0(x_0)$. So
	$A^*f_0=0$. Thus,  the above equation implies  $$2=\operatorname{asc}(Ax_0 \otimes f_0)=\operatorname{asc}((Ax_0 +2\gamma_Ax_0 )\otimes f_0).$$
	It follows from  Proposition \ref{p1}(7) that $\gamma_A=0$.

	If $A^*f_0$ and $f_0 $ are linearly independent, then $\operatorname{rank}(Ax_0 \otimes f_0 + x_0 \otimes A^*f_0)=2$. By Proposition \ref{main1}, one gets
	$$3=\operatorname{asc}(Ax_0 \otimes f_0 + x_0 \otimes A^*f_0)=\operatorname{asc}((A +\gamma_A I))(x_0 \otimes f_0) + (x_0 \otimes f_0)((A +\gamma_A I)).$$
	Applying  Proposition \ref{main1} again, it follows that
	$f_0(A x_0+\gamma_A x_0)=0$, and so $\gamma_A=0$.
	
	{\bf Case 2.} For all $x \in \mathcal{X} $, $x$, $Ax$ and $A^2x$ are linearly dependent.
	
	In this case, by \cite[Kaplansky's Theorem]{HR1973},  $A$ is an algebraic operator. Take any algebraic operator $T\in  \mathcal{B}(\mathcal{X})$ with degree not two and $\operatorname{rank} <\infty$. By  Case 1, we have $\phi(T)=T^\dag$. Thus, by Eq.\eqref{3.1} and Eqs.\eqref{f}-\eqref{A}, one achieves
	$$ \operatorname{asc}(AT + TA)= \operatorname{asc}(AT + TA+2\gamma_AT).$$
	It follows from Proposition \ref{al op} that  $\gamma_A= 0$.
	
	Thus, we obtain that either there exist an invertible bounded linear or conjugate-linear operator $T : \mathcal X \to \mathcal X$ and a map $f : \mathcal B(\mathcal X) \rightarrow\mathbb{F}^* $ such that
	$$\phi(A) = f(A) TAT^{-1}\ \ {\rm for\ all} \ \ A \in \mathcal B(\mathcal X), $$
	or $\mathcal X$ is reflexive, and there exist an invertible bounded linear or conjugate-linear operator $T : \mathcal{X}^* \to \mathcal{X}$ and a map $f : \mathcal B(\mathcal X) \to \mathbb{F}^*$ such that
	$$\phi(A) = f(A)TA^*T^{-1}  \ \ {\rm for\ all}\ \   A \in \mathcal B(\mathcal X).$$ 
	
	Suppose that  the later case holds. We take an operator $A \in \mathcal B(\mathcal X)$  which is injective, has a closed range, and fails to be surjective. A straightforward verification shows that $\operatorname{asc}(AI+IA)=\operatorname{asc}(A)=0 $ but  $\operatorname{asc}(\phi(A)\phi(I)+)\phi(I)\phi(A))=\operatorname{asc}(A^*) \not=0$, a contradiction.
	
	Thus  the statement (1) holds in this case.
	
	Next, assume that Eq.\eqref{6} or Eq.\eqref{7} holds. Still, we can assume that either $ \phi(N)= [\tau(n_{ij})]$ holds for all $N=[n_{ij}]\in\mathcal N_1(\mathcal X)$ or $ \phi(N)= [\tau(n_{ij})]^{\operatorname{tr}}$ holds for all $N=[n_{ij}]\in\mathcal N_1(\mathcal X)$. Note that $[\tau(n_{ij})] \in\mathcal N_1(\mathcal X) $ and  $\operatorname{asc}([a_{ij}])=\operatorname{asc}([\tau(a_{ij})]) $ holds for all $A=[a_{ij}] \in \mathcal M_n(\mathbb{F})$.
	So, for any $A=[a_{ij}]\in\mathcal M_n(\mathbb{F})$,  by Eq.\eqref{3.1}, one can obtain
	$$\operatorname{asc}([\tau(a_{ij})][\tau(n_{ij})]+[\tau(n_{ij})][\tau(a_{ij})])
	=  \operatorname{asc}(\phi(A)^\dagger [\tau(n_{ij})] + [\tau(n_{ij})] \phi(A)^\dagger),$$
	which and Proposition \ref{de1} imply  $\phi(A)^\dagger=\beta_A[\tau(a_{ij})] +\gamma_AI $, that is,
	\begin{equation*}\label{8}
		\phi(A)=\beta_A A^\dagger +\gamma_A I \ \ \mbox{for all}\ \ A\in\mathcal M_n(\mathbb{F}),   \ {\rm where} \ \beta_A, \gamma_A \in \mathbb{F}.
	\end{equation*}
	Now, by a simiar argument to that of the infinite-dimensional case, one can show  $\gamma_A=0$. Hence the statement (2) holds in this case.
	
	The proof of Theorem \ref{1} is completed.

		\[
	T = 
	\begin{cases}
		\operatorname{diag}(0,1,2,3), & \text{if } A = J_4(\lambda), \\[1ex]
		\begin{cases}
			E_{24}+E_{33}+E_{41}, & \text{if } \lambda = \mu,\\
			E_{14}+E_{22}+s^2E_{34}-sE_{24}, & \text{if } \lambda \neq \mu,
		\end{cases} & \text{if } A = \operatorname{diag}\{J_3(\lambda), \mu\}, \\[1ex]
		\begin{cases}
			E_{22}+E_{23}+E_{44}, & \text{if } \lambda \neq \mu ,\\
			E_{23}+E_{42}, & \text{if } \lambda = \mu,
		\end{cases} & \text{if } A = \operatorname{diag}\{J_2(\lambda),J_2(\mu)\}, \\[1ex]
		\begin{cases}
			E_{22}+E_{23}+E_{34}, & \text{if } \xi_1 \neq \xi_2,\ \xi_2 \neq \xi_3,\\
			E_{14}+E_{31}+E_{43}, & \text{if } \xi_1 = \xi_2,\ \xi_2 \neq \xi_3,\\
			E_{11}+E_{23}-E_{42}+(\xi_3-\xi_1)E_{41}, & \text{if } \xi_1 \neq \xi_2,\ \xi_2 = \xi_3,
		\end{cases} & \text{if } A = \operatorname{diag}\{J_2(\xi_1),\xi_2,\xi_3\}, \\[1ex]
		\begin{cases}
			E_{22}+E_{23}+E_{34}, & \text{if } \xi_4 = \xi_3,\\
			E_{14}+E_{31}+E_{43}, & \text{if } \xi_4 \neq \xi_3,
		\end{cases} & \text{if } A = \operatorname{diag}(\xi_1,\xi_2,\xi_3,\xi_4).
	\end{cases}
	\]
	
	It follows from Proposition \ref{} that $\operatorname{asc}([A,T]) = 4$, a contradiction.

	{\bf Case 2.} $\dim \mathcal X \geq 4$.
	
	{\bf Subcase 2.1.} $\mathbb F=\mathbb C$.

	Let $p(\lambda)=\prod_{i=1}^{n}(\lambda-\lambda_i)^{\alpha_i}$ be the minimal polynomial
	of $A$.  By \cite{VM2007} and \cite[Theorem V.11.3]{DCL1980}, one gets $$\mathcal X = \bigoplus_{i=1}^{n} \ker((A-\lambda_i)^{\alpha_i})$$
	and $A$ is similar to a direct sum of Jordan blocks
	$J_{r_{ij}}(\lambda_i)$ ($1\le j\le k_i$), where $k_i$ is the number of blocks for the
	eigenvalue $\lambda_i$ and $r_{ij}$ are their sizes.
	
	Thus, the operator $A$ can always be represented as an upper triangular matrix whose (1,1)-block consists of Jordan blocks. We shall take  a matrix $T=
	\begin{pmatrix}
		T_1 & 0 \\
		0 & 0 
	\end{pmatrix}$ of rank at most 3 such that the (1,1)-entry of $[A,T]$ is a nilpotent matrix with nilpotency index 4. It follows from  Proposition \ref{} $\operatorname{asc}([A,T]) \geq 4$.

	Firstly, $A$ does not contain $J_{ij}(\lambda_i)$ with $r_{ij}\geq4$, otherwise, we can without loss of generalit assume that $A=J_{r_{11}}(\lambda_1) \oplus B$ and take $T = \operatorname{diag}(0,1,2,3.0 00) \oplus 0$.
	Then $[A,T]=\operatorname{diag}\{J_4(0),0..0\}\oplus 0$. It follows from Proposition \ref{} that $\operatorname{asc}([A,T]) = 4$, a contradiction. 
	
	In the squel, we shall discuss several subcases
	based on the  largest Jordan block of (1,1)-entry $J_{ij}(\lambda_i)$ of $A$. For convenience, let $A_1$ denote the (1,1)-block of $A$.
	
	{\bf Subcase 2.1.1. }   $\max\{r_{ij}\} \geq 4$.  
	
	By Eq.\ref{S},	we may without loss of generalit assume that $r_{11} \geq 4$. Thus, $A$  can be written as
	$$A=J_{r_{11}}(\lambda_1) \oplus A_1= \begin{pmatrix}
		J_4(\lambda_1)&A_{12}\\
		0&A_{22}
	\end{pmatrix},$$
	where  $A_1$ is the direct sum of all remaining Jordan blocks of $A$.
	
	Take $T = T_{11} \oplus 0$ with $T_{11}=\operatorname{diag}(0,1,2,3)$, and then
	$[J_4(\lambda_1),T_{11}]=J_4(0)\oplus 0$. It follows from Proposition \ref{} that $\operatorname{asc}([A,T]) \geq \operatorname{asc}(J_4(0)) = 4$, a contradiction.
	
	{\bf Subcase 2.1.2. }  $\max\{r_{ij}\} =3$.  
	
	Still, we may assume that
	$$A=J_3( \lambda_1) \oplus A_1= \begin{pmatrix}
		A_{11}&A_{12}\\
		0&A_{22}
	\end{pmatrix},$$
	where $A_{11}=\operatorname{diag}\{J_3( \lambda_1), \mu\}\in\mathcal M_4(\mathbb C)$ with $ \mu \in \sigma(A) $.
	
	Next,  for convenience,  if $\lambda_1 \not= \mu$,  set $\lambda_1 - \mu=s$.
	By taking	$$T=T_1 \oplus 0  \text{ with }
	T_1=\begin{cases}  E_{24}+E_{33}+E_{41} \ {\rm if}\  \lambda_1 = \mu; \\
		E_{14}+E_{22}+s^2E_{34}-sE_{24}	\ {\rm if}\ \lambda_1 \not= \mu,\end{cases}$$
	one gets  $[A_{11},T_1]$ is $E_{14}+E_{23}-E_{42} $ or $E_{12}-E_{23}+s^3E_{34} $.
	By Proposition \ref{p1}(6) and  Proposition \ref{op ma} again, one gets   $\operatorname{asc}([A,T]) \geq \operatorname{asc}([A_{11},T_1]) = 4$.

	{\bf Subcase 2.1.3.} $r_{ij}<3$ for all $i,j$.

	If there exist at least two pairs $(i,j)$ and $(i',j')$ such that  $r_{ij}=r_{i'j'}=2$, then
	$A$ at least contains two  $2 \times 2 $ Jordan blocks. Reordering the Jordan blocks in $A$,
	we may write
	$$A=\operatorname{diag}\{J_2(\mu_1),J_2(\mu_2)\} \oplus A_1, \ \ {\rm with} \ \ \mu_1, \mu_2\in \sigma(A).$$
	By taking
	$$T=T_1 \oplus 0  \text{ with }
	T_1=\begin{cases}
		E_{22}+E_{23}+E_{44}  &{\rm if}\ \mu_1 \ne \mu_2 ; \\
		E_{23}+E_{42} &{\rm if}\ \mu_1 = \mu_2,
	\end{cases}$$
	a direct calculation yields  $[\operatorname{diag}\{J_2(\mu_1),J_2(\mu_2)\},T_1]$ is $J_4(0)+E_{13}+(\mu_1-\mu_2-1)E_{23}-E_{24}$ or $E_{13}-E_{24}+E_{32}$.
	It follows that $[A,T]$ is a $4$-nilpotent operator. So $\operatorname{asc}([A,T]) =4$, a contradiction.
	
	If   $r_{ij}=2$ for  exactly one pair  $(i,j)$ and $r_{kl}=1$ for all other pairs  $(k,l)$, then $A$ is composed of one $2 \times 2 $ Jordan block and some $1 \times 1 $ Jordan blocks. Still, we may write
	$$A= \operatorname{diag}\{J_2(\xi_1),\xi_2,\xi_3\} \oplus A_2 \ \ {\rm with} \ \ \xi_1, \xi_2, \xi_3\in \sigma(A),$$
	where $A_2$ is a diagonal operator.

	If $\xi_1, \xi_2, \xi_3 $ are not all equal, by taking
	$$T=T_2 \oplus 0  \text{ with }
	T_2=\begin{cases}
		E_{22}+E_{23}+E_{34} &{\rm if}\ \xi_1 \ne \xi_2, \xi_2 \ne \xi_3  ; \\
		E_{14}+E_{31}+E_{43} &{\rm if}\ \xi_1 = \xi_2, \xi_2 \ne \xi_3 ; \\
		E_{11}+E_{23}-E_{42}+(\xi_3-\xi_1)E_{41} &{\rm if}\ \xi_1 \ne \xi_2, \xi_2 = \xi_3,
	\end{cases}$$
	one can easily check that $[\operatorname{diag}\{J_2(\xi_1),\xi_2,\xi_3\},T_2]$ is one of $$\begin{pmatrix}
		0 & 1 & 1& 0  \\
		0 & 0 & \xi_1-\xi_2 & 0  \\
		0 & 0 & 0 & \xi_2-\xi_3   \\
		0 & 0 & 0 & 0  \\
	\end{pmatrix}, \begin{pmatrix}
		0 & 0 & 0&\xi_1-\xi_3   \\
		0 & 0 & 0& 0  \\
		0 & 1 & 0 & 0   \\
		0 & 0 & \xi_3-\xi_2  & 0  \\
	\end{pmatrix}  \text{ and } \begin{pmatrix}
		0 & -1 & 1& 0  \\
		0 & 0 & \xi_1-\xi_2 & 0  \\
		0 & 0 & 0 & 0   \\
		(\xi_1-\xi_3)^2 & 0 & 0 & 0  \\
	\end{pmatrix},$$  which implies  $\operatorname{asc}([A,T]) =4 $, a contradiction.
	
	If $\xi_1= \xi_2=\xi_3$, then either $A \in \mathcal F_1(\mathcal X)+\mathbb FI$, or $A$ can be rewritten as
	$ A=\operatorname{diag}\{J_2(\xi_1),\xi_1,\xi'\} \oplus A_2'$ for some $\xi' \not=\xi_1$. For the second form,  by the above discussion, there also exists some $T \in \mathcal B(\mathcal X)$ such that $\operatorname {asc}([A,T]) =4$, a contradiciton again.
	
	Finally, if $r_{ij}=1$ for all pairs $(i,j)$, then  $A$ is a diagonal operator and $p(\lambda) = \prod_{i=1}^{n} (\lambda - \lambda_i)$.
	
	If $n \geq 3$, then we may  without loss of generality assume that
	$$A=\operatorname{diag}(\lambda_1,\lambda_2, \lambda_3, \mu) \oplus A_3\ \ {\rm with}\ \ \mu \in \sigma(A).$$
	
	by taking
	$$T=T_3 \oplus 0  \text{ with }
	T_3=\begin{cases}
		E_{22}+E_{23}+E_{34} &{\rm if}\ \mu=\lambda_3  ; \\
		E_{14}+E_{31}+E_{43} &{\rm if}\ \mu\not=\lambda_3,
	\end{cases}$$

	If $n \leq 2$, let $p$ and $q$ denote the multiplicities of $\lambda_1$  and  $\lambda_2$, respectively. Note that since $A \notin \mathbb FI$, we have $ 0 < p,q < \dim \mathcal X $. Moreover, $A \in \mathcal F_1(\mathcal X)+\mathbb FI$ if $q = 1$ or $p =1$. For $p \geq2$ and $q \geq2$,  we may   rewrite
	$$A=\operatorname{diag}(\lambda_1,\lambda_1, \lambda_2, \lambda_2) \oplus A_4.$$
	
	Take $T =(E_{13}+E_{24}+E_{41})\oplus 0$, and then $AT - TA=(\lambda_1-\lambda_2)(E_{13}+E_{24}+E_{41})\oplus 0$. It follows from Proposition \ref{p1} that $\operatorname{asc}(AT - TA) = 4$, a contradiction.
	
\end{proof}

\begin{rem}
	By a similar argument, one can show that  Theorem 5.1 is still true when the condition
	$$ \operatorname{asc}(AB + BA) = \operatorname{asc}(\phi(A) \phi(B) + \phi(B) \phi(A))\quad {\rm for\ all}\ \ A, B \in {\mathcal B}(\mathcal{X})$$
	is replaced by
	$$ \operatorname{desc}(AB + BA) = \operatorname{desc}(\phi(A) \phi(B) + \phi(B) \phi(A))\quad {\rm for\ all}\ \ A, B \in {\mathcal B}(\mathcal{X}).$$
\end{rem}

\end{document}